\newtheorem{theorem}{Theorem}
\newtheorem{corollary}[theorem]{Corollary}
\newtheorem{lemma}[theorem]{Lemma}
\newtheorem{fact}[theorem]{Fact}
\newtheorem{proposition}[theorem]{Proposition}
\newtheorem{remark}{Remark}
\newtheorem{claim}{Claim}
\newproof{proof}{Proof}
\newproof{poc}{Proof of Claim}
\newcommand{\N}{\mathbb N}
\newcommand{\Z}{\mathbb Z}
\newcommand{\G}{\mathbb G}
\newcommand{\V}{\mathbb V}
\newcommand{\A}{\mathbb A}
\newcommand{\F}{\mathbb F}
\newcommand{\K}{\mathbb K}
\newcommand{\R}{\mathbb R}
\newcommand{\Pp}{\mathbb P}
\newcommand{\defineAs}{=}
\newcommand{\threshold}{4r-8}
\newcommand{\fp}{\F_{\hskip-0.7mm p}}
\newcommand{\fq}{\F_{\hskip-0.7mm q}}
\newcommand{\fqk}{\F_{\hskip-0.7mm q^k}}
\newcommand{\fqd}{\F_{\hskip-0.7mm q^d}}
\newcommand{\fql}{\F_{\hskip-0.7mm q^\ell}}
\newcommand{\fqll}{\F_{\hskip-0.7mm q^l}}
\newcommand{\cfq}{\overline{\F}_{\hskip-0.7mm q}}
\def\ifm#1#2{\relax \ifmmode#1\else#2\fi}
\newcommand{\klk}    {\ifm {,\ldots,} {$,\ldots,$}}
\newcommand{\plp}    {\ifm {+\cdots+} {$+\ldots+$}}
\journal{Journal of Number Theory}
\begin{document}
\begin{frontmatter}
\title{The
  number of reducible space curves over a finite field}%
  \author[UNGS,conicet]{Eda Cesaratto\corref{cor1}\fnref{fn1}}
\ead{ecesarat@ungs.edu.ar}

\cortext[cor1]{Corresponding author}

 \author[Bit.uni]{Joachim von zur Gathen\fnref{fn1}}
\ead{gathen@bit.uni-bonn.de}

\author[UNGS,conicet]{Guillermo Matera\fnref{fn1}}%
\ead{gmatera@ungs.edu.ar}
\address[UNGS]{Instituto del Desarrollo Humano, Universidad Nacional
  de Gene\-ral Sarmiento,\\ J.M. Guti\'errez 1150 (B1613GSX) Los
  Polvorines, Buenos Aires, Argentina}
  \address[conicet]{National
  Council of Science and Technology (CONICET), Ar\-gentina}

\address[Bit.uni]{B-IT, Universit\"at Bonn, D-53113 Bonn, Germany}

\fntext[fn1]{Joachim von zur Gathen was supported by the B-IT
Foundation and the Land Nordrhein-Westfalen.\\ Eda Cesaratto and
Guillermo Matera were partially supported by
  grant PIP 11220090100421
  CONICET.}%
\begin{keyword}
Finite fields; rational points; algebraic curves; asymptotic
behavior; Chow variety; irreducibility; absolute irreducibility.
\end{keyword}
%

\begin{abstract}
  ``Most'' hypersurfaces in projective space are irreducible, and
  rather precise estimates are known for the probability that a
  random hypersurface over a finite field is reducible.  This paper
  considers the parametrization of space curves by the appropriate
  Chow variety, and provides bounds on the probability that a random
  curve over a finite field is reducible.
\end{abstract}
\end{frontmatter}
%
%
\section{Introduction}\label{sec:intro}

The Prime Number Theorem and a well-known result of Gau\ss ~describe
the density of primes and of irreducible univariate polynomials over a
finite field, respectively. ``Most'' numbers are composite, and
``most'' polynomials reducible. The latter changes drastically for two
are more variables, where ``most'' polynomials are irreducible.

Approximations to the number of reducible multivariate polynomials go
back to Leonard Carlitz and Stephen Cohen in the 1960s. This question
was recently taken up by Bodin \cite{Bodin08} 
and Hou and Mullen \cite{HoMu09}. The sharpest bounds are in 
von zur Gathen \cite{Gathen08} for bivariate and von zur
Gathen {\em et al.} \cite{GaViZi10}
for multivariate polynomials.

From a geometric perspective, these results say that almost all
hypersurfaces are irreducible, and provide approximations to the
number of reducible ones, over a finite field. Can we say something
similar for other types of varieties?

This paper gives an affirmative answer for curves in $\Pp^{r}$ for
arbitrary $r$. A first question is how to parametrize the curves.
Moduli spaces only include irreducible curves, and systems of
defining equations do not work except for complete intersections.
The natural parametrization is by the Chow variety
$\mathcal{C}_{d,r}$ of curves of degree $d$ in $\mathbb{P}^{r}$, for
some fixed $d$ and $r$. The foundation of our work are the results
by Eisenbud and Harris \cite{EiHa92}, who identified the irreducible
components of $\mathcal{C}_{d,r}$ of maximal dimension. It turns out
that there is a threshold $d_{0}(r)= 4r-8$ so that for $d \geq
d_{0}(r)$, most curves are irreducible, and for $d < d_{0}(r)$, most
are reducible. This assumes $r \geq 3$; the planar case $r=2$ is
solved in the papers cited above, and single lines, with $d=1$, are
a natural exception.

Over a finite field, we obtain the following bounds for curves chosen
uniformly at random from $\mathcal{C}_{d,r}$. For $d \geq d_{0}(r)$,
Theorem \ref{coro:upper-bound-prob-red-curves} provides upper and
lower bounds on the probability that the curve is reducible over
$\mathbb{F}_{q}$. For $d \geq 6r-12$, Corollary
\ref{coro:upper-bound-prob-rel-red-curves} does so for the probability
that the curve is relatively irreducible over $\mathbb{F}_{q}$, that
is, irreducible over $\mathbb{F}_{q}$ and absolutely reducible. For
any $d$ and $r$ as above, both bounds tend to zero with growing
$q$. In fact, the rate of convergence in terms of $q$ is the same in
the upper and lower bounds, with (different) coefficients depending
only on $d$ and $r$.  Furthermore, we prove an ``average-case Weil
bound'', estimating the absolute difference between $q+1$ and the
expected number of $\mathbb{F}_{q}$-points on a curve defined over
$\mathbb{F}_{q}$.

All our estimates are explicit, without unspecified constants. The
main technical tools are B\'ezout type estimates of the degrees of
certain varieties, such as the incidence correspondence expressing
that a curve in $\mathcal{C}_{d,r}$ is contained in the variety
defined by a system of equations.

The structure of the paper is as follows. Section \ref{sec:notions}
introduces basic notations and facts, mainly concerning the B\'ezout
inequality and Chow varieties. Section \ref{sec:codimension}
determines the codimension of the set of reducible curves, for $d \geq
d_{0}(r)$. This is mainly based on \cite{EiHa92}. Section
\ref{sec:probability-curve-reducible} bounds, in several steps, the
degree of the Chow variety. These estimates form the technical core of
this paper. Section \ref{sec:RedCyc} draws the conclusions for the
probability of having a reducible curve, and Section \ref{sec:AbsRed}
applies our technology to relatively irreducible curves. The final
Section \ref{sec:Weil} yields an average Weil estimate.
%
%
\section{Notions and notations}\label{sec:notions}
Let $\fq$ be a finite field of $q=p^m$ elements, where $p$ is a
prime number, let $\cfq$ be an algebraic closure, and let
$\Pp^r\defineAs \Pp^r(\cfq)$ denote the $r$-dimensional projective
space over $\cfq$. Let $\Pp^{r*}$ denote the dual projective space
of $\Pp^r$, that is, $\Pp^{r*}\defineAs \Pp((\cfq{\!}^{r+1})^*)$.
Let $\G(k,r)$ denote the Grassmanian of $k$-dimensional linear
spaces ($k$-planes for short) in $\Pp^r$. We shall also denote by
$\A^{r+1}\defineAs \A^{r+1}(\cfq)$ the affine $(r+1)$--dimensional
space.

Let $\K$ be a subfield of $\cfq$ containing $\fq$, and let
${\K}[X_0\klk X_r]$ denote the ring of $(r+1)$--variate polynomials
in indeterminates $X_0\klk X_r$ and coefficients in $\K$. Let $V$ be
a $\K$--definable projective subvariety of $\Pp^r$ (a $\K$--variety
for short), namely the set of common zeros in $\Pp^r$ of a finite
set of homogeneous polynomials of $\K[X_0\klk X_r]$. For homogeneous
polynomials $f_1\klk f_s\in\K[X_0\klk X_r]$, we shall use the
notations $V(f_1,\dots,f_s)$ or $\{f_1=0,\dots,f_s=0\}$ to denote
the $\K$--variety $V$ defined by $f_1,\dots,f_s$. We shall denote by
$I(V)\subset {\K}[X_0\klk X_r]$ its defining ideal and by ${\K}[V]$
its coordinate ring, namely the quotient ring ${\K}[V]\defineAs
{\K}[X_0\klk X_r]/I(V)$. For any $d\ge 0$ we shall denote by
$({\K}[V])_d$ the $d$th graded homogeneous piece of the grading of
the coordinate ring ${\K}[V]$ induced by the canonical grading of
${\K}[X_0\klk X_r]$.
%
%
\subsection{Degree and B\'ezout type inequalities}
For an irreducible variety $V\subset \Pp^r$, we define its {\em
degree} $\deg V$ as the maximum number of points lying in the
intersection of $V$ with a linear variety $L\subset \Pp^r$ of
codimension $\dim V$ for which $\#(V\cap L)$ is finite. More
generally, if $V=C_1\cup\cdots\cup C_N$ is the decomposition of $V$
into irreducible components, we define the degree of $V$ as $\deg
V\defineAs \sum_{i=1}^N\deg C_i$ (cf. \cite{Heintz83}).

An important tool for our estimates is the 
\emph{B\'ezout
  inequality} (see \cite{Heintz83}, \cite{Fulton84},
\cite{Vogel84}, \cite{Catanese92}): if $V$ and $W$ are subvarieties
of $\Pp^r$, then the following inequality holds:
\begin{equation}
  \label{eq:Bezout}
  \deg (V\cap W)\le \deg V \cdot \deg W.
\end{equation}
The following inequality of B\'ezout type will also be useful (see
\cite[Proposition 2.3]{HeSc82}): if $V_1\klk V_s$ are subvarieties
of $\Pp^r$, then
\begin{equation}\label{eq:inequality-heintz-schnorr}
  \deg(V_1\cap\cdots\cap V_s)\le \deg V_1\,\big(\max_{2\le i\le s}\deg
  V_i\big){}^{\dim V_1}\ .
\end{equation}
We mention another variant of (\ref{eq:inequality-heintz-schnorr})
and \cite[Lemma 1.28]{Catanese92}, adapted to our purposes.
\begin{lemma}\label{lemma:desigualdad-bezout-andreotti}
  Let $U$ be an open subset of $\Pp^r$, let $V\defineAs V(f_1\klk
  f_m)$ be a subvariety of $\Pp^r$ defined by homogeneous polynomials
  of degree $d$ and let $V_s$ denote the union of the irreducible
  components of $V$ of codimension at most $s$, then
  \begin{equation}\label{eq:inequality-bezout-andreotti}
    \deg(U\cap V_s)\le d^s.
  \end{equation}
\end{lemma}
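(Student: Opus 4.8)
My plan is to relax the $m$ defining equations to only $s$ of them, chosen generically, and then apply the B\'ezout inequality \eqref{eq:Bezout} exactly $s$ times. Since $\cfq$ is infinite, I first pick $s$ generic $\cfq$--linear combinations $g_1\defineAs\sum_{i=1}^m\lambda_{1i}f_i\klk g_s\defineAs\sum_{i=1}^m\lambda_{si}f_i$, each of which is again homogeneous of degree $d$. Because every $g_j$ belongs to the ideal generated by $f_1\klk f_m$, we have $V\subseteq W\defineAs V(g_1\klk g_s)$, and in particular $V_s\subseteq W$.

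The crux is to show that, for a generic choice of the coefficients $\lambda_{ji}$, every irreducible component $C$ of $V$ of codimension $c\le s$ is also an irreducible component of $W$. Fix such a $C$ and localize at its generic point: the local ring $\mathcal O_{\Pp^r,C}$ is regular of dimension $c$, and since $C$ is an isolated component of $V=V(f_1\klk f_m)$, the images of $f_1\klk f_m$ generate an ideal primary to its maximal ideal. Over the infinite field $\cfq$, generic linear combinations of such generators form a system of parameters, so $g_1\klk g_c$ cut out $C$ as an isolated component of $V(g_1\klk g_c)$. Adding the remaining $g_{c+1}\klk g_s$ only shrinks the variety, and from $C\subseteq W\subseteq V(g_1\klk g_c)$ together with the maximality of $C$ among the irreducible subvarieties of $V(g_1\klk g_c)$ it follows that $C$ is a component of $W$. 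As $V$ has only finitely many components of codimension at most $s$, a single generic choice of the $\lambda_{ji}$ serves all of them simultaneously. Hence $V_s$ is a union of components of $W$, so $\deg V_s\le\deg W$.

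It then remains to bound $\deg W=\deg V(g_1\klk g_s)\le d^s$, which I would prove by induction on $s$ from \eqref{eq:Bezout}. The case $s=0$ is $W=\Pp^r$, of degree $1=d^0$. For the inductive step, write $Y\defineAs V(g_1\klk g_{s-1})$, so that $\deg Y\le d^{s-1}$ by the inductive hypothesis and $W=Y\cap V(g_s)$; since $V(g_s)$ is a hypersurface of degree at most $d$, \eqref{eq:Bezout} yields $\deg W\le\deg Y\cdot d\le d^s$. Finally, the closure of $U\cap V_s$ is the union of those components of $V_s$ that meet $U$, so $\deg(U\cap V_s)\le\deg V_s\le\deg W\le d^s$, as claimed.

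The main difficulty is the persistence claim of the second paragraph: discarding equations enlarges the variety, and a priori this could merge a low-codimension component of $V$ into a strictly larger component of $W$. Ruling this out is exactly what the local system-of-parameters argument achieves. Some care is also needed to check that constant coefficients in $\cfq$, rather than coefficients in the residue field of $C$, already suffice for the genericity; this holds because the set of good coefficient vectors is a nonempty Zariski--open subset of the coefficient space defined over $\cfq$, and $\cfq$ is infinite, so a single choice works for all the finitely many components at once.
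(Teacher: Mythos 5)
Your proposal is correct and follows essentially the same route as the paper: both replace the $m$ defining polynomials by $s$ generic degree-$d$ linear combinations $g_1,\ldots,g_s$, argue that every component of $V$ of codimension at most $s$ persists as an irreducible component of $V(g_1,\ldots,g_s)$, and then apply the B\'ezout inequality \eqref{eq:Bezout} iteratively to get $d^s$. The only difference is in how the persistence step is certified --- the paper builds the $g_j$ inductively by choosing them to be nonzero at finitely many points picked on the components not contained in $V$, whereas you localize at the generic point of each component and invoke the system-of-parameters/prime-avoidance argument over the infinite field $\cfq$ --- but these are two standard phrasings of the same genericity fact.
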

\begin{proof}
  Fix arbitrarily a point $\boldsymbol{x}\in \Pp^r\setminus V$. Then we may choose $a_{1,1}\klk a_{1,m}\in\cfq$ with
  $\sum_{j=1}^ma_{1,j}f_j(\boldsymbol{x})\not=0$. Setting $g_1\defineAs
  \sum_{j=1}^ma_{1,j}f_j$, we have that $\{g_1=0\}$ is an
  equidimensional projective variety of dimension $r-1$ containing
  $V$.

  Consider now the decomposition $\{g_1=0\}=\cup_{i=1}^nC_i$ of
  $\{g_1=0\}$ into irreducible components. Suppose that ${C}_i$ is not
  contained in $V$ for $1\le i\le n_1$ and it is contained in $V$ (and
  hence it is a component of $V$) for $n_1+1\le i\le n$. Then there
  exist $\boldsymbol{x}^{(2,i)}\in {C}_i\setminus V$ for $1\le i\le n_1$, and
  $a_{2,1}\klk a_{2,m}\in\cfq$ such that no point $\boldsymbol{x}^{(2,i)}$ is a
  zero of the polynomial $g_2\defineAs
  \sum_{j=1}^ma_{2,j}f_j$. Observe that $\{g_1=0,g_2=0\}$ contains all
  the components of $V$ of codimension at most 2 among its irreducible
  components.

  Arguing inductively, we see that there exist homogeneous
  polynomials $g_1\klk g_{s}\in\cfq[X_0\klk X_r]$ of degree $d$ with
  the following property: all the irreducible components of $V$ of
  codimension at most $s$ are irreducible components of $\{g_1=0\klk
  g_s=0\}$. In particular, all the irreducible components of $V_s$ are
  irreducible components of $\{g_1=0\klk g_s=0\}$. By the definition
  of degree and the B\'ezout inequality
  (\ref{eq:Bezout}) it follows that $\deg(U\cap V_s)\le d^s$, which
  finishes the proof of the lemma.
\qed\end{proof}
Finally, we shall also use the following well-known inequality, which
is proved here for lack of a suitable reference.
\begin{lemma}\label{lemma:bound-deg-morphism}
  Let $V$ be a projective subvariety of $\Pp^r$ and let $F\defineAs
  (f_0\klk f_s):V\to\Pp^s$ be a regular mapping defined by homogeneous
  polynomials of degree $d$. If $m$ denotes the dimension of $F(V)$,
  then $\deg F(V)\le \deg V\cdot d^m$.
\end{lemma}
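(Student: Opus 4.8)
The plan is to compare $F(V)$ and $V$ by slicing both with preimages of hyperplanes and applying the B\'ezout inequality (\ref{eq:Bezout}). First I would reduce to the case where $V$ is irreducible: writing $V=V_1\cup\cdots\cup V_N$ for its irreducible components, we have $F(V)=F(V_1)\cup\cdots\cup F(V_N)$, so that $\deg F(V)\le\sum_i\deg F(V_i)$ and $\deg V=\sum_i\deg V_i$; since $\dim F(V_i)\le m$ and $d\ge 1$, the bound for each $V_i$ would imply the general statement. So assume $V$ is irreducible. As $F$ is a morphism of a complete variety, $F(V)$ is an irreducible closed subvariety of $\Pp^s$, and by the definition of degree there is a linear subspace $L\subset\Pp^s$ of codimension $m$ meeting $F(V)$ in finitely many points with $\deg F(V)=\#(F(V)\cap L)$.

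The key observation is that the preimage under $F$ of a hyperplane is a degree-$d$ section of $V$. Writing $L=H_1\cap\cdots\cap H_m$ with $H_i=\{\sum_j a_{ij}Y_j=0\}$, and using $F=(f_0\klk f_s)$, we get $F^{-1}(H_i)=V\cap\{g_i=0\}$, where $g_i\defineAs\sum_j a_{ij}f_j$ is homogeneous of degree $d$. Hence
\[
  F^{-1}(L)=V\cap\{g_1=0\}\cap\cdots\cap\{g_m=0\}.
\]
Applying the B\'ezout inequality (\ref{eq:Bezout}) $m$ times then yields the upper bound $\deg F^{-1}(L)\le\deg V\cdot d^m$.

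For the matching lower bound I would exploit that $F^{-1}(L)$ is the union of the fibers $F^{-1}(y)$ over the points $y\in F(V)\cap L$. Each such fiber is nonempty, since $y$ lies in the image, and fibers over distinct points are disjoint; hence $F^{-1}(L)$ has at least $\#(F(V)\cap L)$ irreducible components, each of degree at least one, so that $\deg F^{-1}(L)\ge\#(F(V)\cap L)=\deg F(V)$. Chaining this with the B\'ezout bound gives $\deg F(V)\le\deg V\cdot d^m$.

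The step I expect to require the most care is the lower bound $\deg F^{-1}(L)\ge\#(F(V)\cap L)$: it rests on the convention that the degree of a variety sums the degrees of \emph{all} its irreducible components, so that the mutually disjoint, nonempty fibers each force a distinct component contributing at least $1$ to the total. One must also check that the same $L$ realizing $\deg F(V)$ is the one producing the sections $g_i$, which is immediate since every codimension-$m$ linear space with finite $F(V)\cap L$ arises as such an intersection of $m$ hyperplanes.
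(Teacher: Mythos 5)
Your proposal is correct and follows essentially the same route as the paper's proof: reduce to $V$ irreducible, pull back the $m$ hyperplanes cutting $F(V)$ in $\deg F(V)$ points to degree-$d$ sections $V\cap\{g_i=0\}$, bound $\deg F^{-1}(L)$ by B\'ezout, and recover $\deg F(V)\le\deg F^{-1}(L)$ by counting the irreducible components lying over the distinct points of $F(V)\cap L$. The paper phrases the last step as ``each component is mapped by $F$ to a single point of $\mathcal{S}$ and $F(F^{-1}(\mathcal{S}))=\mathcal{S}$,'' which is the same disjoint-fibers argument you give.
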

\begin{proof}
  We may assume without loss of generality that $V$ is irreducible.
  Then $F(V)$ is an irreducible variety of $\Pp^s$. Let $H_1\klk H_m$
  be hyperplanes of $\Pp^s$ such that $\#(F(V)\cap H_1\cap\cdots\cap
  H_m)=\deg F(V)$ holds. Let $\mathcal{S}\defineAs F(V)\cap
  H_1\cap\cdots\cap H_m$. Then
  $$F^{-1}(\mathcal{S})=V\cap F^{-1}(H_1)\cap\cdots\cap F^{-1}(H_m).$$

  Observe that $F^{-1}(H_i)=V\cap\{g_i=0\}$, where $g_i$ is a linear
  combination of the polynomials $f_0\klk f_s$ for $i=1\klk m$.
  Therefore, by the B\'ezout inequality (\ref{eq:Bezout}) it follows
  that $\deg F^{-1}(\mathcal{S})\le \deg V\cdot d^m$ holds. Let
  $F^{-1}(\mathcal{S})=\cup_{i=1}^NC_i$ be the decomposition of
  $F^{-1}(V)$ into irreducible components. Since
  $F(F^{-1}(\mathcal{S}))=\mathcal{S}$ and each irreducible component
  $C_i$ of $F^{-1}(\mathcal{S})$ is mapped by $F$ to a point of
  $\mathcal{S}$, we have
  $$\deg F(V)=\#(\mathcal{S})\le N\le\sum_{i=1}^N\deg C_i=\deg
  F^{-1}(\mathcal{S})\le \deg V\cdot d^m.$$
  This finishes the proof of the lemma.
\qed\end{proof}
%
%
%
\subsection{$\fq$-rational points}
The set of $\fq$-rational points of $V$, namely $V\cap \Pp^r(\fq)$,
is denoted by $V(\fq)$. In some simple cases it is possible to
determine the exact value of $\# V(\fq)$. For instance, the number
$p_r$ of elements of $\Pp^r(\fq)$ is given by $p_r=q^r+q^{r-1}\plp
q+1$. In what follows we shall use repeatedly the following
elementary upper bound on the number of $\fq$-rational points of a
projective variety $V$ of dimension $s$ and degree $d$ (see, e.g.,
\cite[Proposition 12.1]{GhLa02a} or \cite[Proposition 3.1]{CaMa07}):
\begin{equation}\label{eq:upper-bound-number-rat-points}
  \# V(\fq) \le d p_s \leq 2dq^{s}.
\end{equation}
%
%
\subsection{Chow varieties of curves}
Suppose that $r\ge 2$ and fix $d>0$. Consider the incidence variety
$$\Psi\defineAs \{(\boldsymbol{x},H_1,H_2)\in\Pp^r\times
\Pp^{r*}\times\Pp^{r*}:\boldsymbol{x}\in H_1\cap H_2\}.$$
Let $\pi:\Psi\to\Pp^r$ and $\eta:\Psi\to\Pp^{r*}\times\Pp^{r*}$ denote
the standard projections. Fix a curve $C\subset\Pp^r$ of degree $d$
and consider the ``restricted'' incidence variety
$$\Psi_C\defineAs \pi^{-1}(C)\defineAs \{(\boldsymbol{x},H_{1}, H_{2})\in
C\times\Pp^{r*}\times\Pp^{r*}:\boldsymbol{x}\in H_1\cap H_2\}.$$
It turns out that $\eta(\Psi_C)$ is a bihomogeneous hypersurface of
$\Pp^{r*}\times\Pp^{r*}$ of bidegree $(d,d)$ (see, e.g.,
\cite[Lecture 21]{Harris92}). This hypersurface is thus defined by a
reduced bihomogeneous polynomial
$F_C\in\cfq[\boldsymbol{A},\boldsymbol{B}]\defineAs \cfq[A_0\klk
A_r,B_0\klk B_r]$ of bidegree $(d,d)$, unique up to scaling by
nonzero elements of $\cfq$. In this way, we see that $\eta(\Psi_C)$
can be represented by a point $[F_C]$ in the projective space
$\Pp\V_{d,r}$, where $\V_{d,r}$ denotes the vector subspace of
$\cfq[\boldsymbol{A},\boldsymbol{B}]$ spanned by all the
bihomogeneous polynomials of bidegree $(d,d)$. The point $[F_C]$ is
called the \emph{Chow form} of $C$. The Zariski closure in
$\Pp\V_{d,r}$ of the set of points $[F_C]$, where $C$ runs over the
set of curves (equidimensional varieties of dimension 1) of degree
$d$ of $\Pp^r$, is called the \emph{Chow variety} of curves of
degree $d$ in $\Pp^r$ and denoted by $\mathcal{C}_{d,r}$.
%
%
\subsubsection{The correspondence between degree-$d$ cycles in $\Pp^r$
  and points in $\Pp\V_{d,r}$} \label{subsubsec:correspondence curves
  - points}
Each point of the Chow variety $\mathcal{C}_{d,r}$ actually
corresponds to a unique \emph{effective cycle} on $\Pp^r$ of dimension
1 and degree $d$, that is, to a formal linear combination $\sum
a_iC_i$, where each $C_i$ is an irreducible curve of $\Pp^r$, each
$a_i$ is a positive integer and $\sum a_i\deg(C_i)=d$.  Such a
correspondence is defined assigning to each cycle $\sum a_iC_i$ the
point of $\Pp\V_{d,r}$ determined by the polynomial $\prod_i
F_{C_i}^{a_i}$, where $F_{C_i}$ is a minimal--degree defining
polynomial of the hypersurface $\eta(\Psi_{C_i})$ for each $i$.

Let $\sum a_iC_i$ be an effective cycle of dimension 1 and degree
$d$ and let $[F]\in\mathcal{C}_{d,r}$ be the corresponding Chow
form. Let $\{f_\lambda:\lambda\in\Lambda\}$ be the set of all
nonzero coefficients of $F$. Following, e.g., \cite[Exercise
I.1.18]{Kollar99}, we define the \emph{smallest field of definition}
of $[F]$ as the field extension $\K$ of $\fp$ determined by the
fractions of nonzero coefficients of $F$, namely $\K\defineAs
\fp(f_\lambda/f_\mu:\lambda,\mu\in\Lambda)$. It follows that there
exists a scalar multiple of $F$ with coefficients in $\K$, and there
are no scalar multiples of $F$ with coefficients in a proper
subfield of $\K$. For an arbitrary subfield $\K$ of $\cfq$, we say
that an effective cycle $\sum a_iC_i$ is $\K$-definable (a
$\K$-cycle for short) if the smallest field of definition of its
Chow form $[\prod_iF_{C_i}^{a_i}]$ is a subfield of $\K$. We use the
following result.
\begin{theorem}[{\cite[Corollary I.3.24.5]{Kollar99}}]
  \label{th:correspondence-chow-points-and-curves} Let $\K$ be a
  subfield of $\cfq$. There exists a one-to-one correspondence between
  the set of effective $\K$-cycles of dimension 1 and degree $d$ and
  the set of $\K$-rational points in the Chow variety
  $\mathcal{C}_{d,r}$.
\end{theorem}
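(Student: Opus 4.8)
The plan is to reduce the statement to the correspondence over the algebraic closure $\cfq$ together with a Galois descent argument. Over $\cfq$, the assignment $Z=\sum_ia_iC_i\mapsto [F_Z]\defineAs\big[\prod_iF_{C_i}^{a_i}\big]$ is already known to be a bijection between effective cycles of dimension $1$ and degree $d$ and the points of $\mathcal{C}_{d,r}(\cfq)$; this is the classical content recalled above, for which we rely on \cite{Kollar99}. Since any subfield $\K$ of $\cfq$ is algebraic over $\fp$, hence perfect, and $\cfq$ is its algebraic closure, the extension $\cfq/\K$ is Galois; write $G_\K\defineAs\operatorname{Gal}(\cfq/\K)$. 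Because $\mathcal{C}_{d,r}$ is defined over the prime field, its $\K$-rational points are exactly the points of $\mathcal{C}_{d,r}(\cfq)$ fixed by $G_\K$, and by the very definition of $\K$-definability the $\K$-cycles are exactly those $Z$ whose Chow form $[F_Z]$ is fixed by $G_\K$. It therefore suffices to prove that the bijection $Z\mapsto[F_Z]$ is $G_\K$-equivariant.

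The key step is this equivariance. First I would observe that the incidence variety $\Psi$ and the two projections $\pi$ and $\eta$ are defined over $\fp$, so for every $\sigma\in G_\K$ and every irreducible curve $C$ one has $\Psi_{\sigma(C)}=\sigma(\Psi_C)$, whence $\eta(\Psi_{\sigma(C)})=\sigma\big(\eta(\Psi_C)\big)$. Since $F_C$ is, up to a nonzero scalar, the reduced bihomogeneous polynomial cutting out the hypersurface $\eta(\Psi_C)$, applying $\sigma$ to its coefficients yields a defining polynomial of $\sigma\big(\eta(\Psi_C)\big)=\eta(\Psi_{\sigma(C)})$; by uniqueness up to scaling this gives $[\sigma(F_C)]=[F_{\sigma(C)}]$ in $\Pp\V_{d,r}$. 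Extending multiplicatively over the components of a cycle $Z=\sum_ia_iC_i$ I then obtain $[\sigma(F_Z)]=\big[\prod_i\sigma(F_{C_i})^{a_i}\big]=\big[\prod_iF_{\sigma(C_i)}^{a_i}\big]=[F_{\sigma(Z)}]$, where $\sigma(Z)\defineAs\sum_ia_i\,\sigma(C_i)$ is again an effective $1$-cycle of degree $d$. This is precisely the required equivariance.

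With equivariance in hand the descent is immediate. A cycle $Z$ is a $\K$-cycle iff $[F_Z]$ is fixed by every $\sigma\in G_\K$; by equivariance this is equivalent to $[F_{\sigma(Z)}]=[F_Z]$ for all such $\sigma$, and by the injectivity of the $\cfq$-correspondence this holds iff $\sigma(Z)=Z$ for all $\sigma\in G_\K$, that is, iff the components of $Z$ are permuted by $G_\K$ with their multiplicities preserved. On the point side, $[F_Z]$ is $G_\K$-fixed iff all ratios of its nonzero coordinates lie in the fixed field $\cfq^{G_\K}=\K$, i.e.\ iff its smallest field of definition is contained in $\K$. Hence the $\cfq$-bijection carries the $G_\K$-fixed cycles onto the $G_\K$-fixed points of $\mathcal{C}_{d,r}(\cfq)$, that is, the $\K$-cycles onto $\mathcal{C}_{d,r}(\K)$, which is the asserted one-to-one correspondence.

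I expect the main obstacle to lie not in the descent bookkeeping but in making the $\cfq$-level correspondence and the equivariance of the Chow-form construction fully rigorous. One must know that for irreducible $C$ the polynomial $F_C$ is an irreducible element of $\cfq[\boldsymbol{A},\boldsymbol{B}]$ and that $C\mapsto F_C$ is injective, so that a cycle can be recovered from its Chow form by factoring $\prod_iF_{C_i}^{a_i}$ into irreducibles and reading off multiplicities. Granting these classical facts from \cite{Kollar99}, the conjugation identity $\sigma\big(\eta(\Psi_C)\big)=\eta(\Psi_{\sigma(C)})$ together with uniqueness up to scalars does the rest.
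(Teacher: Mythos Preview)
The paper does not give its own proof of this theorem; it is quoted verbatim as \cite[Corollary I.3.24.5]{Kollar99} and used as a black box. There is therefore nothing in the paper to compare your argument against.

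That said, your Galois descent sketch is sound, and in fact somewhat more elaborate than strictly necessary given the paper's conventions. Note that the paper \emph{defines} a $\K$-cycle to be an effective cycle whose Chow form has smallest field of definition contained in $\K$; with that definition, once the $\cfq$-level bijection $Z\mapsto[F_Z]$ is granted, the correspondence over $\K$ is essentially immediate: a point of $\Pp\V_{d,r}$ has smallest field of definition contained in $\K$ exactly when it is $\K$-rational, so the $\cfq$-bijection restricts to a bijection between $\K$-cycles and $\mathcal{C}_{d,r}(\K)$. Your equivariance computation $[\sigma(F_C)]=[F_{\sigma(C)}]$ is correct and supplies extra information (namely that $\K$-definability of the Chow form is equivalent to Galois-invariance of the underlying cycle as a cycle), but the paper neither proves nor needs this refinement.
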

The inverse of the correspondence of Theorem
\ref{th:correspondence-chow-points-and-curves} can be explicitly
described in the following terms. Let $[F]$ be a point of
$\mathcal{C}_{d,r}$ with $F$ reduced. We define the \emph{support} of
$[F]$ by
\begin{equation}\label{eq:support-of-a-chow-form}
  \mathrm{supp}(F)\defineAs \{\boldsymbol{x}\in\Pp^r: \pi^{-1}(\boldsymbol{x})\subset\eta^{-1}(V_F)\},
\end{equation}
where $V_F$ is the hypersurface of $\Pp\V_{d,r}$ defined by $F$.  We
have $\mathrm{supp}(F_C)=C$ for every curve $C\subset\Pp^r$ of degree
$d$ (see, e.g., \cite[Lecture 21]{Harris92}). This identity is
extended straightforwardly to cycles by taking into account the
multiplicity of each factor of $F$.
%
%
\subsubsection{Reducible and $\K$-reducible cycles}
\label{subsubsec:reducible-cycles}
If $\K$ is a subfield of $\cfq$, an effective $\K$-cycle $C$ is called
\emph{$\K$-reducible} if there exist $s\ge 2$ and effective $\K$-cycles
$C_1\klk C_s$ such that $C=\sum_{i=1}^sC_i$ holds.  When $\K=\cfq$ we
shall omit the reference to the field of definition and simply speak
about \emph{(absolutely) reducible} effective cycles.

The set $\mathcal{R}_{d,r}$ of reducible effective cycles of $\Pp^r$
of dimension 1 and degree $d$ is a closed subset of the Chow variety
$\mathcal{C}_{d,r}$. In order to prove this, fix $1\le k\le d-1$ and
consider the Chow varieties $\mathcal{C}_{k,r}$ and
$\mathcal{C}_{d-k,r}$. We have a regular map
$$
\mu_{k,d,r}:\mathcal{C}_{k,r}\times\mathcal{C}_{d-k,r}\to
\mathcal{C}_{d,r},
$$
which is induced by the multiplication mapping
$\Pp\V_{k,r}\times\Pp\V_{d-k,r}\to\Pp\V_{d,r}$.  It is easy to see
that the following identity holds:
\begin{equation}\label{eq:reducible-curves-1}
  \mathcal{R}_{d,r}=\bigcup_{1\le
    k\le  d/2 } \mathrm{im}(\mu_{k,d,r}).
\end{equation}
Since $\mathcal{C}_{k,r}\times\mathcal{C}_{d-k,r}$ is a projective
variety and the image of a projective variety under a regular map is
closed we conclude that $\mathcal{R}_{d,r}$ is a closed subset of
$\mathcal{C}_{d,r}$.
%
%
\section{The codimension of the variety of reducible
  curves}\label{sec:codimension}
The irreducibility of a single polynomial over a finite field shows
a qualitatively different behavior between one and at least two
variables. In the former case, fairly few (a fraction of about $1/d$
at degree $d \geq 2$) are irreducible, while in the latter case
almost all (a fraction of about $1-q^{-d+1}$ over $\mathbb{F}_{q}$)
are irreducible. One may wonder whether such a qualitative jump also
occurs for systems of more than one polynomial. We consider this
question in a special case, namely where the equations define a
curve in $\mathbb{P}^{r}$. It turns out that there is a threshold
$d_{0}(r)=4r-8$ for the degree where this jump occurs. At lower
degrees, curves are generically reducible (with single lines, of
degree $1$, as a natural exception), while at degree $d \geq
d_{0}(r)$ a generic curve is irreducible.

The qualitative result follows from the work of Eisenbud and Harris
\cite{EiHa92}. Our contribution are quantitative estimates for the
fractions under consideration. Fairly precise bounds are available
for single polynomials (that is, planar curves). Similarly, our
lower and upper bounds for $d \geq d_{0}(r)$ are given by the same
power of $q$, but with two different coefficients depending on $d$
and $r$.

The foundation for our work are the following results from
\cite{EiHa92}. They consider two irreducible subvarieties of
$\mathcal{C}_{d,r}$:
\begin{align*}
  dG(1,r)& \defineAs  \{\text{sums of}~ d ~\text{lines in}~ \mathbb{P}^{r}\}, \nonumber \\
  P(d,r)& \defineAs \{\text{plane curves of degree $d$ in}~
  \mathbb{P}^{r}\}. \nonumber
\end{align*}
It is easy to see that, for $d \geq 2$,
\begin{align*}
  \dim dG(1,r)& = 2d(r-1),\\
  \dim P(d,r) & = 3(r-2)+ d(d+3)/2.
\end{align*}
\begin{fact}[{Eisenbud \&\ Harris \cite[Theorems 1 and 3]{EiHa92}}]
  \label{fact:dimension-chow-var-eis-harris}
  Let $d \geq 2$ and $r \geq 3$. Then
  \begin{equation}\label{eq:dimension-chow-var-for-r-ge-4}
    \dim \mathcal{C}_{d,r}= \max ~\{2d(r-1), 3(r-2)+ d(d+3)/2\}.
  \end{equation}
  For $r=3$ and $d \geq4$, and for $r \geq4$, either $dG(1,r)$ or
  $P(d,r)$ is the unique irreducible component of maximal dimension.
\end{fact}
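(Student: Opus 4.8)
The plan is to establish the dimension formula \eqref{eq:dimension-chow-var-for-r-ge-4} by first recording the two dimension counts and then invoking the component structure from Eisenbud--Harris. The two displayed formulas for $\dim dG(1,r)$ and $\dim P(d,r)$ are the elementary computations I would carry out first. For $dG(1,r)$, a single line in $\Pp^r$ is a point of the Grassmanian $\G(1,r)$, whose dimension is $2(r-1)$; an unordered sum of $d$ lines is parametrized by the $d$-th symmetric product, which has the same dimension as the $d$-fold product, namely $2d(r-1)$. For $P(d,r)$, a plane curve of degree $d$ lives in some $2$-plane of $\Pp^r$, so I would count the dimension of the space of such $2$-planes (the Grassmanian $\G(2,r)$ has dimension $3(r-2)$) plus the dimension of the space of degree-$d$ plane curves inside a fixed plane, which is the projectivization of the space of ternary forms of degree $d$, of dimension $\binom{d+2}{2}-1 = d(d+3)/2$. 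Adding these gives $3(r-2)+d(d+3)/2$.

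Next I would assemble the main claim. The variety $\mathcal{C}_{d,r}$ is a projective variety whose dimension equals the maximum of the dimensions of its irreducible components. By the cited results of \cite{EiHa92}, under the hypotheses $d\ge 2$ and $r\ge 3$ (with the stated restriction $d\ge 4$ when $r=3$), every irreducible component of $\mathcal{C}_{d,r}$ has dimension at most $\max\{2d(r-1),\,3(r-2)+d(d+3)/2\}$, and one of the two distinguished subvarieties $dG(1,r)$ or $P(d,r)$ attains this maximum as a genuine component. Since both $dG(1,r)$ and $P(d,r)$ are themselves irreducible subvarieties of $\mathcal{C}_{d,r}$ of the computed dimensions, the dimension of $\mathcal{C}_{d,r}$ is bounded below by each of them and hence by their maximum; the Eisenbud--Harris upper bound on the component dimensions gives the matching upper bound, yielding equality.

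For the uniqueness assertion in the range $r=3,\ d\ge 4$ and $r\ge 4$, I would again appeal directly to \cite[Theorems 1 and 3]{EiHa92}: in these ranges the larger of the two numbers $2d(r-1)$ and $3(r-2)+d(d+3)/2$ is strictly larger than every other component dimension, so the component realizing the maximum is unique. Concretely, I would note that comparing the two quadratics in $d$ determines which of $dG(1,r)$ or $P(d,r)$ dominates, and the threshold $d_0(r)=4r-8$ arises exactly from the sign of the difference $2d(r-1)-\bigl(3(r-2)+d(d+3)/2\bigr)$.

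The main obstacle is not the elementary dimension bookkeeping but rather that the heart of the statement—that no \emph{other} irreducible component of $\mathcal{C}_{d,r}$ can exceed these two dimensions, and that one of them is actually a top-dimensional component—is precisely the deep content of \cite{EiHa92}, which I am entitled to quote rather than reprove. Thus the genuine work I would carry out reduces to the two Grassmanian-plus-linear-system dimension counts and the careful verification that the hypotheses ($d\ge 2$, $r\ge 3$, and $d\ge 4$ when $r=3$) match those under which the Eisenbud--Harris classification applies, so that the quoted maximality and uniqueness statements transfer verbatim.
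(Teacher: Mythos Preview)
Your proposal is correct and matches the paper's treatment exactly: the paper records the two elementary dimension counts $\dim dG(1,r)=2d(r-1)$ and $\dim P(d,r)=3(r-2)+d(d+3)/2$ as ``easy to see'' and then states this Fact purely as a citation of \cite[Theorems~1 and~3]{EiHa92}, without giving any further proof. There is nothing to add.
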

We let $\mathcal{R}_{d,r}\defineAs \{ C \in \mathcal{C}_{d,r} \colon
\text{$C$ reducible} \}$.  In the case $(d,r)= (2,3)$, both
$\mathcal{R}_{2,3}=2G(1,3)$ and $P(2,3)$ have dimension $8$, so that
$\mathrm{codim}_{\mathcal{C}_{d,r}}\mathcal{R}_{d,r}=0$.  In the case
$(d,r) = (3,3)$, we have $\mathcal{R}_{3,3}\supset 3G(1,3)$ and both
$3G(1,3)$ and $P(3,3)$ have dimension $12$, so that
$\mathrm{codim}_{\mathcal{C}_{d,r}}\mathcal{R}_{d,r}=0$.  We have
\begin{align*}
  2d(r-1)\leq 3(r-2)+ \frac{d(d+3)}{2} \Longleftrightarrow 4r -
  \frac{17}{2} - \frac{3}{2(2d-3)} \leq d,
\end{align*}
and also equalities in the two conditions correspond to each
other. Since $3/(2d-3)$ is not an integer for $d>3$, $dG(1,r)$ and
$P(d,r)$ never have the same dimension except for $d=1$, where $P(1,r)
= 1G(1,r)= G(1,r)$ has dimension $2(r-1)$, and for the exceptional
cases from above, where
\begin{equation}\label{eq:d,r}
  (d,r) \text{ is } (2,3) \text{ or } (3,3).
\end{equation}
Furthermore, $3/2(2d-3)< 1/2$ for $d>3$. We abbreviate $b_{d,r} = \dim
\mathcal{C}_{d,r}$ and reword Fact
\ref{fact:dimension-chow-var-eis-harris} as follows.

\begin{fact}\label{fact:dimension-chow-var}
  Let $d \geq 2$ and $r\geq3$. Then
  \begin{displaymath}
    b_{d,r}= \dim \mathcal{C}_{d,r}= \left\{\begin{array}{ll}
        3(r-2)+ {d(d+3)}/{2} & \text{if }d\geq 4r-8,\\
        2d(r-1) & \text{otherwise}.\\
      \end{array}\right.
  \end{displaymath}
  $\mathcal{C}_{d,r}$ has exactly one component of maximal dimension,
  namely $P(d,r)$ and $dG(1,r)$ in the first and second case,
  respectively, except for (\ref{eq:d,r}).
\end{fact}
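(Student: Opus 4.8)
The plan is to derive Fact \ref{fact:dimension-chow-var} from Fact \ref{fact:dimension-chow-var-eis-harris} by a purely arithmetic case analysis, translating the inequality between the two candidate dimensions into the stated threshold. The key identity to establish first is the equivalence displayed in the text, namely
\begin{equation*}
  2d(r-1)\le 3(r-2)+\frac{d(d+3)}{2}\iff 4r-\frac{17}{2}-\frac{3}{2(2d-3)}\le d.
\end{equation*}
This is a direct algebraic manipulation: expanding $3(r-2)+d(d+3)/2-2d(r-1)$ and factoring out $(2d-3)/2$ yields the right-hand inequality after dividing. I would carry this out once and cite it as the backbone of the argument.

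Next I would show that the right-hand condition is equivalent to the clean threshold $d\ge 4r-8$ for all relevant $d$ and $r$. The point is that $4r-17/2-3/(2(2d-3))$ lies strictly between $4r-9$ and $4r-8$ for $d>3$, since $0<3/(2(2d-3))<1/2$ in that range (the latter bound is noted explicitly in the text). Because $d$ is an integer, the inequality $4r-17/2-3/(2(2d-3))\le d$ holds if and only if $d\ge 4r-8$; the half-integer gap guarantees no integer $d$ falls into the ambiguous band, and it also rules out equality of the two dimensions for $d>3$. I would handle the small-degree values $d=2,3$ separately by inspection, confirming that the threshold formula $4r-8$ still correctly sorts them into the $2d(r-1)$ branch whenever $r\ge3$ (and noting they land among the exceptional cases \eqref{eq:d,r} precisely when $r=3$).

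With the threshold settled, the formula for $b_{d,r}=\dim\mathcal{C}_{d,r}$ follows immediately by substituting into \eqref{eq:dimension-chow-var-for-r-ge-4}: the maximum equals $3(r-2)+d(d+3)/2$ exactly when $d\ge 4r-8$ and equals $2d(r-1)$ otherwise. The uniqueness of the maximal-dimensional component is then inherited directly from the second assertion of Fact \ref{fact:dimension-chow-var-eis-harris}, which already names $P(d,r)$ or $dG(1,r)$ as the unique such component (for $r=3,d\ge4$ and for $r\ge4$), with the strict inequality between the two dimensions for $d>3$ ensuring there is no tie. The only subtlety is to track the exceptional pairs \eqref{eq:d,r}: for $(d,r)=(2,3)$ and $(3,3)$ the two dimensions coincide and both components are maximal, which is exactly why they must be excluded from the uniqueness claim.

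I do not anticipate a serious obstacle here, since the statement is essentially a bookkeeping reformulation of the cited Eisenbud–Harris results; the entire content is the boundary analysis. The most delicate point, and the one I would write out most carefully, is the claim that the fractional term $3/(2(2d-3))$ never pushes an integer $d$ across the threshold—this is what converts the messy inequality into the exact cutoff $4r-8$ and simultaneously justifies that the two dimensions agree only in the exceptional cases. Everything else is routine substitution.
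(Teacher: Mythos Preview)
Your proposal is correct and follows essentially the same route as the paper: the paper also derives Fact~\ref{fact:dimension-chow-var} as a direct rewording of Fact~\ref{fact:dimension-chow-var-eis-harris} via the displayed algebraic equivalence, the observation that $3/(2(2d-3))<1/2$ for $d>3$ (so the threshold is the integer $4r-8$ and the two dimensions never tie for $d>3$), and a separate check of the small-degree cases $(d,r)=(2,3),(3,3)$. Your write-up is somewhat more explicit than the paper's terse paragraph, but the content and the order of steps are the same.
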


When $d<4r-8$ and excepting (\ref{eq:d,r}), then $dG(1,r)$ is the
dominant component of $\mathcal{C}_{d,r}$ and a generic curve in
$\mathcal{C}_{d,r}$ is reducible. On the other hand, for $d\ge 4r-8$
the generic curve is irreducible, and we now want to determine the
codimension of the reducible ones. For planar curves and $d \geq 2$
this codimension is $d-1$, and the dominating component in the
reducible ones consists of curves that are a union of a line and an
irreducible (planar) curve of degree $d-1$ (see \cite[Theorem
2.1]{Gathen08}). For $r \geq 3$, the dimension of this set of curves
is $2(r-1)+3(r-2)+ {(d-1)(d+2)}/{2}$ when $d \geq 4r-7$, and the
codimension is $d-2r+3$.

Fix $1\le k\le d-1$ and consider the Chow varieties
$\mathcal{C}_{k,r}$ and $\mathcal{C}_{d-k,r}$. Recall the morphism
$$
\mu_{k,d,r}:\mathcal{C}_{k,r}\times\mathcal{C}_{d-k,r}\to
\mathcal{C}_{d,r},
$$
induced by the multiplication mapping
$\Pp\V_{k,r}\times\Pp\V_{d-k,r}\to\Pp\V_{d,r}$.
Our aim is to bound in (\ref{eq:reducible-curves-1}) the dimension
of the image $\mathrm{im}(\mu_{k,d,r})$ of $\mu_{k,d,r}$.

\begin{theorem}\label{th:dimension-variety-red-curves}
  Let $r \geq 3$ and $d \geq 4r-8$. Then
  \begin{displaymath}
    \mathrm{codim}_{\mathcal{C}_{d,r}}\mathcal{R}_{d,r}= \left\{\begin{array}{ll}
        r-2 & ~\text{if} ~d=4r-8,\\
        d-2r+3 & ~\text{otherwise},
      \end{array}\right.
  \end{displaymath}
  \begin{displaymath}
    \mathrm{dim} ~ \mathcal{R}_{d,r}= \left\{\begin{array}{ll}
        8(r-1)(r-2) & ~\text{if} ~d=4r-8,\\
        5r-9+d(d+1)/2 & ~\text{otherwise}.
      \end{array}\right.
  \end{displaymath}
\end{theorem}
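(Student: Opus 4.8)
The plan is to read off $\dim\mathcal{R}_{d,r}$ from the decomposition (\ref{eq:reducible-curves-1}), namely $\mathcal{R}_{d,r}=\bigcup_{1\le k\le d/2}\mathrm{im}(\mu_{k,d,r})$, and then to subtract the result from $b_{d,r}=\dim\mathcal{C}_{d,r}$ (Fact \ref{fact:dimension-chow-var}) to obtain the codimension. Since a finite union has the dimension of its largest member, the first task is to compute $\dim\mathrm{im}(\mu_{k,d,r})$ for each $k$. The bound $\dim\mathrm{im}(\mu_{k,d,r})\le\dim(\mathcal{C}_{k,r}\times\mathcal{C}_{d-k,r})=b_{k,r}+b_{d-k,r}$ holds trivially, and I would prove equality by showing that $\mu_{k,d,r}$ is generically finite. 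Indeed, by Theorem \ref{th:correspondence-chow-points-and-curves} a point in the image is an effective cycle $Z$ of degree $d$, and its fiber is the set of ways of writing $Z$ as a sum of an effective cycle of degree $k$ and one of degree $d-k$; since the Chow form of $Z$ factors uniquely in the unique factorization domain $\cfq[\boldsymbol{A},\boldsymbol{B}]$, there are only finitely many such decompositions. Applying this on each irreducible component of the source and using that a dominant morphism with finite generic fibers preserves dimension gives $\dim\mathrm{im}(\mu_{k,d,r})=b_{k,r}+b_{d-k,r}$, whence $\dim\mathcal{R}_{d,r}=\max_{1\le k\le d/2}(b_{k,r}+b_{d-k,r})$.

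The core of the proof is to locate this maximum at $k=1$. I would establish that $j\mapsto b_{j,r}$ is \emph{convex}, i.e.\ that the successive differences $\Delta(j)\defineAs b_{j,r}-b_{j-1,r}$ are non-decreasing. A direct computation from Fact \ref{fact:dimension-chow-var} gives $\Delta(j)=2(r-1)$ for $2\le j\le 4r-9$, $\Delta(4r-8)=3r-4$, and $\Delta(j)=j+1$ for $j\ge 4r-7$; for $r\ge 3$ one checks $2(r-1)\le 3r-4\le 4r-6\le\Delta(j)$, so $\Delta$ is non-decreasing. Now for $2\le k\le d/2$ we have $k\le d-k+1$, hence $\Delta(k)\le\Delta(d-k+1)$, which is exactly the inequality $b_{k,r}+b_{d-k,r}\le b_{k-1,r}+b_{d-k+1,r}$. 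Thus $k\mapsto b_{k,r}+b_{d-k,r}$ is non-increasing on $\{1,\dots,\lfloor d/2\rfloor\}$, its maximum is $b_{1,r}+b_{d-1,r}$, and geometrically the dominant reducible curves are the union of a line and a curve of degree $d-1$.

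It then remains to evaluate $b_{1,r}+b_{d-1,r}$, with $b_{1,r}=\dim\mathcal{C}_{1,r}=\dim G(1,r)=2(r-1)$, distinguishing the two regimes of Fact \ref{fact:dimension-chow-var} for $b_{d-1,r}$. If $d=4r-8$ then $d-1<4r-8$, so $b_{d-1,r}=2(d-1)(r-1)$ and $b_{1,r}+b_{d-1,r}=2d(r-1)=8(r-1)(r-2)$; if $d\ge 4r-7$ then $d-1\ge 4r-8$, so $b_{d-1,r}=3(r-2)+(d-1)(d+2)/2$ and $b_{1,r}+b_{d-1,r}=5r-9+d(d+1)/2$. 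Subtracting these from $b_{d,r}=3(r-2)+d(d+3)/2$ (valid since $d\ge 4r-8$) yields the codimensions $r-2$ and $d-2r+3$ respectively, as claimed. I expect the main obstacle to be twofold: first, the clean justification that $\mu_{k,d,r}$ loses no dimension in passing to its image (generic finiteness); and second, the careful bookkeeping at the threshold $j=4r-8$, where the formula for $b_{j,r}$ switches and where the monotonicity of the differences $\Delta(j)$ must be verified by hand.
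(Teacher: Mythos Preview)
Your argument is correct, and it takes a genuinely different route from the paper. The paper proceeds by defining $u(k)=b_{d,r}-b_{k,r}-b_{d-k,r}$, partitioning $K=\{1,\dots,\lfloor d/2\rfloor\}$ into three subsets according to whether $k$ and $d-k$ lie above or below the threshold $4r-8$, writing $u(k)$ explicitly as a quadratic in each regime, and then minimizing case by case to find $m=u(1)$. For the matching lower bound the paper only shows that $\mu_{1,d,r}$ has \emph{some} finite fiber (via injectivity on $\mathcal{C}_{1,r}\times(\mathcal{C}_{d-1,r}\setminus\mathcal{R}_{d-1,r})$ when $d\ge 4r-7$, and via $dG(1,r)\subseteq\mathcal{R}_{d,r}$ when $d=4r-8$). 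Your approach replaces the three-way case split by the single structural observation that $j\mapsto b_{j,r}$ is convex, from which $k\mapsto b_{k,r}+b_{d-k,r}$ is automatically non-increasing; and it replaces the ad hoc lower-bound argument by the uniform observation that \emph{every} fiber of \emph{every} $\mu_{k,d,r}$ is finite, via unique factorization of Chow forms. Your route is shorter and more conceptual; the paper's case analysis, on the other hand, yields the explicit formulas for $u(k)$ on each $K_i$, which are not needed for the theorem but give a more detailed picture of how the codimension varies with $k$.
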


\begin{proof}
  We let $K\defineAs \{1,\ldots,\lfloor d/2\rfloor \}$,
  $b_{d,r}\defineAs \dim\mathcal{C}_{d,r}$ for $d>0$, and
  $$
  u(k)\defineAs b_{d,r}-b_{k,r}-b_{d-k,r}
  $$
  for $k \in K$, so that $\mathrm{codim}_{\mathcal{C}_{d,r}}
  \mathcal{R}_{d,r} \geq \min\{u(k)\colon k \in K\}$. We abbreviate
  the latter as $m$ and first show that it equals the value claimed
  for the codimension.

  We note that $d/2 \geq 2$ and $k \leq d-k$ for all $k \in K$ and
  define a partition of $K$ into three subsets $K_{1}, K_{2}, K_{3}$
  as follows:
  \begin{align*}
    &K_{1}= \{k \in K \colon k \geq4r-8\},\\
    &K_{2}= \{k \in K \colon k<4r-8,d-k \geq 4r-8\},\\
    &K_{3} = \{k \in K \colon d-k < 4r-8\}.
  \end{align*}
  Furthermore, we let
  $$
  m_{i}= \min \{u(k)\colon k\in K_{i}\}
  $$
  for $1 \leq i \leq 3$, with $\min \varnothing = \infty$. Then $m=
  \min \{m_{1}, m_{2}, m_{3}\}$ and according to Fact
  \ref{fact:dimension-chow-var},
  \begin{displaymath}
    u(k) = \left\{\begin{array}{lll}
        k(d-k)-3(r-2) & ~\text{if}~ k \in K_{1}, \\
        ({k}/{2})(2d-4r+7-k) & ~\text{if} ~k\in K_{2},\\
        3(r-2)-2dr+{d(d+7)}/{2}& ~\text{if}~ k\in K_{3}.
      \end{array}\right.
  \end{displaymath}
  In the last line, $u(k)$ does not depend on $k$.

  If $d = 4r-8$, then $K=K_{3}$ and
  $$
  u(k)= u(1)= r-2=m_{3}
  $$
  for all $k \in K$.

  We may now assume that $d \geq 4r-7$. Then $1 \in K_{2}$ and $u(1)=
  d-2r+3$. For $k \in K_{2}$, $ u(k)$ is a quadratic function of $k$
  with roots $0$ and $2d-4r+7$ and takes its minimum in the range
  $1\leq k\leq 2d-4r+6$ at $k=1$. Since $k \leq d-4r+8 \leq 2d-4r+6$
  for all $k\in K_{2}$, the range includes all of $K_{2}$. Thus
  $m_{2}=d-2r+3$.

  To determine $m_{1}$, we may assume that $d/2 \geq 4r-8$, since
  otherwise $K_{1}= \varnothing$. The quadratic function $k(d-k)$
  takes its minimum value in $K_{1}$ at $4r-8$. Now
  \begin{align*}
    & m_{1}= -16r^{2}+4dr+61r-8d-58=u(4r-8)\geq u(1)=m_{2}
    \\
    & \Longleftrightarrow d\geq 4r-\frac{27}{4}+\frac{1}{4(4r-9)}.
  \end{align*}
  The last inequality is strictly satisfied, since
  $$
  d\geq 8r-16>4r-\frac{27}{4}+\frac{1}{4(4r-9)}.
  $$
  Thus $m_{1}>m_{2}$.  For $k \in K_{3}$, we have
  $$
  m_{3}= u(k)\geq u(1)=m_{2} \Longleftrightarrow 4r-\frac{15}{2} -
  \frac{3}{2(2d-5)}\leq d.
  $$
  The last condition is strictly satisfied, and therefore $m_{3}>
  m_{2}$ and $m=m_{2}$.  In all cases, we have $m=u(1)$.

  In order to prove a lower bound on $\dim \mathcal{R}_{d,r}$, it is
  sufficient to show that $\mu_{1,d,r}$ has some finite fiber, since
  then
  $$
  \text{codim}_{\mathcal{C}_{d,r}}(\text{im}\,\mu_{1,d,r})\leq m.
  $$
  If $d \geq 4r-7$, then $1 \in K_{2}$, $d-1 \geq4r-8$,
  $\text{codim}_{\mathcal{C}_{d-1,r}}\mathcal{R}_{d-1,r}>0$, and most
  curves in $\mathcal{C}_{d-1,r}$ are irreducible. Thus $\mu_{1,d,r}$
  restricted to $\mathcal{C}_{1,r} \times (\mathcal{C}_{d-1,r}
  \setminus \mathcal{R}_{d-1,r})$ is injective, and in particular we
  have some finite fiber. If $d=4r-8$, then $dG(1,r)\subseteq
  \mathcal{R}_{d,r} $ and $\text{codim}_{\mathcal{C}_{d,r}}
  \mathcal{R}_{d,r} \leq \text{codim}_{ \mathcal{C}_{d,r}}dG(1,r)
  =r-2$.
The claims about $\text{dim} \mathcal{R}_{d,r}$ follow from Fact
\ref{fact:dimension-chow-var}. \qed\end{proof}
%

%
\section{The degree of the Chow variety of curves}
\label{sec:probability-curve-reducible}

Recall that an effective $\fq$-cycle $C$ of $\Pp^r$ of dimension 1
is $\fq$-reducible if there exist $s\ge 2$ and effective
$\fq$-cycles $C_1\klk C_s$ such that $C=\sum_{i=1}^sC_i$.  According
to Theorem \ref{th:correspondence-chow-points-and-curves}, to each
$\fq$-cycle of degree $d$ corresponds a unique $\fq$-rational point
of $\mathcal{C}_{d,r}$. Therefore, to each $\fq$-cycle of degree $d$
which is $\fq$-reducible corresponds at least one point in the image
of $\mathcal{C}_{k,r}(\fq) \times \mathcal{C}_{d-k,r}(\fq)$ under
the mapping $\mu_{k,d,r}$ for a given $k\in K\defineAs
\{1\klk\lfloor{d}/{2}\rfloor\}$. More precisely, we have
$$\mathcal{R}_{d,r}^{(q)}(\fq)\defineAs \{C\in \mathcal{R}_{d,r}(\fq):
C\mbox{ is $\fq$-reducible}\}=\bigcup_{1\le k\le d/2}
\mu_{k,d,r}\big(\mathcal{C}_{k,r}(\fq) \times
\mathcal{C}_{d-k,r}(\fq)\big).$$
From (\ref{eq:upper-bound-number-rat-points}) we conclude that
\begin{gather*}
  \begin{aligned}
    \# \mathcal{R}_{d,r}^{(q)}(\fq)
    &\le \sum_{1\le k\le d/2} \# \mathcal{C}_{k,r}(\fq) \cdot \#
    \mathcal{C}_{d-k,r}(\fq)
    \\
    &\le \sum_{1\le k\le d/2} \deg\mathcal{C}_{k,r} \cdot 2q^{b_{k,r}}
    \cdot \deg\mathcal{C}_{d-k,r} \cdot 2q^{b_{d-k,r}},
  \end{aligned}\end{gather*}
with $b_{d,r} = \dim \mathcal{C}_{d,r}$. If $d \geq 4r-8 \geq 4$, then
Theorem \ref{th:dimension-variety-red-curves} implies that
\begin{equation}
  \# \mathcal{R}_{d,r}^{(q)}(\fq)\le\left\{\begin{array}{ll}
      4 \sum_{1\le k\le d/2}
      ~ \deg\mathcal{C}_{k,r}\deg\mathcal{C}_{d-k,r}
      \,q^{b_{d,r}-r+2}&  \text{if }\ d=4r-8, \\
      4\sum_{1\le k\le d/2}
      ~ \deg\mathcal{C}_{k,r}\deg\mathcal{C}_{d-k,r}
      \,q^{b_{d,r}-(d-2r+3)}  & \text{otherwise}.
    \end{array}\right.
  \label{eq:expression-upper-bound-red-curves}\end{equation}
%
%
\subsection{An upper bound on the degree of the restricted Chow
  variety $\widetilde{\mathcal{C}}_{d,r}$}
The inequality (\ref{eq:expression-upper-bound-red-curves}) shows
that an upper bound on the number of $\fq$-reducible cycles in
$\Pp^r$ of dimension 1 and degree $d$ can be deduced from an upper
bound on the degree of the Chow variety $\mathcal{C}_{d,r}$ of
curves over $\overline{\mathbb{F}}_{q}$ of degree $d$ in $\Pp^r$. In
order to obtain an upper bound on the latter, we consider a suitable
variant of the approach of Koll\'ar \cite[Exercise I.3.28]{Kollar99}
(see also \cite{Guerra99}).

With the terminology of \cite{EiHa92}, we shall consider the {\em
  restricted Chow variety} $\widetilde{\mathcal{C}}_{d,r}$ of curves
of degree $d$ of $\Pp^r$, namely the union of the irreducible
components of $\mathcal{C}_{d,r}$ whose generic point corresponds to
a nondegenerate absolutely irreducible curve of $\Pp^r$.

Our purpose is to obtain an upper bound on the degree of
$\widetilde{\mathcal{C}}_{d,r}$, from which an upper bound on the
degree of ${\mathcal{C}}_{d,r}$ is readily obtained.
%
%
\subsubsection{An incidence variety related to
  $\widetilde{\mathcal{C}}_{d,r}$}
Let $\Pp^N$ denote the projective space of sequences
$\boldsymbol{f}=(f_0\klk f_r)$ of homogeneous polynomials in
$\cfq[X_0\klk X_r]$ of degree $d$, and consider the incidence
variety
\begin{equation}\label{eq:definition-incidence-variety}
  \Gamma\defineAs \Gamma_{d,r}\defineAs \{(\boldsymbol{f},[F])\in\Pp^N\times
  \widetilde{\mathcal{C}}_{d,r}: V(\boldsymbol{f})\supset \mathrm{supp}(F)\}.
\end{equation}
In this section we obtain an upper bound on the degree of $\Gamma$.

Let $\theta:\Gamma\to\Pp^N$ and $\phi:\Gamma\to
\widetilde{\mathcal{C}}_{d,r}$ denote the corresponding projections.
The $\phi$-fiber of a cycle $[F]$ corresponding to a curve $C$
consists of the set of sequences $\boldsymbol{f}=(f_0\klk f_r)$
vanishing on $C$. On the other hand, it is clear that the image
$\theta(\Gamma)$ is contained in the Zariski closed subset
$\mathcal{U}$ of $\Pp^N$ defined by
$$\mathcal{U}\defineAs \{\boldsymbol{f}\in\Pp^N:\dim V(f)\ge 1\}.$$

According to \cite[Exercise I.3.28]{Kollar99}, the following facts
hold:
\begin{enumerate}
\item[(1)] $\mathcal{U}$ is a closed subset of $\Pp^N$,
\item[(2)] $\mathcal{U}$ can be defined by polynomials of degree
  $\binom{rd+d}{r}$.
\end{enumerate}

Let $\mathcal{T}$ be an absolutely irreducible component of
$\Gamma$. We have the following assertions (see \cite[Proposition
2.4]{Guerra99}):
\begin{enumerate}
\item[(3)] $\theta(\mathcal{T})$ is an absolutely irreducible
  component of $\mathcal{U}$,
\item[(4)] $\theta(\mathcal{T})$ has codimension at most
  $(r+1)(d^2+1)$ in $\Pp^N$,
\item[(5)] $\theta|_{\mathcal{T}}:\mathcal{T}\to \theta(\mathcal{T})$
  is a birational map.
\end{enumerate}
\begin{lemma}\label{lemma:estimate-degree-gamma-theta}
  We have the estimate
  $$\deg \theta(\Gamma)\le \binom{rd+d}{r}^{(r+1)(d^2+1)}.$$
  %
\end{lemma}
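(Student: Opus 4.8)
The plan is to realize $\theta(\Gamma)$ as a union of irreducible components of $\mathcal{U}$ of controlled codimension, and then to bound the degree of this union by a direct application of Lemma \ref{lemma:desigualdad-bezout-andreotti}.

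First I would write $\Gamma$ as the union of its (finitely many) absolutely irreducible components $\mathcal{T}$, so that $\theta(\Gamma)=\bigcup_{\mathcal{T}}\theta(\mathcal{T})$. By assertion (3), each $\theta(\mathcal{T})$ is an absolutely irreducible component of $\mathcal{U}$, and by assertion (4) it has codimension at most $s\defineAs (r+1)(d^2+1)$ in $\Pp^N$. Hence $\theta(\Gamma)$ is exactly the union of a subfamily of the irreducible components of $\mathcal{U}$, all of codimension at most $s$. Writing $\mathcal{U}_s$ for the union of all irreducible components of $\mathcal{U}$ of codimension at most $s$, this shows that the components of $\theta(\Gamma)$ form a subcollection of the components of $\mathcal{U}_s$. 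Since the degree of a variety is by definition the sum of the degrees of its irreducible components, it follows at once that $\deg\theta(\Gamma)\le\deg\mathcal{U}_s$.

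Next I would bound $\deg\mathcal{U}_s$. By assertion (2), $\mathcal{U}$ is defined by homogeneous polynomials all of degree $\binom{rd+d}{r}$. Applying Lemma \ref{lemma:desigualdad-bezout-andreotti} in the ambient space $\Pp^N$, with open set $U=\Pp^N$, defining degree $\binom{rd+d}{r}$, and codimension threshold $s=(r+1)(d^2+1)$, I obtain
$$
\deg\mathcal{U}_s=\deg(\Pp^N\cap\mathcal{U}_s)\le\binom{rd+d}{r}^{(r+1)(d^2+1)}.
$$
Combined with the previous paragraph, this yields $\deg\theta(\Gamma)\le\binom{rd+d}{r}^{(r+1)(d^2+1)}$, as claimed.

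Once assertions (1)--(4) are in hand the argument is essentially bookkeeping, so I do not expect a genuine obstacle here: the substantive geometric content (that the images of the components of $\Gamma$ are themselves components of $\mathcal{U}$ of bounded codimension) has already been absorbed into the cited facts. The only two points requiring care are that all defining polynomials of $\mathcal{U}$ share the \emph{same} degree, which is precisely what assertion (2) supplies and what the hypothesis of Lemma \ref{lemma:desigualdad-bezout-andreotti} demands, and that passing from $\theta(\Gamma)$ to $\mathcal{U}_s$ only adds components, so that the degree can only increase.
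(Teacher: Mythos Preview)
Your argument is correct and is essentially identical to the paper's own proof: both use assertions (3)--(4) to see that every irreducible component of $\theta(\Gamma)$ is an irreducible component of $\mathcal{U}$ of codimension at most $s=(r+1)(d^2+1)$, hence $\deg\theta(\Gamma)\le\deg\mathcal{U}_s$, and then apply Lemma~\ref{lemma:desigualdad-bezout-andreotti} with the defining degree $\binom{rd+d}{r}$ from assertion (2). Your write-up is slightly more explicit about why passing to $\mathcal{U}_s$ can only increase the degree, but the substance is the same.
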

\begin{proof} Denote by $\mathcal{U}_{(r+1)(d^2+1)}$ the union of the
  absolutely irreducible components of $\mathcal{U}$ of codimension at
  most $(r+1)(d^2+1)$. According to (3)--(4), all the absolutely
  irreducible components of $\theta(\Gamma)$ are absolutely
  irreducible components of $\mathcal{U}_{(r+1)(d^2+1)}$.  Thus, by
  definition of degree it follows that $\deg\theta(\Gamma)
  \le\deg\mathcal{U}_{(r+1)(d^2+1)}$. By Lemma
  \ref{lemma:desigualdad-bezout-andreotti} we have
  $$\deg\mathcal{U}_{(r+1)(d^2+1)}\le \binom{rd+d}{r}^{(r+1)(d^2+1)},$$
  from which the lemma follows.
\qed\end{proof}

From the proof of \cite[Proposition 2.4]{Guerra99} one deduces that
the points $\boldsymbol{f}\in \theta(\Gamma)$ for which
$V(\boldsymbol{f})$ is an irreducible curve of $\Pp^r$ of degree $d$
form a dense open subset of $\theta(\Gamma)$. Let $\mathcal{V}$ be
the dense open subset of $\theta(\Gamma)$ where the inverse mapping
$\theta^{-1}$ of $\theta$ is well-defined and $V(\boldsymbol{f})$ is
an irreducible curve of degree $d$ for every $\boldsymbol{f}\in
\mathcal{V}$. By the definition of $\mathcal{V}$ it turns out that
$\deg\theta(\Gamma)= \deg\mathcal{V}$ and $\theta^{-1}(\mathcal{V})$
is a dense open subset of $\Gamma$, which in turn implies the
equality $\deg\theta^{-1}(\mathcal{V})=\deg\Gamma$. Furthermore, we
have the identity
\begin{equation}\label{eq:def-prelim-open-subset-incidence}
  \theta^{-1}(\mathcal{V})=\{(\boldsymbol{f},[F])\in\mathcal{V}\times\mathcal{I}_{d,r}:
  V(\boldsymbol{f})\supset\mathrm{supp}(F)\},
\end{equation}
where $\mathcal{I}_{d,r}$ denotes the set of nondegenerate irreducible
curves of $\Pp^r$ of degree $d$.

Denote by $(\cfq[\G])_d\defineAs (\cfq[\G(r-2,r)])_d$ the
$d$--graded piece of the coordinate ring of the Grassmannian
$\G\defineAs \G(r-2,r)$.  Arguing as in Section
\ref{subsubsec:reducible-cycles} we see that the set
$\mathcal{I}((\cfq[\G])_d)$ of irreducible elements of
$(\cfq[\G])_d$ is an open subset of $(\cfq[\G])_d$. If we denote by
$\mathcal{I}_{\ge 1}((\cfq[\G])_d)$ the set of irreducible cycles of
dimension 1 and degree $d$, taking into account that each
$\boldsymbol{f}\in\mathcal{V}$ defines an irreducible curve
$V(\boldsymbol{f})$ of degree $d$, from
(\ref{eq:def-prelim-open-subset-incidence}) and \cite[Corollary
I.3.24.5]{Kollar99} we deduce the identity:
\begin{equation}\label{eq:def-open-subset-incidence}
  \theta^{-1}(\mathcal{V})=\{(\boldsymbol{f},[F])\in\mathcal{V}\times
  \mathcal{I}_{\ge 1}((\cfq[\G])_d):
  V(\boldsymbol{f})\supset\mathrm{supp}(F)\}.
\end{equation}
\begin{proposition}\label{prop:degree-eqs-incidence-var}
  $\theta^{-1}(\mathcal{V})$ is defined in the product
  $\mathcal{V}\times \mathcal{I}_{\ge 1}((\cfq[\G])_d)$ by
  bihomogeneous polynomials of bidegree at most $(d,D_r)$, where
  $D_r\defineAs \binom{d^2+r}{r}$.
\end{proposition}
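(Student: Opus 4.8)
The plan is to turn the geometric containment $V(\boldsymbol{f})\supset\mathrm{supp}(F)$ into explicit bihomogeneous equations by building, for each coordinate $f_i$, a single resultant form in an auxiliary linear form $\boldsymbol u=(u_0\klk u_r)$ whose coefficients are the desired equations. By (\ref{eq:def-open-subset-incidence}), a pair $(\boldsymbol f,[F])$ of $\mathcal V\times\mathcal I_{\ge1}((\cfq[\G])_d)$ lies in $\theta^{-1}(\mathcal V)$ exactly when the curve $C\defineAs\mathrm{supp}(F)$ is contained in each hypersurface $\{f_i=0\}$ for $i=0\klk r$. So it suffices to produce, for a fixed homogeneous $f$ of degree $d$, a family of bihomogeneous polynomials in the coefficients of $f$ and of $F$ that vanishes precisely when $f$ vanishes on $C$, and then take the union over $i=0\klk r$.

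The construction I would use rests on the observation that the Chow form $F$ already computes hyperplane sections of $C$: for a hyperplane $H_{\boldsymbol u}=\{\boldsymbol u\cdot\boldsymbol X=0\}$ the partially evaluated form $\boldsymbol v\mapsto F(\boldsymbol u,\boldsymbol v)$ vanishes exactly on the dual hyperplanes of the points of $C\cap H_{\boldsymbol u}$, hence is (up to scaling) the Chow form of the zero-cycle $C\cap H_{\boldsymbol u}$ of degree $d$. I then define the norm $R(\boldsymbol u)\defineAs\prod_{P\in C\cap H_{\boldsymbol u}}f(P)$. By resultant reciprocity this equals, up to a nonzero factor on the relevant open locus, $\prod_{Q\in C\cap\{f=0\}}(\boldsymbol u\cdot Q)$, a product of $\deg(C\cap\{f=0\})=d^2$ linear forms, so $R$ is a form of degree $d^2$ in $\boldsymbol u$. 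The key equivalence is clean in both directions: if $f$ vanishes on $C$ then every factor $f(P)$ vanishes and $R\equiv0$, while if $f$ does not vanish on $C$ then $C\cap\{f=0\}$ is finite, a generic $H_{\boldsymbol u}$ avoids it, and $R(\boldsymbol u)\ne0$. Thus $f$ vanishes on $C$ iff all $\binom{d^2+r}{r}=D_r$ coefficients of $R$, taken with respect to the degree-$d^2$ monomials in $\boldsymbol u$, vanish.

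It remains to check that these coefficients are bihomogeneous of the claimed bidegree. Homogeneity of degree $d$ in the coefficients of $f$ is immediate, since $R$ is a product of $d$ evaluations $f(P)$, each linear in those coefficients. For the dependence on $F$ I would argue that $R(\boldsymbol u)$ is a symmetric function of the $d$ points of $C\cap H_{\boldsymbol u}$, hence a polynomial in the Chow coordinates of that zero-cycle, that is, in the coefficients of $F(\boldsymbol u,\cdot)$, which are themselves linear in $F$; expressing the norm through these Chow coordinates and the $u$-resultant bounds its degree in $F$ by $D_r$ (a careful bookkeeping of the symmetric-function expansion in fact yields the smaller value $d$, which a fortiori gives the stated bound). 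Collecting the $D_r$ coefficients of $R$ for each $f_i$ produces bihomogeneous equations of bidegree at most $(d,D_r)$ whose common zero locus inside $\mathcal V\times\mathcal I_{\ge1}((\cfq[\G])_d)$ is, by the equivalence above together with (\ref{eq:def-open-subset-incidence}), exactly $\theta^{-1}(\mathcal V)$.

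The hard part is the $F$-bookkeeping in the last step: one must verify that the norm $\prod_{P}f(P)$ genuinely is a polynomial, not merely a rational function, in the coefficients of $F$, with its degree under control. This is exactly where the $u$-resultant formalism and resultant reciprocity do the work, and one has to be careful about the scalar in the reciprocity identity (it is nonvanishing on the dense open set where $H_{\boldsymbol u}$ meets $C$ transversally) and about multiplicities in $C\cap H_{\boldsymbol u}$, which the passage to the reduced Chow form of the cycle handles. A secondary point is the exactness of the cut-out locus: I would confirm that no spurious components are introduced, which follows because the equivalence ``$f$ vanishes on $C$ iff $R\equiv0$'' is a genuine biconditional valid at every point of the product, not only generically.
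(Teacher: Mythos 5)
Your strategy is genuinely different from the paper's. The paper never forms a norm or resultant: it writes down the explicit defining equations $c_{\boldsymbol{\gamma},\boldsymbol{\delta}}(\boldsymbol{a},\boldsymbol{X})$ of $\mathrm{supp}(F)$ (linear in the coefficients $\boldsymbol{a}$ of $F$, of degree $2d$ in $\boldsymbol{X}$), invokes the effective Nullstellensatz-type inclusion $I(C)^d\subset(c_{\boldsymbol{\gamma},\boldsymbol{\delta}})$ from Amoroso and Koll\'ar to convert the containment $V(\boldsymbol{f})\supset\mathrm{supp}(F)$ into the ideal memberships $f_0^{i_0}\cdots f_r^{i_r}\in(c_{\boldsymbol{\gamma},\boldsymbol{\delta}})$, and then expresses each membership as the solvability of a linear system, i.e.\ as the vanishing of minors of size at most $\binom{d^2+r}{r}+1$. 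The bidegree $(d,D_r)$ falls out of the shape of those minors: one column of degree $d$ in the coefficients of $\boldsymbol{f}$, and at most $D_r$ columns each linear in $\boldsymbol{a}$. Your route, by contrast, is the classical elimination-theoretic one, and if it could be completed it would even give the sharper bidegree $(d,d)$.

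The genuine gap is exactly the step you flag as ``the hard part,'' and it is not a bookkeeping detail but the entire content of the statement: you must prove that $R(\boldsymbol{u})$ is a \emph{polynomial} in the coefficients of $F$ of controlled degree. As written, $\prod_{P\in C\cap H_{\boldsymbol{u}}}f(P)$ is not even well defined (each $f(P)$ depends on the choice of homogeneous coordinates of $P$, so a normalizing factor built from $F(\boldsymbol{u},\cdot)$ must be inserted), and your justification for polynomiality --- that $R$ is a symmetric function of the points of $C\cap H_{\boldsymbol{u}}$ and hence a polynomial in the coefficients of $F(\boldsymbol{u},\cdot)$ --- is precisely the assertion that fails to be automatic here. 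Over a field of positive characteristic (the setting of this paper) the ring of multisymmetric functions in several vector variables is \emph{not} generated by the elementary multisymmetric functions, i.e.\ by the coefficients of the Chow form of the zero-cycle, so the ``fundamental theorem of symmetric functions'' step has no general validity; moreover, for $r\ge2$ the Chow variety of zero-cycles is a proper subvariety of the space of degree-$d$ forms in $\boldsymbol{v}$, so even when the norm is a regular function on that subvariety one still has to show it is the restriction of a polynomial of degree $d$ (or of any explicitly bounded degree) on the ambient coefficient space. The same difficulty is hidden in your appeal to ``resultant reciprocity'': the identity $\prod_P f(P)=c\prod_Q(\boldsymbol{u}\cdot Q)$ identifies $R$ with the Chow form of the intersection cycle $C\cdot\{f=0\}$, and the statement that this Chow form depends polynomially, with bounded degree, on the coefficients of $F$ and $f$ is a theorem of the same depth as the one being proved. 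Without an independent proof of this polynomiality and degree bound, the argument is circular at its core; the paper's detour through $I(C)^d\subset(c_{\boldsymbol{\gamma},\boldsymbol{\delta}})$ and linear algebra exists precisely to avoid it.
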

\begin{proof} Let $\boldsymbol{f}\in\mathcal{V}$, let $C\defineAs V(\boldsymbol{f})$ and let
  $[F]$ be an element of $\mathcal{I}((\cfq[\G])_d)$. Then
  \cite[Corollary I.3.24.5]{Kollar99} shows that $\mathrm{supp}(F)$ is
  an irreducible curve of $\Pp^r$ of degree $d$. Assume without loss
  of generality that $\mathrm{supp}(F)$ has no components contained in
  the hyperplane $\{X_0=0\}$ at infinity. By
  (\ref{eq:support-of-a-chow-form}) it follows that
  $$\mathrm{supp}(F)=\{\boldsymbol{x}\in\Pp^r:\pi^{-1}(\boldsymbol{x})\subset\eta^{-1}(V_{F})\}.$$
  Let $A_0\klk A_r,\!B_0\klk\!B_r$ be new indeterminates, let $\boldsymbol{A}\defineAs
  (A_0\klk A_r)$, $\boldsymbol{B}\defineAs (B_0\klk B_r)$ and $\boldsymbol{X}\defineAs (X_0\klk
  X_r)$, and let $F\defineAs
  \sum_{|\boldsymbol{\alpha}|=|\boldsymbol{\beta}|=d}a_{\boldsymbol{\alpha},\boldsymbol{\beta}}
  \boldsymbol{A}^{\boldsymbol{\alpha}} \boldsymbol{B}^{\boldsymbol{\beta}}$.  Then
  $$\pi^{-1}(\boldsymbol{x})=\{(\boldsymbol{x},H_1,H_2):A_0x_0\plp A_rx_r=B_0x_0\plp
  B_rx_r=0\}.$$
  Since $\mathrm{supp}(F)$ has no components at infinity, we see that
  the condition $\pi^{-1}(\boldsymbol{x})\subset\eta^{-1}(V_{F})$ is equivalent to
  the identity
  \begin{equation}\label{eq:equations-for-a-chow-form}
    F\left(-\sum_{i=1}^rA_iX_i,A_1X_0\klk A_rX_0,
    -\sum_{i=1}^rB_iX_i,B_1X_0\klk B_rX_0\right)=0.
  \end{equation}
  Denote by $\widehat{F}(\boldsymbol{A},\boldsymbol{B},\boldsymbol{X})$ the polynomial in the left-hand side
  of the previous expression and write
  $$\widehat{F}(\boldsymbol{A},\boldsymbol{B},\boldsymbol{X})
  \defineAs \!\sum_{\boldsymbol{\gamma},\boldsymbol{\delta}}
  c_{\boldsymbol{\gamma},\boldsymbol{\delta}}(\boldsymbol{a},\!\boldsymbol{X})
  \boldsymbol{A}^{\boldsymbol{\gamma}}\boldsymbol{B}^{\boldsymbol{\delta}},$$
  where $\boldsymbol{a}\defineAs (a_{\boldsymbol{\alpha},
  \boldsymbol{\beta}})_{\boldsymbol{\alpha},\boldsymbol{\beta}}$ is the vector
  of coefficients of $F$ and $\boldsymbol{\gamma},\boldsymbol{\delta}$ run through the set of
  elements $\boldsymbol{\nu}\in(\Z_{\ge 0})^r$ with $|\boldsymbol{\nu}|=d$. Then
  (\ref{eq:equations-for-a-chow-form}) is equivalent to the following
  defining system of $\mathrm{supp}(F)$ in $\Pp^r$:
  \begin{equation}\label{eq:equations-for-a-chow-form-bis}
    \{c_{\boldsymbol{\gamma},\boldsymbol{\delta}}(\boldsymbol{a},\boldsymbol{X})=0:
    |\boldsymbol{\gamma}|=|\boldsymbol{\delta}|=d\}.
  \end{equation}
  Observe that each polynomial $c_{\boldsymbol{\gamma},\boldsymbol{\delta}}(\boldsymbol{a},\boldsymbol{X})$ is
  bihomogeneous of degree 1 in $\boldsymbol{a}$ and degree $2d$ in $\boldsymbol{X}$.

  If the inclusion $V(\boldsymbol{f})\supset \mathrm{supp}(F)$ is fulfilled then
  the ideal $(f_0\klk f_r)$ generated by $f_0\klk f_r$ is included in
  the ideal $I(C)$ of the curve $C\defineAs \mathrm{supp}(F)$. The
  latter condition implies the corresponding inclusion $(f_0\klk
  f_r)^d\subset I(C)^d$ of $d$th powers of both ideals.  According to
  \cite{Amoroso94} or \cite{Kollar99a}, the inclusion of ideals
  $I(C)^d\subset(c_{\boldsymbol{\gamma},\boldsymbol{\delta}}:
  \boldsymbol{\gamma},\boldsymbol{\delta})$ holds.  We conclude
  that the inclusion $V(\boldsymbol{f})\supset \mathrm{supp}(F)$ implies
  \begin{equation}\label{eq:condition-eqs-incidence-variety}
    (f_0\klk f_r)^d\subset(c_{\boldsymbol{\gamma},\boldsymbol{\delta}}:
    \boldsymbol{\gamma},\boldsymbol{\delta}).
  \end{equation}
  On the other hand, the converse assertion is easily established by
  observing that (\ref{eq:condition-eqs-incidence-variety}) implies
  the corresponding inclusion of radical ideals, which in turn implies
  $V(\boldsymbol{f})\supset \mathrm{supp}(F)$. As a consequence, for elements
  $[F]\in \mathcal{I}_{\ge 1}((\cfq[\G])_d)$ we see that
  (\ref{eq:condition-eqs-incidence-variety}) is equivalent to the
  inclusion $V(\boldsymbol{f})\supset \mathrm{supp}(F)$.

  The inclusion (\ref{eq:condition-eqs-incidence-variety}) is
  equivalent to the membership of all products $f_0^{i_0}\cdots
  f_r^{i_r}$ with $i_0\plp i_r=d$ in the ideal
  $(c_{\boldsymbol{\gamma},\boldsymbol{\delta}}:\boldsymbol{\gamma},
  \boldsymbol{\delta})$.  Fix $(i_0\klk i_r)\in\Z_{\ge
    0}^{r+1}$ with $i_0\plp i_r=d$.  Then $f_0^{i_0}\cdots
  f_r^{i_r}\in(c_{\boldsymbol{\gamma},\boldsymbol{\delta}}:\boldsymbol{\gamma},
  \boldsymbol{\delta})$ if and only if there
  exist homogeneous polynomials $h_{\boldsymbol{\gamma},\boldsymbol{\delta}}
  \in\cfq[X_0\klk X_r]$ of degree $d^2-2d$ for all $|\boldsymbol{\gamma}|=
  |\boldsymbol{\delta}|=d$ with
  \begin{equation}\label{eq:ideal-membership-incidence-var}
    f_0^{i_0}\cdots f_r^{i_r}=\sum_{\boldsymbol{\gamma},\boldsymbol{\delta}}
    h_{\boldsymbol{\gamma},\boldsymbol{\delta}}\,c_{\boldsymbol{\gamma},\boldsymbol{\delta}}.
  \end{equation}
  Equating the corresponding coefficients at both sides of
  (\ref{eq:ideal-membership-incidence-var}) we can reexpress
  (\ref{eq:ideal-membership-incidence-var}) as a linear system with
  the coefficients of the polynomials $h_{\boldsymbol{\gamma},\boldsymbol{\delta}}$ as
  indeterminates.  The number of equations of this system equals
  the number of coefficients of the polynomials on both sides of
  (\ref{eq:ideal-membership-incidence-var}). These are homogeneous
  polynomials of degree $d^2$ in $r$ indeterminates, having thus at
  most $\binom{d^2+r}{r}$ nonzero coefficients. Then we have at most
  $\binom{d^2+r}{r}$ equations. On the other hand, the number of
  unknowns is equal to the number of coefficients of all the
  polynomials $h_{\boldsymbol{\gamma},\boldsymbol{\delta}}$, namely
  $\binom{d^2-2d+r}{r}\binom{d+r}{r}{}^2$. We also remark that the coefficients of the
  matrix of this system are linear combinations of the coefficients
  $\boldsymbol{a}\defineAs (a_{\boldsymbol{\alpha},
  \boldsymbol{\beta}})_{\boldsymbol{\alpha},\boldsymbol{\beta}}$ of $F$.

  The existence of solutions of
  (\ref{eq:ideal-membership-incidence-var}) is equivalent to the
  identity of the ranks of the coefficient matrix and the extended
  coefficient matrix of
  (\ref{eq:ideal-membership-incidence-var}). Since these two matrices
  have rank at most $\binom{d^2+r}{r}$, the existence of solutions of
  (\ref{eq:ideal-membership-incidence-var}) is equivalent to the
  vanishing of the determinant of certain minors of size at most
  $\big(\binom{d^2+r}{r}+1\times\binom{d^2+r}{r}+1\big)$. The entries
  of such minors consist of linear combinations of the coefficients of
  the product $f_0^{i_0}\cdots f_r^{i_r}$ in their last column and
  linear combinations of the coefficients of the polynomials
  $c_{\boldsymbol{\gamma},\boldsymbol{\delta}}$ in the remaining columns. It follows that their
  determinants are bihomogeneous polynomials of degree
  $\binom{d^2+r}{r}$ in the vector $\boldsymbol{a}\defineAs (a_{\boldsymbol{\alpha},
    \boldsymbol{\beta}})_{\boldsymbol{\alpha},\boldsymbol{\beta}}$
  of coefficients of $F$ and degree $d$ in the coefficients of the
  polynomials $f_0\klk f_r$. This finishes the proof of the
  proposition.
\qed\end{proof}

Now we are in position to obtain an upper bound on the degree of the
incidence variety $\Gamma$ from
(\ref{eq:definition-incidence-variety}).
\begin{theorem}\label{th:degree-incidence-variety}
  The following upper bound holds for $d\ge r\ge 3$:
  $$\deg\Gamma\le (ed)^{r(r+1)(d^2+1)+3rg_{d,r}},$$
  where $e$ denotes the basis of the natural logarithm and
  \begin{equation}\label{eq:dim-homogeneous-piece-grassmannian}
    g_{d,r}\defineAs \binom{r+d-2}{d}^2 \cdot \frac{r+d-1}{(r-1)(d+1)}.
  \end{equation}

\end{theorem}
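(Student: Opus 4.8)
The plan is to leverage the bound on $\deg\theta(\Gamma)$ from Lemma \ref{lemma:estimate-degree-gamma-theta} together with the bidegree of the incidence equations in Proposition \ref{prop:degree-eqs-incidence-var}, using crucially that $\theta$ is birational onto its image. Recall that $\deg\Gamma=\deg\theta^{-1}(\mathcal V)$ and, by \eqref{eq:def-open-subset-incidence}, that $\theta^{-1}(\mathcal V)$ lies in $\mathcal V\times\mathcal I_{\ge 1}((\cfq[\G])_d)\subset\Pp^N\times\Pp^{g_{d,r}-1}$, cut out there by bihomogeneous forms of bidegree $(d,D_r)$. I would pass to the Segre embedding of $\Pp^N\times\Pp^{g_{d,r}-1}$ and decompose the degree into multidegrees. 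Writing $D\defineAs\dim\Gamma=\dim\theta(\Gamma)$, letting $H_1,H_2$ be the hyperplane classes of the two factors, and setting $\delta_{a,b}\defineAs\int_\Gamma H_1^aH_2^b$,
$$\deg\Gamma=\sum_{b=0}^{D}\binom{D}{b}\,\delta_{D-b,b}.$$

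The decisive observation is that $\delta_{D-b,b}=0$ for $b\ge g_{d,r}$, since $g_{d,r}$ generic hyperplanes of $\Pp^{g_{d,r}-1}$ meet in the empty set; hence only the $g_{d,r}$ indices $0\le b\le g_{d,r}-1$ contribute. I would bound each surviving $\delta_{D-b,b}$ by first cutting $\theta(\Gamma)$ with $D-b$ generic hyperplanes to obtain a section $S$ of dimension $b$ and degree at most $\deg\theta(\Gamma)$, and then intersecting with the $b$ remaining cuts taken from the second factor. Because $\theta$ is birational onto its image (property (5) above), those second-factor cuts descend, via the map $\boldsymbol f\mapsto F(\boldsymbol f)$ sending $\boldsymbol f$ to the Chow form of $V(\boldsymbol f)$, to hypersurfaces on $S$ of degree $e$, where $e$ is the degree of this Chow-form map. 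The B\'ezout inequality \eqref{eq:Bezout} then yields $\delta_{D-b,b}\le\deg\theta(\Gamma)\cdot e^{\,b}$, whence
$$\deg\Gamma\le\deg\theta(\Gamma)\sum_{b=0}^{g_{d,r}-1}\binom{D}{b}e^{\,b}.$$

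To reach the stated closed form I would use $\binom{D}{b}\le N^{b}$ (as $D\le N$), insert $\deg\theta(\Gamma)\le\binom{rd+d}{r}^{(r+1)(d^2+1)}$ from Lemma \ref{lemma:estimate-degree-gamma-theta}, and convert every binomial with $\binom{n}{k}\le(en/k)^k$. A Stirling-type computation gives $\binom{rd+d}{r}\le(ed)^{r}$ for $r\ge 3$, so the factor $\deg\theta(\Gamma)$ contributes exactly $(ed)^{r(r+1)(d^2+1)}$; and since $N<(ed)^{r+1}$ while $e$ is at most a fixed power of $d$, the product $Ne$ is a bounded power of $ed$ whose exponent stays below $3r$ for $r\ge 3$, so the vertical sum over the $g_{d,r}$ indices is absorbed into $(ed)^{3rg_{d,r}}$.

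The main obstacle is the second paragraph: making the multidegree bound rigorous. One must justify the vanishing beyond $b=g_{d,r}-1$, verify that the generic linear cuts meet $\Gamma$ properly so that $\theta$ remains birational—hence degree-preserving—on each section, and, most delicately, produce a clean bound $e\le d^{O(1)}$ for the degree of the Chow-form map $\boldsymbol f\mapsto F(\boldsymbol f)$. This last quantity is the one datum not handed to us directly by Proposition \ref{prop:degree-eqs-incidence-var}, whose bidegree controls the defining equations but not a priori the degree of the induced map; here one would use the explicit description of $F$ through the coefficients $c_{\boldsymbol\gamma,\boldsymbol\delta}(\boldsymbol a,\boldsymbol X)$ appearing in that proof, together with Lemma \ref{lemma:bound-deg-morphism}. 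Once $e$ and $N$ are pinned to fixed powers of $ed$, the remaining estimates are routine, the generous exponent $3rg_{d,r}$ being tailored precisely to swallow these crude bounds.
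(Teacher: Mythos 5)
Your strategy is genuinely different from the paper's, but it contains a gap that you yourself flag and do not close: the degree $e$ of the rational inverse of $\theta$ (composed with the projection to the Chow coordinates) is never bounded, and nothing in the paper's toolkit supplies it. Proposition \ref{prop:degree-eqs-incidence-var} controls the bidegree of equations cutting out $\theta^{-1}(\mathcal V)$ inside the product; it says nothing about polynomials representing the map $\boldsymbol f\mapsto[F]$. The coefficients $c_{\boldsymbol\gamma,\boldsymbol\delta}(\boldsymbol a,\boldsymbol X)$ go in the wrong direction (they produce defining equations of $\mathrm{supp}(F)$ from the coefficients of $F$, not $F$ from $\boldsymbol f$), and Lemma \ref{lemma:bound-deg-morphism} bounds the degree of an image under a morphism given by \emph{known} forms---it does not produce such forms for the inverse of a birational map. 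Constructing the Chow form of $V(\boldsymbol f)$ from $f_0,\ldots,f_r$ is an elimination/resultant computation whose degree in the coefficients of the $f_i$ would have to be established from scratch; until it is, the inequality $\delta_{D-b,b}\le\deg\theta(\Gamma)\cdot e^{\,b}$, and hence your entire multidegree sum, is not a proof. There are secondary rigor issues in the same step (behaviour of the generic sections on the locus where $\theta$ fails to be injective, the base locus of the rational inverse), which you acknowledge but likewise leave open.

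The paper avoids the inverse map altogether. It intersects $\Gamma$ with $\mathcal V\times W_{g_{d,r}}$, where $W_c$ denotes the union of the components of $\mathcal I_{\ge1}((\cfq[\G])_d)$ of codimension at most $c$; Koll\'ar's explicit equations of degree $\binom{2dr+d}{r}$ for the locus of cycles of dimension at least $1$, fed into Lemma \ref{lemma:desigualdad-bezout-andreotti}, give $\deg W_c\le\binom{2dr+d}{r}^{c}$. Since the fiber of the second projection of $\Gamma_c$ over a generic point of $W_c$ is a single point, $\dim\Gamma_c=\dim W_c$ and the codimension of $\Gamma_c$ in $\mathcal V\times W_c$ is $\dim\mathcal V\le g_{d,r}$; one more application of Lemma \ref{lemma:desigualdad-bezout-andreotti}, now with the total degree $d+D_r$ of the bihomogeneous equations from Proposition \ref{prop:degree-eqs-incidence-var}, yields $\deg\Gamma\le\deg\mathcal V\cdot\deg W_{g_{d,r}}\cdot(d+D_r)^{g_{d,r}}$, after which the binomial estimates are essentially those of your last paragraph. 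If you want to salvage the multidegree route you must supply an independent explicit bound on the degree of the Chow-form construction; the product trick is precisely what lets the paper sidestep that.
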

\begin{proof}
  By our previous remarks it turns out that the degree of the
  incidence variety $\Gamma$ equals the degree of the open dense
  subset $\theta^{-1}(\mathcal{V})$ of $\Gamma$.

  Let $\G\defineAs \G(r-2,2)$ and let $\mathcal{I}((\cfq[\G])_d)$ be
  the open subset of $(\cfq[\G])_d$ corresponding to irreducible
  cycles.  According to \cite[Exercise I.3.28.6]{Kollar99}, the set
  $\mathcal{I}_{\ge 1}((\cfq[\G])_d)$ of cycles of
  $\mathcal{I}((\cfq[\G])_d)$ of dimension 1 is a closed subset of
  $\mathcal{I}((\cfq[\G])_d)$ which is described by equations in the
  coefficients $(a_{\boldsymbol{\alpha},\boldsymbol{\beta}})_{\boldsymbol{\alpha},\boldsymbol{\beta}}$ of the cycles of
  degree $\binom{2dr+d}{r}$. From Lemma
  \ref{lemma:desigualdad-bezout-andreotti} it follows that the union
  $W_c$ of the irreducible components of $\mathcal{I}_{\ge
    1}((\cfq[\G])_d)$ of codimension at most $c$ in
  $\mathcal{I}((\cfq[\G])_d)$ has degree bounded by $\binom{2dr+d}{r}{}^c$.

  Fix an integer $c\ge 0$ and consider the restriction
  $\theta|_{\Gamma_c}$ of $\theta$ to the (nonempty) incidence variety
  $\Gamma_c\defineAs \Gamma\cap(\mathcal{V}\times
  W_c)=\theta^{-1}(\mathcal{V})\cap(\mathcal{V}\times W_c)$. To a
  generic cycle $[F]$ of an irreducible component of $W_c$ corresponds
  a unique $\boldsymbol{f}\in\mathcal{V}$ such that $(\boldsymbol{f},[F])\in\Gamma_c$
  holds. This shows that $\dim\Gamma_c=\dim W_c$.

  Denote $g_{d,r}\defineAs \dim (\cfq[\G])_d$ and observe that the
  following identity holds:
  $$\dim (\cfq[\G])_d =\binom{r+d-2}{d}^2-
  \binom{r+d-2}{d-1}\binom{r+d-2}{d+1} \defineAs g_{d,r}$$
  (see, e.g., \cite{GhKr04}). Then we have the following upper bound
  on $\mathrm{codim}_{\mathcal{V}\times W_c}\Gamma_c$:
  $$
  \mathrm{codim}_{\mathcal{V}\times W_c}\Gamma_c=\dim\mathcal{V}+\dim
  W_c-\dim\Gamma_c=\dim\mathcal{V}\le g_{d,r}.
  $$
  %
  Proposition \ref{prop:degree-eqs-incidence-var} says that
  $\theta^{-1}(\mathcal{V})$ is defined in the product
  $\mathcal{V}\times \mathcal{I}_{\ge 1}(\cfq[\G])_d$ by equations of
  bidegree at most $(d,D_r)$, where $D_r\defineAs\binom{d^2+r}{r}$.
  Therefore, from Lemma \ref{lemma:desigualdad-bezout-andreotti} we
  conclude that
  $$\deg\Gamma_c\le
  \deg(\mathcal{V}\times W_c)(d+D_r)^{g_{d,r}}\le
  \deg\mathcal{V}\,\deg W_c\,(d+D_r)^{g_{d,r}}.$$
  By Lemma \ref{lemma:estimate-degree-gamma-theta} we have
  $\deg\mathcal{V}=\deg\theta(\Gamma)\le\binom{rd+d}{r}{}^{(r+1)(d^2+1)}$. As a consequence,
  \begin{equation}\label{eq:aux-proof-upper-bound-deg-incidence}
    \deg\theta^{-1}(\mathcal{V})=
    \deg \Gamma_{g_{d,r}}
    \le \binom{rd+d}{r}^{(r+1)(d^2+1)}\binom{2dr+d}{r}^{g_{d,r}}(d+D_r)^{g_{d,r}}.
  \end{equation}

  First we observe that the inequality $d+D_r\le d+(e(d^2+r)/r)^r\le
  d^{2r}$ holds for $d\ge r\ge 3$. On the other hand, from
  \cite[Theorem 2.6]{Stanica01} we easily deduce the following upper
  bounds:
  $$\binom{rd+d}{r}\le (ed)^r, \quad \binom{2dr+d}{r}\le (2ed)^r.$$
  Combining these inequalities with
  (\ref{eq:aux-proof-upper-bound-deg-incidence}) and Remark
  \ref{rem:expression-d-coeff-hilb-pol-grassmannian} below, the bound
  of the statement of the theorem follows.
\qed\end{proof}

\begin{remark}\label{rem:expression-d-coeff-hilb-pol-grassmannian}
  The following identities hold:
  \begin{align*}
    g_{d,r}& = \binom{r+d-2}{d}^2- \binom{r+d-2}{d-1}\binom{r+d-2}{d+1}\\
    & = \frac{1}{d+1}\binom{d+r-2}{r-2}\binom{d+r-1}{r-1}=\prod_{i=1}^{r-2}\frac{d+r-i-1}{r-i-1}\frac{d+r-i}{r-i}.
  \end{align*}
\end{remark}

As a consequence of this result we obtain an upper bound on the degree
of the union $\widetilde{\mathcal{C}}_{d,r}$ of the absolutely
irreducible components of the Chow variety $\mathcal{C}_{d,r}$
containing an absolutely irreducible nondegenerate curve. Such a bound
is deduced from the facts that $\widetilde{\mathcal{C}}_{d,r}$ is the
image of the linear projection
$\phi:\Gamma\to\widetilde{\mathcal{C}}_{d,r}$ and that the degree does
not increase under linear mappings.
\begin{corollary}\label{coro:degree-restricted-chow}
  For $d\ge r\ge 3$, the following upper bound holds:
  $$\deg \widetilde{\mathcal{C}}_{d,r}\le
  (ed)^{r(r+1)(d^2+1)+3rg_{d,r}},$$
  with $g_{d,r}$ as in (\ref{eq:dim-homogeneous-piece-grassmannian}).
\end{corollary}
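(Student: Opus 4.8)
The plan is to exhibit $\widetilde{\mathcal{C}}_{d,r}$ as the image $\phi(\Gamma)$ of the incidence variety from (\ref{eq:definition-incidence-variety}) under the second projection, and then to combine the principle that a linear map does not raise the degree (which is Lemma \ref{lemma:bound-deg-morphism} applied to forms of degree $1$) with the bound on $\deg\Gamma$ furnished by Theorem \ref{th:degree-incidence-variety}. Concretely, I would show $\deg\widetilde{\mathcal{C}}_{d,r}=\deg\phi(\Gamma)\le\deg\Gamma$ and then quote Theorem \ref{th:degree-incidence-variety}.

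First I would verify that $\phi$ is onto. By construction $\Gamma\subset\Pp^N\times\widetilde{\mathcal{C}}_{d,r}$, and the fiber $\phi^{-1}([F])$ consists of the sequences $\boldsymbol{f}=(f_0\klk f_r)$ of degree-$d$ forms with $V(\boldsymbol{f})\supset\mathrm{supp}(F)$. If $\mathrm{supp}(F)$ is a nondegenerate absolutely irreducible curve $C$ of degree $d$, then there exist degree-$d$ forms vanishing on $C$: the number $\binom{d+r}{r}$ of degree-$d$ monomials in $r+1$ variables grows cubically in $d$ for $r\ge 3$ and exceeds the value at $d$ of the Hilbert polynomial of $C$ (which is $d^2+1-g$), so the degree-$d$ part of the homogeneous ideal of $C$ is nonzero. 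Choosing a nonzero such form and repeating it yields a point $\boldsymbol{f}\in\Pp^N$ with $V(\boldsymbol{f})\supset C$, so the fiber is nonempty. Since the generic point of every irreducible component of $\widetilde{\mathcal{C}}_{d,r}$ corresponds to such a curve, and since $\phi(\Gamma)$ is closed (the image of a projective variety under a regular map), this forces $\phi(\Gamma)=\widetilde{\mathcal{C}}_{d,r}$.

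Next I would bound the degree. The map $\phi$ is the restriction to $\Gamma$ of the projection $\Pp^N\times\Pp\V_{d,r}\to\Pp\V_{d,r}$ onto the second factor. Under the Segre embedding $\Pp^N\times\Pp\V_{d,r}\hookrightarrow\Pp^M$ with coordinates $z_{ij}=x_iw_j$, this projection is computed on the chart $x_{i_0}\ne 0$ by the linear forms $z_{i_0,0}\klk z_{i_0,s}$, where $s+1=\dim\V_{d,r}$, so $\phi$ is the restriction of a linear projection; in the language of Lemma \ref{lemma:bound-deg-morphism} it is given by homogeneous polynomials of degree $1$. Applying that lemma with $d=1$ gives $\deg\phi(\Gamma)\le\deg\Gamma\cdot 1^{\dim\phi(\Gamma)}=\deg\Gamma$. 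Combining the two steps with Theorem \ref{th:degree-incidence-variety} then yields
$$\deg\widetilde{\mathcal{C}}_{d,r}=\deg\phi(\Gamma)\le\deg\Gamma\le(ed)^{r(r+1)(d^2+1)+3rg_{d,r}},$$
which is the asserted bound.

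The step I expect to be the main obstacle is making the ``linear projection'' argument fully rigorous rather than the arithmetic: the second projection is everywhere defined on the product, yet any single tuple of degree-one forms representing it vanishes along $\{x_{i_0}=0\}$, so to invoke Lemma \ref{lemma:bound-deg-morphism} cleanly one must either choose the center of projection generically with respect to $\Gamma$ or fall back on the general fact that projection from a linear center never increases degree. Once that point is settled, and once the surjectivity of $\phi$ verified above is in hand, the corollary is an immediate consequence of Theorem \ref{th:degree-incidence-variety}.
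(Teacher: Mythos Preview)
Your approach is exactly the paper's: it deduces the corollary from the fact that $\widetilde{\mathcal{C}}_{d,r}$ is the image of $\Gamma$ under the linear projection $\phi$, together with the principle that degree does not increase under linear maps, and then invokes Theorem~\ref{th:degree-incidence-variety}. You simply spell out the surjectivity of $\phi$ and the linear-projection step more carefully than the paper does, which records these two facts in a single sentence preceding the statement.
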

%
%
\subsubsection{From an upper bound on the degree of
  $\widetilde{\mathcal{C}}_{d,r}$ to one for $\mathcal{C}_{d,r}$}
Next we obtain an upper bound on the degree of the union
$\widehat{\mathcal{C}}_{d,r}$ of the components of the Chow variety
$\mathcal{C}_{d,r}$ for which the generic point corresponds to an
absolutely irreducible curve. Since we have an upper bound on the
degree of the restricted Chow variety
$\widetilde{\mathcal{C}}_{d,r}$, namely the union of the components
of $\mathcal{C}_{d,r}$ for which the generic point corresponds to a
nondegenerate absolutely irreducible curve, there only remains to
consider the degenerate cases.

Fix $k$ with $2\le k<r$ and consider the union
$\widehat{\mathcal{C}}_{d,r}^{(k)}$ of the absolutely irreducible
components of $\widehat{\mathcal{C}}_{d,r}$ for which the generic
point corresponds to an absolutely irreducible curve spanning a
$k$--dimensional linear subspace of $\Pp^r$. 
%
%
\begin{lemma}\label{lemma:degree-degenerate-cases}
  For $d\ge k\ge 2$ and $r\ge 3$, the following upper bound holds:
  \begin{equation}
    \deg\,  \widehat{\mathcal{C}}_{d,r}^{(k)} \le (d^2+1)^{ \dim
      \widehat{\mathcal{C}}_{d,r}^{(k)}}(k+1)(r-k)\deg
    \,\widetilde{\mathcal{C}}_{d,k}.
  \end{equation}
\end{lemma}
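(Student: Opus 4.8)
The plan is to realize $\widehat{\mathcal{C}}_{d,r}^{(k)}$ as the image of a regular map built from the Grassmannian $\G(k,r)$ of $k$--planes of $\Pp^r$ and the restricted Chow variety $\widetilde{\mathcal{C}}_{d,k}$ of nondegenerate curves of $\Pp^k$, and then to bound its degree through Lemma \ref{lemma:bound-deg-morphism}. The geometric input is the elementary observation that an absolutely irreducible curve $C\subset\Pp^r$ of degree $d$ spanning a $k$--plane $L$ is the same datum as a nondegenerate absolutely irreducible curve of degree $d$ inside $L\cong\Pp^k$, together with $L$ itself; moreover $L$ is recovered from $C$ as its linear span. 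Accordingly I would introduce the parameter variety $Y$ of pairs $(L,C)$ with $L\in\G(k,r)$ and $C\subset L$ a nondegenerate absolutely irreducible curve of degree $d$, which is a fibre bundle over $\G(k,r)$ with fibre isomorphic to $\widetilde{\mathcal{C}}_{d,k}$, together with the morphism $\Phi\colon Y\to\mathcal{C}_{d,r}$ sending $(L,C)$ to the Chow form of $C$ viewed as a degree--$d$ curve of $\Pp^r$.

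First I would check that $\Phi$ has image exactly $\widehat{\mathcal{C}}_{d,r}^{(k)}$ and is birational onto it. Surjectivity is immediate from the correspondence of Theorem \ref{th:correspondence-chow-points-and-curves} and the definition of $\widehat{\mathcal{C}}_{d,r}^{(k)}$, while generic injectivity follows since the span $L$ is determined by a generic $C$, so the fibre of $\Phi$ over such a Chow form is a single point. In particular $\dim Y=\dim\widehat{\mathcal{C}}_{d,r}^{(k)}$, which I abbreviate by $m$.

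Next I would make $\Phi$ explicit in order to read off the degree of its defining forms. A hyperplane of $\Pp^r$ restricts to a hyperplane of $L$, linearly in the coordinates of the hyperplane and in the Pl\"ucker data of $L$; substituting these restrictions into the Chow form of $C$ as a curve of $L\cong\Pp^k$ expresses the Chow form of $C$ in $\Pp^r$ as a polynomial that is linear in the coefficients of the $\Pp^k$--Chow form and of degree at most $d^2+1$ in the remaining parameters. Applying Lemma \ref{lemma:bound-deg-morphism} to $\Phi$ then yields $\deg\widehat{\mathcal{C}}_{d,r}^{(k)}=\deg\Phi(Y)\le \deg Y\cdot (d^2+1)^{m}$.

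It remains to bound $\deg Y$, and this is where the factor $(k+1)(r-k)=\dim\G(k,r)$ enters: using the fibration $Y\to\G(k,r)$ with fibre $\widetilde{\mathcal{C}}_{d,k}$ and a B\'ezout-type estimate in the spirit of Lemma \ref{lemma:desigualdad-bezout-andreotti}, I would prove $\deg Y\le (k+1)(r-k)\,\deg\widetilde{\mathcal{C}}_{d,k}$. Combining the two displayed inequalities gives the assertion. I expect the main obstacle to be precisely this last estimate: one must track degrees through the Pl\"ucker embedding and control the bundle $Y$ invariantly, since the naive parametrization of a $k$--plane by a frame carries a $\mathrm{GL}_{k+1}$ redundancy that must not inflate the degree. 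A secondary point requiring care is the explicit determination of the degree of the forms defining $\Phi$, which underlies the exponent $d^2+1$.
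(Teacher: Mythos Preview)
Your overall plan is the same as the paper's: parametrize $\widehat{\mathcal{C}}_{d,r}^{(k)}$ by pairs (a $k$--plane, a nondegenerate degree--$d$ curve in it), write down the map to $\mathcal{C}_{d,r}$ that assigns the Chow form in $\Pp^r$, check that the image is dense, and invoke Lemma~\ref{lemma:bound-deg-morphism}. Where you diverge from the paper is precisely at the step you flag as the ``main obstacle''.

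The paper does \emph{not} work with the global bundle $Y\to\G(k,r)$. Instead it fixes the standard affine Pl\"ucker chart $U_\Phi\cong\A^{(k+1)(r-k)}$ of $\G(k,r)$: every $k$--plane complementary to a fixed $(r-k-1)$--plane is the row span of a unique matrix $\bigl(\mathrm{Id}_{k+1}\mid (m_{i,j})\bigr)$. Over this chart the bundle trivializes as the product $\widetilde{\mathcal{C}}_{d,k}\times\A^{(k+1)(r-k)}$, and the map $\phi$ is written down explicitly as $([F],(m_{i,j}))\mapsto[F(\mathcal M^{-1}\boldsymbol A,\mathcal M^{-1}\boldsymbol B)]$ for a unipotent $(r+1)\times(r+1)$ matrix $\mathcal M$. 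Since $\mathcal M^{-1}$ is linear in the $m_{i,j}$, the components of $\phi$ have degree at most $d^2+1$, and one checks directly that the image contains a dense open of $\widehat{\mathcal{C}}_{d,r}^{(k)}$. Lemma~\ref{lemma:bound-deg-morphism} then applies to the product, whose degree is controlled by $\deg\widetilde{\mathcal{C}}_{d,k}$ alone.

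This affine--chart trick is exactly what dissolves your obstacle: it removes the $\mathrm{GL}_{k+1}$ redundancy (each plane in the chart carries a canonical frame) and, more importantly, replaces $\G(k,r)$ by affine space, so no degree coming from the Pl\"ucker embedding of the Grassmannian enters. Your proposed inequality $\deg Y\le (k+1)(r-k)\,\deg\widetilde{\mathcal{C}}_{d,k}$, with $Y$ sitting over $\G(k,r)$ in its Pl\"ucker embedding, is not something Lemma~\ref{lemma:desigualdad-bezout-andreotti} will give you; note that already $\deg\G(k,r)$ is far larger than $(k+1)(r-k)$. So the global route, while conceptually cleaner, would not yield the stated bound without a further idea, whereas the paper's local trivialization makes the estimate immediate.
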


\begin{proof}
  First we observe that it is easy to construct a set--theoretic
  bijection
  $$\widetilde{\mathcal{C}}_{d,k} \times \G(k,r)\longleftrightarrow
  \widehat{\mathcal{C}}_{d,r}^{(k)}.
  $$
  Indeed, a curve in $\Pp^k$ can be embedded in $\Pp^r$ and moved to
  any subspace of dimension $k$ using a suitable linear isomorphism.
  Fixing the embedding, a bijection as above is obtained. We claim
  that there exists a dense open subset $U$ of $\G(k,r)$ such that the
  restriction of a set--theoretic bijection as above to
  $\widetilde{\mathcal{C}}_{d,k}\times U$ is given by a polynomial map
  $\phi:\Pp\V_{d,k}\times U \to\Pp\V_{d,r}$ such that
  $\phi(\widetilde{\mathcal{C}}_{d,k}\times U)$ contains a dense open
  subset of $\widehat{\mathcal{C}}_{d,r}^{(k)}$.

  Fix a basis $\{e_0\klk e_r\}$ of $\cfq^{r+1}$, let $\Pp^k$ be
  embedded in $\Pp^r$ as the subspace spanned by the first $k+1$ basis
  vectors $e_0,\dots,e_k$ in $\Pp^r$ and consider the corresponding
  embedding of $\Pp\V_{d,k}$ in $\Pp\V_{d,r}$. If $[F]\in\Pp\V_{d,r}$
  is the Chow form of a cycle $C\in\mathcal{C}_{d,r}$, then $C$ is
  contained in $\Pp^k$ if and only if $[F]$ depends on $A_0,\dots,A_k$
  and $B_0,\dots,B_k$ and not on $A_{k+1},\dots,A_r$ and
  $B_{k+1},\dots,B_r$. Let $\Phi$ be the linear space of $\Pp^r$
  spanned by the last $r-k$ basis vectors $e_{k+1}\klk e_r$ and let
  $U_\Phi$ be the affine open subset of $\G(k,r)$ consisting of the
  subspaces complementary to $\Phi$. Then every $\Lambda\in U_\Phi$ is
  represented as the row space of a unique matrix of the form
  \begin{equation}
    \left(\begin{array}{cccccccc}
        1 & 0 & \dots & 0 & m_{0,1} & m_{0,2}& \dots & m_{0,r-k}\\
        0 & 1 & \dots & 0 & m_{1,1} & m_{1,2}& \dots & m_{1,r-k}\\
        \vdots&   &  &  &   &  & \dots &  \\
        0& 0 & \dots & 1 & m_{k,1} & m_{k,2}& \dots & m_{k,r-k}
      \end{array}\right)
  \end{equation}
  and viceversa. The entries $m_{i,j}$ of the last $r-k$ columns of
  this matrix yield a bijection of $U_\Phi$ with $\A^{(k+1)(r-k)}$,
  and are known as the Pl\"{u}cker coordinates of the Grassmannian
  $\G(k,r)$ (see, e.g., \cite{Harris92}).

  For each $[F]\in\Pp\V_{d,k}$ and $(m_{i,j})\in \A^{(k+1)(r-k)}$, we
  define
  $$\phi([F],(m_{i,j}))(\boldsymbol{A},\boldsymbol{B})\defineAs
  [F(\mathcal{M}^{-1}\boldsymbol{A},\mathcal{M}^{-1}\boldsymbol{B})],$$
  where $\mathcal{M}\in\A^{(r+1)\times(r+1)}$ is the matrix
  \begin{equation}\label{eq:matrix plucker coordinates}
    \mathcal{M}\defineAs \left(\begin{array}{cc}{\rm Id}_{k+1}  & (m_{i,j}) \\
        \mathbf{0} & {\rm Id}_{r-k}
      \end{array}\right),\end{equation}
  $\mathrm{Id}_j$ denotes the identity matrix of $\A^{j\times j}$
  for every $j\in\N$ and $\mathbf{0}$ denotes the zero matrix of
  $\A^{(r-k)\times(k-1)}$. Since the identity
  $$\mathcal{M}^{-1}=\left(\begin{array}{cc}
      \mathrm{Id}_{k+1} & -(m_{i,j}) \\
      \mathbf{0} & {\rm Id}_{r-k}
    \end{array}\right)$$
  holds, we easily conclude that the injection $\phi$ is a regular map
  defined by polynomials of degree at most $d^2+1$.

  If $[F]$ is the Chow form of an irreducible nondegenerate curve $C$
  of $\Pp^k$, then we have that $\phi([F],(m_{i,j}))$ is the Chow form of the curve
  $C_\mathcal{M}\defineAs \{\mathcal{M}\boldsymbol{x}:\boldsymbol{x}\in C\}$.
  Clearly, $C_\mathcal{M}$ is an irreducible
  curve which is nondegenerate in the subspace spanned by the first
  $k+1$ rows of $\mathcal{M}$. This shows that
  $\phi(\widetilde{\mathcal{C}}_{d,k}\times
  \A^{(k+1)(r-k)})\subset\widehat{\mathcal{C}}_{d,r}^{(k)}$ holds.

  Now, let $V_\Phi$ be the open dense subset of
  $\widehat{\mathcal{C}}_{d,r}^{(k)}$ consisting of the forms whose
  support spans a subspace complementary to $\Phi$. For $[G]$ in
  $V_\Phi$, consider the Pl\"ucker coordinates
  $(m_{i,j})\in\A^{(k+1)(r-k)}$ of the subspace spanned by
  $\mathrm{supp}(G)$ and the corresponding matrix $\mathcal{M}$ defined as in
  (\ref{eq:matrix plucker coordinates}). By reversing the argument
  above, it turns out that the polynomial $F(\boldsymbol{A},\boldsymbol{B})
  \defineAs G(\mathcal{M}\boldsymbol{A},\mathcal{M}\boldsymbol{B})$
  depends only on the indeterminates $A_0,\dots,A_k$ and $B_0,\dots,
  B_k$, and hence its support is a nondegenerate curve in $\Pp^k$. We
  conclude that $[F]$ belongs to $\widetilde{\mathcal{C}}_{d,k}$ and
  the Chow form $[G]$ is the image under $\phi$ of the pair
  $([F],(m_{i,j}))$.  It follows that
  $\phi(\widetilde{\mathcal{C}}_{d,k}\times \A^{(k+1)(r-k)})$ contains
  a dense open subset of $\widehat{\mathcal{C}}_{d,r}^{(k)}$, as
  claimed.

  From our claim we deduce that
  $$\deg\phi(\widetilde{\mathcal{C}}_{d,k}\times
  \A^{(k+1)(r-k)})=\deg \widehat{\mathcal{C}}_{d,r}^{(k)}.$$
  Applying Lemma \ref{lemma:bound-deg-morphism}, the estimate of the
  lemma follows.
\qed\end{proof}
\begin{proposition}\label{prop:degree-chow-var-irred-curves}
  For $d\ge 1$ and $r\ge 3$, the following upper bound holds:
  $$\deg\widehat{\mathcal{C}}_{d,r}
  \le 2(ed)^{r(r+1)(d^2+1)+3rg_{d,r}},$$
  where $g_{d,r}$ is defined as in
  (\ref{eq:dim-homogeneous-piece-grassmannian}).
\end{proposition}
\begin{proof}
  First suppose that $d\ge r$. From Fact
  \ref{fact:dimension-chow-var}, Corollary
  \ref{coro:degree-restricted-chow} and Lemma
  \ref{lemma:degree-degenerate-cases} we have
  $$
  \sum_{k=2}^{r-1}\deg\widehat{\mathcal{C}}_{d,r}^{(k)} \le
  (d^2+1)^{2d(r-1) + d(d+3)/2}\sum_{k=2}^{r-1}
  (k+1)(r-k)(ed)^{k(k+1)(d^2+1)+3kg_{d,k}}.
  $$
  By Remark \ref{rem:expression-d-coeff-hilb-pol-grassmannian} we
  easily deduce that the numbers $g_{d,k}$ are increasing functions of
  $k$, which implies
  $$(ed)^{(k+1)k(d^2+1)+3kg_{d,k}}\le
  (ed)^{r(r-1)(d^2+1)+3rg_{d,r}}$$
  for $2\le k\le r-1$. This shows that
  \begin{align*}
    \begin{array}{rcl}
      \displaystyle \sum_{k=2}^{r-1}\deg\widehat{\mathcal{C}}_{d,r}^{(k)}
      &\le&
      (r-2)r^2(d^2+1)^{2d(r-1) + d(d+3)/2}(ed)^{r(r-1)(d^2+1)+3rg_{d,r}}\\
      &\le& (ed)^{r(r+1)(d^2+1)+3rg_{d,r}}.
    \end{array}
  \end{align*}
  Since $\deg\widehat{\mathcal{C}}_{d,r}\le
  \sum_{k=2}^{r-1}\deg\widehat{\mathcal{C}}_{d,r}^{(k)}+
  \deg\widetilde{\mathcal{C}}_{d,r}$, from the previous bound and
  Corollary \ref{coro:degree-restricted-chow} we deduce the statement
  of the proposition for $d\ge r$.

  Next suppose that $2\le d<r$. Since an irreducible nondegenerate
  curve in $\Pp^k$ has degree at least $k$ (see, e.g.,
  \cite[Proposition 18.9]{Harris92}), we conclude that
  $\widehat{\mathcal{C}}_{d,r}^{(k)}$ is empty for $k>d$ and
  $\widetilde{\mathcal{C}}_{d,r}$ is also empty. This implies
  $$\deg\widehat{\mathcal{C}}_{d,r}=\sum_{k=2}^{d}
    \deg\widehat{\mathcal{C}}_{d,r}^{(k)}\le
    \sum_{k=2}^{d} (ed)^{(k+1)r(d^2+1)+3kg_{d,k}}\le
    (d-2)(ed)^{(d+1)r(d^2+1)+3rg_{d,r}}$$
  and proves the proposition in this case.

  Finally, if $d=1$, then $\widehat{\mathcal{C}}_{1,r}=\G(1,r)$.  In
  this case we have an explicit expression for the degree of
  $\G(1,r)$, from which the estimate of the statement follows (see,
  e.g., \cite[Example 19.14]{Harris92}):
  $$\deg\widehat{\mathcal{C}}_{1,r}=
  \frac{(2r-2)!}{(r-1)!\,r!}=\frac{1}{r}\binom{2r-2}{r-1} \le
  (2e)^{r-1}\le 2e^{2r(r+1)+3rg_{1,r}}.$$
  This finishes the proof of the proposition.
\qed\end{proof}
Finally, we obtain an upper bound on the degree of the Chow variety
$\mathcal{C}_{d,r}$ of curves of $\Pp^r$ of degree $d$. We recall the
quantity $g_{d,r}$ from (\ref{eq:dim-homogeneous-piece-grassmannian}).
\begin{theorem}\label{th:upper-bound-deg-chow-var}
  For $d\ge 1$ and $r\ge 3$, we set
  \begin{equation}\label{eq:cdr}
    c_{d,r}= (2ed)^{r(r+1)(d^2+1)+4rg_{d,r}}.
  \end{equation}
  Then $\deg\mathcal{C}_{d,r} \leq c_{d,r}.$
\end{theorem}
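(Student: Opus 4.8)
The plan is to bound $\deg\mathcal{C}_{d,r}$ by summing the degrees of all its irreducible components, grouping them according to the structure of the generic curve they parametrize. The key structural fact is the decomposition \eqref{eq:reducible-curves-1}, namely $\mathcal{R}_{d,r}=\bigcup_{1\le k\le d/2}\mathrm{im}(\mu_{k,d,r})$, which lets me express any reducible cycle as a sum of lower-degree cycles. I would first separate $\mathcal{C}_{d,r}$ into the locus $\widehat{\mathcal{C}}_{d,r}$ of components whose generic point is an absolutely irreducible curve, and the remaining reducible locus $\mathcal{R}_{d,r}$. Proposition \ref{prop:degree-chow-var-irred-curves} already supplies the bound $\deg\widehat{\mathcal{C}}_{d,r}\le 2(ed)^{r(r+1)(d^2+1)+3rg_{d,r}}$ for the irreducible part, so the whole task reduces to controlling the reducible components.

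For the reducible locus, the natural approach is induction on $d$. Since the multiplication map $\mu_{k,d,r}$ is a regular (indeed linear) map on $\mathcal{C}_{k,r}\times\mathcal{C}_{d-k,r}$, Lemma \ref{lemma:bound-deg-morphism} gives $\deg\,\mathrm{im}(\mu_{k,d,r})\le\deg(\mathcal{C}_{k,r}\times\mathcal{C}_{d-k,r})$, and the degree of a product is the product of degrees, so $\deg\,\mathrm{im}(\mu_{k,d,r})\le\deg\mathcal{C}_{k,r}\cdot\deg\mathcal{C}_{d-k,r}$. Summing over $1\le k\le\lfloor d/2\rfloor$ and invoking the inductive bounds $\deg\mathcal{C}_{k,r}\le c_{k,r}$ and $\deg\mathcal{C}_{d-k,r}\le c_{d-k,r}$, I would obtain
$$
\deg\mathcal{R}_{d,r}\le\sum_{1\le k\le d/2}c_{k,r}\,c_{d-k,r}.
$$
Combining with the irreducible part yields
$$
\deg\mathcal{C}_{d,r}\le 2(ed)^{r(r+1)(d^2+1)+3rg_{d,r}}+\sum_{1\le k\le d/2}c_{k,r}\,c_{d-k,r},
$$
and the remaining work is purely arithmetic: I must verify that this sum is dominated by the target quantity $c_{d,r}=(2ed)^{r(r+1)(d^2+1)+4rg_{d,r}}$, using the base case $d=1$ (where $\mathcal{C}_{1,r}=\G(1,r)$ and its degree is computed explicitly as in the last part of Proposition \ref{prop:degree-chow-var-irred-curves}).

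The main obstacle will be the bookkeeping in the final arithmetic estimate, specifically showing that the exponents add up correctly. The extra factor of $4r$ (rather than $3r$) in the exponent $4rg_{d,r}$ of $c_{d,r}$ is precisely the slack needed to absorb both the irreducible contribution and the inductive sum over products; I expect the superadditivity-type inequalities $r(r+1)(k^2+1)+r(r+1)((d-k)^2+1)\le r(r+1)(d^2+1)$ for the quadratic pieces, together with the corresponding inequality $g_{k,r}+g_{d-k,r}\le g_{d,r}$ for the Grassmannian dimensions (which follows from Remark \ref{rem:expression-d-coeff-hilb-pol-grassmannian}, since $g_{d,r}$ grows quadratically in $d$), to guarantee that each product term $c_{k,r}c_{d-k,r}$ is bounded by $c_{d,r}$ with room to spare, so that the $\lfloor d/2\rfloor$ terms and the base factor of $2$ are swallowed by the extra $rg_{d,r}$ in the exponent. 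Care is needed because $g_{d,r}$ is not linear in $d$, so I would check the superadditivity explicitly from the product formula in Remark \ref{rem:expression-d-coeff-hilb-pol-grassmannian} rather than assume it.
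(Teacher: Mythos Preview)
Your inductive strategy is natural, but it has a genuine gap in the step where you bound $\deg\operatorname{im}(\mu_{k,d,r})$. The map $\mu_{k,d,r}$ is \emph{bilinear} (of bidegree $(1,1)$), not linear, and the assertion ``the degree of a product is the product of degrees'' fails for projective varieties: under the Segre embedding one has
\[
\deg(\mathcal{C}_{k,r}\times\mathcal{C}_{d-k,r})=\binom{b_{k,r}+b_{d-k,r}}{b_{k,r}}\deg\mathcal{C}_{k,r}\cdot\deg\mathcal{C}_{d-k,r},
\]
and the Segre map $\Pp^1\times\Pp^1\to\Pp^3$ already shows $\deg\operatorname{im}(\mu)\le\deg X\cdot\deg Y$ can fail. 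Equivalently, if you pass to affine cones (where products do multiply degrees), the map becomes a genuine degree-$2$ map, and Lemma~\ref{lemma:bound-deg-morphism} produces an extra factor $2^{m}$ with $m=\dim\operatorname{im}(\mu_{k,d,r})$. Either way you incur a factor of order $2^{b_{k,r}+b_{d-k,r}}$, which for $k=1$ is roughly $2^{b_{d,r}}\sim 2^{d^2/2}$ (when $r=3$).

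This is not merely a missing constant: once you insert the correct factor, the induction no longer closes. The slack $E_4(d)-E_4(k)-E_4(d-k)$ in the exponent of $c_{d,r}$ is, for $k=1$, only of order $d$ (both the quadratic piece $2k(d-k)-1$ and the difference $g_{d,r}-g_{1,r}-g_{d-1,r}$ are linear in $d$), which cannot absorb a contribution of order $d^2/\log d$ coming from $2^{b_{d,r}}$. The paper's proof avoids exactly this accumulation by \emph{not} proceeding inductively: it decomposes every cycle in one step into its irreducible constituents via the morphisms
\[
\mu_{(\boldsymbol{a},\boldsymbol{d})}:\widehat{\mathcal{C}}_{d_1,r}\times\cdots\times\widehat{\mathcal{C}}_{d_s,r}\to\mathcal{C}_{d,r},\qquad ([F_1],\dots,[F_s])\mapsto\Big[\prod_i F_i^{a_i}\Big],
\]
indexed by all $(\boldsymbol{a},\boldsymbol{d})$ with $\sum a_id_i=d$, and applies Lemma~\ref{lemma:bound-deg-morphism} a single time to each such map. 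This yields one global factor $d^{g_{d,r}}$, and it is \emph{this} factor---not the sum over $k$ or the constant $2$---that the extra $rg_{d,r}$ in the exponent of $c_{d,r}$ is designed to absorb. The combinatorial sum $\sum_{(\boldsymbol{a},\boldsymbol{d})}c_{\boldsymbol{d}}$ is then handled by a separate superadditivity argument on the exponents.
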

\begin{proof}
  Let $(\boldsymbol{a},\boldsymbol{d})\defineAs (a_1\klk a_s,d_1\klk d_s)$ be a vector of
  positive integers with $d_1\ge d_2\ge\cdots\ge d_s$ and $a_1d_1\plp
  a_sd_s=d$, and consider the morphism
  $$\begin{array}{rcl}
    \mu_{(\boldsymbol{a},\boldsymbol{d})}:\ \widehat{\mathcal{C}}_{d_1,r}\times\cdots
    \times\widehat{\mathcal{C}}_{d_s,r}\ &\to&\mathcal{C}_{d,r}\\
    ([F_1],\dots,[F_s])&\mapsto&[\prod_{i=1}^sF_i^{a_i}].
  \end{array}$$
  For $(\boldsymbol{a},\boldsymbol{d})\defineAs (a_1\klk a_s,d_1\klk d_s)$ as before, the
  numbers $s$ and $d_1\plp d_s$ are called the length and the weight
  of $\boldsymbol{d}$ and are denoted by $\ell(\boldsymbol{d})$ and $w(\boldsymbol{d})$, respectively. If
  $\mathcal{D}$ denotes the set of all $(\boldsymbol{a},\boldsymbol{d})$ with $d_1\ge
  d_2\ge\cdots\ge d_{\ell(\boldsymbol{d})}$ and $a_1d_1\plp a_sd_s=d$, then it is
  clear that
  $$\mathcal{C}_{d,r}=\widehat{\mathcal{C}}_{d,r}\cup
  \mathcal{R}_{d,r}=
  \bigcup_{(\boldsymbol{a},\boldsymbol{d})\in\mathcal{D}}\mathrm{im}\,\mu_{(\boldsymbol{a},\boldsymbol{d})}.$$
  Furthermore, since each image $\mathrm{im}\,\mu_{(\boldsymbol{a},\boldsymbol{d})}$ is a closed
  subset of $\mathcal{C}_{d,r}$, we have
  \begin{equation}\label{eq:aux-proof-upper-bound-deg-chow-var}
    \deg\mathcal{C}_{d,r}\le \sum_{(\boldsymbol{a},\boldsymbol{d})\in\mathcal{D}}\deg
    \mathrm{im}\,\mu_{(\boldsymbol{a},\boldsymbol{d})}.
  \end{equation}

  Applying Lemma \ref{lemma:bound-deg-morphism} and Proposition
  \ref{prop:degree-chow-var-irred-curves}, we obtain the following
  inequality:
  $$\deg \mathrm{im}\,\mu_{(\boldsymbol{a},\boldsymbol{d})}\le
  d^{g_{d,r}}\prod_{1 \le i\le \ell(\boldsymbol{d})}\deg
  \widehat{\mathcal{C}}_{d_i,r}\le d^{g_{d,r}}\prod_{1 \le i \le
    \ell(\boldsymbol{d})}2(ed_i)^{r(r+1)(d_i^2+1)+3rg_{d_i,r}}.$$
  Let $c_{\boldsymbol{d}}\defineAs
  \prod_{i=1}^{\ell(\boldsymbol{d})}2(ed_i)^{r(r+1)(d_i^2+1)+3rg_{d_i,r}}$ for any
  $\boldsymbol{d}\defineAs (d_1\klk d_s)$ with $w(\boldsymbol{d})\le d$.
  %
  \begin{claim}
    $\sum_{(\boldsymbol{a},\boldsymbol{d})\in\mathcal{D}} c_{\boldsymbol{d}} \le
    (2ed)^{r(r+1)(d^2+1)+3rg_{d,r}}$.
  \end{claim}
  \begin{poc}
    %
    Define $\widehat{c}_k\defineAs \exp(h(k))$, where $h\colon \R_{\geq 0}\to\R$ is the function
    defined by the identity $\exp(h(x))\defineAs
    2(ex)^{r(r+1)(x^2+1)+3rg_{x,r}}$.  From Remark
    \ref{rem:expression-d-coeff-hilb-pol-grassmannian} we easily conclude
    that $h$ is differentiable and its derivative is increasing. A
    straightforward argument shows the inequality
    $\widehat{c}_k\widehat{c}_{m}\le\widehat{c}_{m+k}$ for arbitrary
    positive integers $k,m$ and $r\ge 3$. Hence, for $\boldsymbol{d}\defineAs (d_1\klk
    d_s)$ with $s\ge 2$,
    $$c_{\boldsymbol{d}}\defineAs \widehat{c}_{d_1}\cdots \widehat{c}_{d_s}\le
    \widehat{c}_{w(\boldsymbol{d})}.$$
    This shows that
    $$\sum_{\{(\boldsymbol{a},\boldsymbol{d}):w(\boldsymbol{d})=m\}}
    c_{(\boldsymbol{a},\boldsymbol{d})}
    \le\#\{(\boldsymbol{a},\boldsymbol{d}):w(\boldsymbol{d})=m\}\cdot \widehat{c}_{m}
    \le 2^{m+d}\,\widehat{c}_m.
    $$
    Taking into account that the expression $2^m\widehat{c}_m$ is
    increasing in $m$, we obtain
    $$
      \sum_{(\boldsymbol{a},\boldsymbol{d})\in\mathcal{D}}\!c_{\boldsymbol{d}}
      =\sum_{m=1}^d\sum_{\{(\boldsymbol{a},\boldsymbol{d}):\,w(\boldsymbol{d})=m\}}
      c_{(\boldsymbol{a},\boldsymbol{d})}\le \sum_{m=1}^d2^{m+d}\widehat{c}_m
       \le d\,2^{2d}\widehat{c}_d  \le (2ed)^{r(r+1)(d^2+1)+3rg_{d,r}}.
    $$
    This proves the claim.
  \end{poc}
  Combining (\ref{eq:aux-proof-upper-bound-deg-chow-var}) with this
  claim proves the theorem.
\qed\end{proof}
%
%
\section{The number of $\fq$-reducible curves}\label{sec:RedCyc}
From Theorem \ref{th:upper-bound-deg-chow-var} we derive an upper
bound on the number of $\fq$-reducible cycles of the Chow variety
$\mathcal{C}_{d,r}$.
\begin{theorem}\label{th:upper-bound-red-curves}
  For $r\ge 3$ and $d\ge \threshold $, the following upper bound
  holds:
  \begin{align*}
    \#\mathcal{R}_{d,r}^{(q)}(\fq)\le
    \begin{cases}
      c_{d,r}q^{b_{d,r}-r+2} & \text{if} \ d=\threshold ,\\
      c_{d,r}q^{b_{d,r}-(d-2r+3)} & \text{otherwise},
    \end{cases}
  \end{align*}
  with $b_{d,r}\defineAs \dim\mathcal{C}_{d,r}$ and $c_{d,r}$ as in
  Fact \ref{fact:dimension-chow-var} and (\ref{eq:cdr}), respectively.
\end{theorem}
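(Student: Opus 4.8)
The plan is to combine the counting inequality (\ref{eq:expression-upper-bound-red-curves}) with the degree bound of Theorem \ref{th:upper-bound-deg-chow-var}. First I note that the power of $q$ occurring in (\ref{eq:expression-upper-bound-red-curves}) is already exactly the one claimed, namely $q^{b_{d,r}-r+2}$ when $d=\threshold$ and $q^{b_{d,r}-(d-2r+3)}$ otherwise. Hence it suffices to bound the numerical coefficient, i.e.\ to show
\begin{equation*}
4\sum_{1\le k\le d/2}\deg\mathcal{C}_{k,r}\,\deg\mathcal{C}_{d-k,r}\le c_{d,r}.
\end{equation*}
Applying Theorem \ref{th:upper-bound-deg-chow-var} to each factor, $\deg\mathcal{C}_{k,r}\le c_{k,r}$, reduces this to the purely numerical inequality $4\sum_{1\le k\le d/2}c_{k,r}\,c_{d-k,r}\le c_{d,r}$, which I would establish by a superadditivity argument analogous to the Claim inside the proof of Theorem \ref{th:upper-bound-deg-chow-var}.

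Writing $c_{k,r}=(2ek)^{E_k}$ with $E_k\defineAs r(r+1)(k^2+1)+4rg_{k,r}$ and inflating the bases (both $k$ and $d-k$ are at most $d$, and all exponents are nonnegative), each summand obeys
\begin{equation*}
c_{k,r}\,c_{d-k,r}\le(2ed)^{E_k+E_{d-k}}=c_{d,r}\cdot(2ed)^{-(E_d-E_k-E_{d-k})}.
\end{equation*}
The crux is a lower bound on the exponent deficit $E_d-E_k-E_{d-k}$. Its quadratic part equals $r(r+1)\bigl(2k(d-k)-1\bigr)\ge r(r+1)(2d-3)$ for $1\le k\le d/2$, while its Grassmannian part is $4r\bigl(g_{d,r}-g_{k,r}-g_{d-k,r}\bigr)$. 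Using Remark \ref{rem:expression-d-coeff-hilb-pol-grassmannian}, I regard $g_{x,r}$ as a polynomial in $x$ of degree $2(r-2)$ with nonnegative coefficients and constant term $g_{0,r}=1$; since $d^{j}\ge k^{j}+(d-k)^{j}$ for every integer $j\ge1$ and every $1\le k\le d-1$, only the constant term creates a loss, whence $g_{d,r}-g_{k,r}-g_{d-k,r}\ge-1$. Combining the two parts gives $E_d-E_k-E_{d-k}\ge r(r+1)(2d-3)-4r$, which for $r\ge3$ and $d\ge\threshold$ is at least $48$.

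With this deficit the remaining bookkeeping is routine: each of the at most $d/2$ summands is bounded by $c_{d,r}(2ed)^{-(E_d-E_k-E_{d-k})}$, and since $4\cdot(d/2)=2d\le 2ed\le(2ed)^{E_d-E_k-E_{d-k}}$ the total is at most $c_{d,r}$. This is precisely the role of the enlarged exponent $4rg_{d,r}$ in the definition (\ref{eq:cdr}) of $c_{d,r}$, as opposed to the tighter $3rg_{d,r}$ obtained in Proposition \ref{prop:degree-chow-var-irred-curves}: the surplus $rg_{d,r}$ absorbs both the factor $4$ and the number of summands. The only delicate point I foresee is the sign accounting in the deficit, namely verifying that the constant-term loss $g_{0,r}=1$ together with the $-1$ coming from comparing $d^2+1$ with $(k^2+1)+((d-k)^2+1)$ is genuinely dominated by the positive gain $2k(d-k)$; this is where the hypothesis $d\ge\threshold$ (rather than merely $d\ge2$) is used, and everything else is elementary.
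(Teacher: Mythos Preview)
Your proof is correct and follows the paper's overall strategy: start from (\ref{eq:expression-upper-bound-red-curves}), bound each $\deg\mathcal{C}_{k,r}$ by $c_{k,r}$ via Theorem \ref{th:upper-bound-deg-chow-var}, and then verify the purely numerical inequality $4\sum_{1\le k\le d/2}c_{k,r}\,c_{d-k,r}\le c_{d,r}$. The only difference is in how that last inequality is checked. The paper argues by convexity: writing $c_{x,r}=\exp(h(x))$, it observes that $h'$ is increasing, so $k\mapsto c_{k,r}c_{d-k,r}$ is decreasing on $[1,d/2]$, whence the whole sum is bounded by $(d/2)\,c_{1,r}c_{d-1,r}$, which is then shown to be at most $c_{d,r}/4$. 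Your base-inflation plus exponent-superadditivity argument is an equally valid and slightly more explicit alternative; the polynomial-with-nonnegative-coefficients observation for $g_{x,r}$ is a clean way to handle the Grassmannian part.

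One small correction to your closing commentary: the hypothesis $d\ge\threshold$ is not what rescues the deficit bound (your inequality $E_d-E_k-E_{d-k}\ge r(r+1)(2d-3)-4r$ is already comfortably positive for any $d\ge 3$, $r\ge 3$). That hypothesis enters earlier, through Theorem \ref{th:dimension-variety-red-curves}, to guarantee that the exponent of $q$ in (\ref{eq:expression-upper-bound-red-curves}) is the one claimed. Likewise, in your own argument it is the quadratic gain $2k(d-k)$, not the surplus $rg_{d,r}$ in the exponent of $c_{d,r}$, that absorbs the factor $4$ and the $d/2$ summands.
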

\begin{proof}
  Let $c_{k,r}\defineAs (2ek)^{r(r+1)(k^2+1)+4rg_{k,r}}$ for $k\in\N$.
  According to (\ref{eq:expression-upper-bound-red-curves}) and
  Theorem \ref{th:upper-bound-deg-chow-var}, we have the inequality
  \begin{equation}\label{eq:aux-proof-upper-bound-red-curves}
    \#\mathcal{R}_{d,r}^{(q)}(\fq)\le\left\{\begin{array}{ll}
        4 \sum_{1 \le k \le d/2} ~ c_{k,r}\,c_{d-k,r}
        q^{b_{d,r}-r+2}& \text{if} \ d=4r-8,\\
        4 \sum_{1 \le k\le d/2}  ~ c_{k,r}\,c_{d-k,r}q^{b_{d,r}-d+2r-3}&
        \text{otherwise}.
      \end{array}\right.
  \end{equation}
  %
  Let $h \colon \R_{\geq 0}\to\R$ be the function defined by the
  identity $\exp(h(x))=(2ex)^{r(r+1)(x^2+1)+4g_{x,r}}$. Arguing as in
  the proof of Theorem \ref{th:upper-bound-deg-chow-var}, we deduce
  that $h$ is differentiable and its derivative is increasing. This
  implies that the function $f_y:[0,y/2]\to\mathbb{R}_{\ge 0}$ defined
  by $f_y(x)\defineAs \exp(h(x))\exp(h(y-x))$ is decreasing for any
  positive real number $y$. It follows that $c_{k,r}\,c_{d-k,r} \le
  c_{1,r}\,c_{d-1,r}$ for $2 \le k \le d/2 $, and hence
  \begin{equation}\label{eq:aux-proof-upper-bound-red-curves2}
    \sum_{1 \le k \le d/2} c_{k,r}\,c_{d-k,r}\le \frac{d}{2}\,c_{1,r}\,c_{d-1,r}
    \le \frac{c_{d,r}}{4} .
  \end{equation}
  Combining (\ref{eq:aux-proof-upper-bound-red-curves}) and
  (\ref{eq:aux-proof-upper-bound-red-curves2}) we easily deduce the
  statement of the theorem.
\qed\end{proof}
In order to obtain bounds on the probability that an $\fq$-curve in
$\Pp^r$ is $\fq$-reducible, we take as a lower bound on the number
of all $\fq$-curves of $\Pp^r$ the number $\#P(d,r)(\fq)$ of plane
$\fq$-curves in $\Pp^r$. Bounds on the number $\#R(d,r)(\fq)$ of
plane $\fq$-reducible curves of $\Pp^r$ are provided by the
estimates for irreducible bivariate and multivariate polynomials of
\cite{Gathen08} and \cite{GaViZi10}.
For the homogeneous case, these estimates imply that the number $\#
R(d,2)$ of $\fq$-reducible curves in $P(d,2)$ is bounded as
\begin{equation}\label{eq:lower_bound_irred_proj_plane_curves}
  \#P(d,2)\cdot(q-3)q^{-d}\le \# R(d,2)\le\#P(d,2)\cdot (q+2)q^{-d}.
\end{equation}

\def\bdr{a}
\begin{lemma}\label{lemma:lower-bound-number-plane-curves}
  Let $r \ge 3$ and $d \geq 4r-8$. Then
  \begin{align*}
    q^{b_{d,r}}=q^{3(r-2)+d(d+3)/2} & \le  \#P(d,r)(\mathbb{F}_{q})\le7q^{b_{d,r}},\\
     \#R(d,r)(\mathbb{F}_{q}) & \le 13q^{b_{d,r}-d+1},\\
    q^{b_{d,r}}  \leq \#\mathcal{C}_{d,r}(\F_{q}) & <
    2c_{d,r}q^{b_{d,r}}.
  \end{align*}
\end{lemma}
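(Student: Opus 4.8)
The plan is to use the description of $P(d,r)$ as a fibre bundle over the Grassmannian $\G(2,r)$ of $2$-planes of $\Pp^r$, with fibre the space $P(d,2)=\Pp^{d(d+3)/2}$ of degree-$d$ curves in a fixed $2$-plane. Every plane $\fq$-curve of degree $d$ lies in at least one $H\in\G(2,r)(\fq)$, and inside $H\cong\Pp^2$ it is a point of $P(d,2)(\fq)$, reducibility being intrinsic (a curve in $H$ is $\fq$-reducible exactly when it is so as a curve of $\Pp^r$). Counting pairs $(H,C)$ therefore gives the over-counting upper bounds $\#P(d,r)(\fq)\le\#\G(2,r)(\fq)\cdot\#P(d,2)(\fq)$ and $\#R(d,r)(\fq)\le\#\G(2,r)(\fq)\cdot\#R(d,2)(\fq)$. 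For the matching lower bound on $\#P(d,r)(\fq)$ I would restrict to \emph{nondegenerate} plane curves, those spanning their $2$-plane: for such a curve $H$ is recovered as its linear span, so $(H,C)\mapsto C$ is a bijection onto them; the degenerate curves in a fixed $H$ are exactly the $p_2=q^2+q+1$ cycles $d\cdot L$ with $L\subset H$ a line, so each fibre contributes $\#P(d,2)(\fq)-p_2$ nondegenerate curves.

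Next I would gather the elementary estimates, all extremal at $q=2$. Writing $p_n=(q^{n+1}-1)/(q-1)$ one has $q^n\le p_n<2q^n$, and $p_n-p_2=\sum_{i=3}^{n}q^i\ge q^n$ for $n\ge3$; here $n=d(d+3)/2\ge3$ since $d\ge 4r-8\ge4$. The number of $\fq$-points of $\G(2,r)$ is the Gaussian binomial $\binom{r+1}{3}_q=\prod_{i=0}^{2}(q^{r+1-i}-1)/(q^{3-i}-1)$, and a factor-by-factor comparison yields
$$q^{3(r-2)}\le\#\G(2,r)(\fq)\le\frac{q^{3(r-2)}}{(1-q^{-1})(1-q^{-2})(1-q^{-3})}\le\tfrac{64}{21}\,q^{3(r-2)}.$$
Finally, from (\ref{eq:lower_bound_irred_proj_plane_curves}) and $\#P(d,2)(\fq)=p_{d(d+3)/2}$ I obtain $\#R(d,2)(\fq)\le p_{d(d+3)/2}(q+2)q^{-d}<4\,q^{d(d+3)/2-d+1}$, using $q+2\le2q$.

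Combining these with $b_{d,r}=3(r-2)+d(d+3)/2$ from Fact \ref{fact:dimension-chow-var}, the upper bound $\#P(d,r)(\fq)<\tfrac{64}{21}q^{3(r-2)}\cdot2q^{d(d+3)/2}=\tfrac{128}{21}q^{b_{d,r}}<7q^{b_{d,r}}$ follows, as does the lower bound $\#P(d,r)(\fq)\ge q^{3(r-2)}(p_{d(d+3)/2}-p_2)\ge q^{b_{d,r}}$ and the estimate $\#R(d,r)(\fq)<\tfrac{64}{21}q^{3(r-2)}\cdot4q^{d(d+3)/2-d+1}=\tfrac{256}{21}q^{b_{d,r}-d+1}<13q^{b_{d,r}-d+1}$. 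For the last line, the lower bound is immediate from $P(d,r)\subseteq\mathcal{C}_{d,r}$, which gives $\#\mathcal{C}_{d,r}(\fq)\ge\#P(d,r)(\fq)\ge q^{b_{d,r}}$; the upper bound comes from applying the point-count (\ref{eq:upper-bound-number-rat-points}) to the $b_{d,r}$-dimensional variety $\mathcal{C}_{d,r}$ together with $\deg\mathcal{C}_{d,r}\le c_{d,r}$ (Theorem \ref{th:upper-bound-deg-chow-var}), so $\#\mathcal{C}_{d,r}(\fq)\le c_{d,r}\,p_{b_{d,r}}<2c_{d,r}q^{b_{d,r}}$.

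The routine part is the arithmetic with geometric series and Gaussian binomials; the genuinely delicate point, and the source of the clean constants $7$ and $13$, is the bookkeeping of the $\G(2,r)$-bundle — confirming that the over-counting is harmless for the upper bounds and that subtracting the $p_2$ degenerate cycles $d\cdot L$ per fibre converts the estimate into an honest bijection for the lower bound.
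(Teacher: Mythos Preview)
Your proof is correct and follows essentially the same approach as the paper: both use the incidence correspondence $\{(C,H)\in P(d,r)\times\G(2,r):C\subset H\}$, identify the degenerate fibres as the cycles $d\cdot L$ for lines $L$, and bound $\#R(d,r)$ by over-counting via $\#\G(2,r)\cdot\#R(d,2)$. The only cosmetic difference is that the paper first derives an \emph{exact} expression for $\#P(d,r)(\F_q)$ from the incidence count and then extracts the inequalities, whereas you split upper and lower bounds from the outset (over-counting for the upper, restriction to nondegenerate curves for the lower); the resulting constants $128/21<7$ and $256/21<13$ coincide.
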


\begin{proof}
  We fix $\mathbb{F}_{q}$, drop it from the notation, and consider the
  incidence variety
  \begin{equation}\label{eq:incidence_variety_bounds_plane_curves}
    I = \{(C,E)\in P(d,r) \times \mathbb{G}(2,r) \colon C \subseteq E
    \}.
  \end{equation}

  The second projection $\pi_{2} \colon I \longrightarrow
  \mathbb{G}(2,r)$ is surjective, and all its fibers are isomorphic to
  the variety $P(d,2)$ of plane curves of degree $d$. The latter are
  parametrized by the nonzero homogeneous trivariate polynomials of
  degree $d$, and
  $$
  \#P(d,2) = \frac{q^{(d+2)(d+1)/2}-1}{q-1},
  $$
  $$
  \#I = \#P(d,2)\cdot \#G(2,r)= \#P(d,2) \cdot
  \frac{(q^{r+1}-1)(q^{r+1}-q)(q^{r+1}-q^2)}{
    (q^3-1)(q^3-q)(q^3-q^2)}.
  $$
  The fibers of the first projection $\pi_{1} \colon I \longrightarrow
  P(d,r)$ usually are singletons. The only exceptions occur when $C= d
  \cdot L$ equals $d$ times a line $L$.  There are
  $$
  \#\mathbb{G}(1,r)= \frac{(q^{r+1}-1)(q^{r+1}-q)}{(q^{2}-1)(q^{2}-q)}
  $$
  such $d \cdot L$, and their fiber size is
  $$
  \#\pi_{1}^{-1}(d \cdot L)= \frac{q^{r+1}-q^{2}}{q^{3}-q^{2}}.
  $$
  It follows that
  \begin{align}\label{al:PG}
    \#P(d,r)&= \#I - \#\mathbb{G}(1,r) \cdot
    \bigg(\frac{q^{r+1}-q^{2}}{q^{3}-q^{2}}-1\bigg)\nonumber\\
    & = \#\mathbb{G}(1,r)\cdot \bigg(\frac{\#P(d,2)\cdot
      (q^{r+1}-q^2)(q^2-1)(q^2-q)}{(q^3-1)(q^3-q)(q^{3}-q^{2})}-\frac{q^{r+1}
      -q^{3}}{q^{3}-q^{2}}\bigg)\nonumber\\
    & = \# \mathbb{G}(1,r) \cdot
    \frac{(q^{(d+2)(d+1)/2}-1)(q^{r-1}-1)-(q^{r-1}-q)(q^{3}-1)}{(q^{3}-1)(q-1)}
    \nonumber\\
    & = q^{3(r-2)+d(d+3)/2}\frac{(1-q^{-r})(1-q^{-r-1})}{(1-q^{-1})^{2}(1-q^{-2})(1-q^{-3})}\nonumber\\
    & \quad \cdot\big((1-q^{-c})(1-q^{-r+1})-q^{3-c}(1-q^{-r+2})(1-q^{-3})\big),
  \end{align}
  where $c=(d+2)(d+1)/2$. Since $q$, $d\ge 2$, we have $c \ge 6$ and
  hence
  \begin{align*}
    2q & \ge 1+q^{5-c}+q^{2-c}+q^{3-r},\\
    (1-q^{-c})(1-q^{-r+1})-q^{3-c} & \ge (1-q^{-1})^{2}.
  \end{align*}
  Therefore, the last factor in (\ref{al:PG}) is at least
  $(1-q^{-1})^{2}$, which implies
  $$\#P(d,r)  \ge q^{3(r-2)+d(d+3)/2}.$$
  On the other hand, from (\ref{al:PG}) we also deduce that
  \begin{align*}
    \#P(d,r) \le&\
    q^{b_{d,r}}\frac{(1-q^{-c})(1-q^{-r-1})(1-q^{-r})(1-q^{-r+1})}{
      (1-q^{-1})^{2}(1-q^{-2})(1-q^{-3})}\\
    \le&\ q^{b_{d,r}}\frac{1}{ (1-q^{-1})^{2}(1-q^{-2})(1-q^{-3})}\le
    7 q^{b_{d,r}}.
  \end{align*}
  This proves the bounds for $P(d,r)$.
  Concerning $R(d,r)$, we consider, instead of the incidence variety of
  (\ref{eq:incidence_variety_bounds_plane_curves}), the ``restricted''
  incidence variety
  $$
  I_R= \{(C,E)\in R(d,r) \times \mathbb{G}(2,r) \colon C \subseteq E
  \}.
  $$
  Arguing as before and applying
  (\ref{eq:lower_bound_irred_proj_plane_curves}), we obtain
  \begin{align*}
    \#R(d,r)& \leq   \#I =  \#R(d,2)\cdot \#G(2,r)\\
    &\le \#P(d,2) \cdot\frac{q+2}{q^d}\cdot
    \frac{(q^{r+1}-1)(q^{r+1}-q)(q^{r+1}-q^2)}{
      (q^3-1)(q^3-q)(q^3-q^2)}\\
    &= q^{b_{d,r}-d+1}
    \frac{(1-q^{-(d+1)(d+2)/2})(1+2q^{-1})(1-q^{-r-1})(1-q^{-r})(1-q^{-r+1})}{
      (1-q^{-3})(1-q^{-2})(1-q^{-1})^2}\\
    & < q^{b_{d,r}-d+1}\frac{1+2q^{-1}}{(1-q^{-3})
      (1-q^{-2})(1-q^{-1})^2}\ \, \le\ 13q^{b_{d,r}-d+1}.
  \end{align*}
  The lower bound for $\mathcal{C}_{d,r}$ follows from the fact that
  $P(d,r) \subseteq \mathcal{C}_{d,r}$, and the upper bound from
  (\ref{eq:upper-bound-number-rat-points}) and Theorem
  \ref{th:upper-bound-deg-chow-var}.
\qed\end{proof}

We find the following bounds on the probability that a random curve in
$\mathcal{C}_{d,r}(\fq)$ is $\fq$-reducible.

%
%
\begin{theorem}\label{coro:upper-bound-prob-red-curves}
  \begin{enumerate}
  \item[$(1)$]\label{th:u-b-p-r-c-1} If $r\ge 3$ and $d\ge 4r-7 $, then
    $$
    \frac{(1-13q^{2-d})}{2c_{d,r}}q^{-(d-2r+3)}\le \frac{
      \#\mathcal{R}_{d,r}^{(q)}(\fq)}{ \#\mathcal{C}_{d,r}(\fq)}\le
    c_{d,r}\, q^{-(d-2r+3)},
    $$
    with $c_{d,r}$ as in (\ref{eq:cdr}). If $d \geq 7$, then also
    $$
    \frac{1}{4c_{d,r}}q^{-(d-2r+3)} \le \frac{
      \#\mathcal{R}_{d,r}^{(q)}(\fq)}{ \#\mathcal{C}_{d,r}(\fq)}\le
    c_{d,r}\, q^{-(d-2r+3)}.
    $$
  \item[$(2)$]\label{th:u-b-p-r-c-2} If $r \geq 3$ and $d = 4r-8$, then
    $$
    \frac{1}{2\,d!\, c_{r,d}}  q^{-r+2} \leq \frac{\#
      \mathcal{R}_{d,r}^{(q)}
      (\mathbb{F}_{q})}{\#\mathcal{C}_{d,r}(\mathbb{F}_{q})} \leq c_{d,r}
    q^{-r+2}.
    $$
  \end{enumerate}
\end{theorem}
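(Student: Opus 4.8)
The plan is to bound numerator and denominator of the ratio separately, so that the upper bounds follow by a direct division while the lower bounds require exhibiting an explicit family of $\fq$-reducible cycles of the correct cardinality. For the upper bound in both cases I would simply combine Theorem \ref{th:upper-bound-red-curves}, which gives $\#\mathcal{R}_{d,r}^{(q)}(\fq)\le c_{d,r}q^{b_{d,r}-(d-2r+3)}$ (respectively $\le c_{d,r}q^{b_{d,r}-r+2}$ when $d=4r-8$), with the lower bound $q^{b_{d,r}}\le\#\mathcal{C}_{d,r}(\fq)$ from Lemma \ref{lemma:lower-bound-number-plane-curves}. Dividing yields $\#\mathcal{R}_{d,r}^{(q)}(\fq)/\#\mathcal{C}_{d,r}(\fq)\le c_{d,r}q^{-(d-2r+3)}$ (respectively $\le c_{d,r}q^{-r+2}$), which is exactly the upper bound in each case. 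For every lower bound I would use the upper estimate $\#\mathcal{C}_{d,r}(\fq)<2c_{d,r}q^{b_{d,r}}$ of Lemma \ref{lemma:lower-bound-number-plane-curves} in the denominator, and bound $\#\mathcal{R}_{d,r}^{(q)}(\fq)$ from below by the number of $\fq$-rational cycles lying in the dominant reducible component identified in Theorem \ref{th:dimension-variety-red-curves}.

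In case $(1)$, where $d\ge 4r-7$, that dominant component consists of cycles $L+C'$ with $L$ an $\fq$-line and $C'$ an $\fq$-irreducible plane curve of degree $d-1$ (here $d-1\ge 4r-8$, so by Fact \ref{fact:dimension-chow-var} plane curves dominate $\mathcal{C}_{d-1,r}$). Since $\deg L=1\ne d-1=\deg C'$ and $C'$ is $\fq$-irreducible, each $L+C'$ is $\fq$-reducible and its decomposition into $\fq$-irreducible cycles recovers the pair $(L,C')$, so the assignment is injective. Hence $\#\mathcal{R}_{d,r}^{(q)}(\fq)$ is at least $\#\G(1,r)(\fq)$ times the number of $\fq$-irreducible plane curves of degree $d-1$. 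I would estimate the first factor by $\#\G(1,r)(\fq)\ge q^{2(r-1)}$ and the second by $\#P(d-1,r)(\fq)-\#R(d-1,r)(\fq)\ge q^{b_{d-1,r}}(1-13q^{2-d})$, invoking Lemma \ref{lemma:lower-bound-number-plane-curves} at degree $d-1$. Because $2(r-1)+b_{d-1,r}=b_{d,r}-(d-2r+3)$, this gives $\#\mathcal{R}_{d,r}^{(q)}(\fq)\ge q^{b_{d,r}-(d-2r+3)}(1-13q^{2-d})$, and dividing by $2c_{d,r}q^{b_{d,r}}$ produces the first displayed lower bound. The second bound for $d\ge 7$ then follows since $13q^{2-d}\le\tfrac12$ for all $q\ge 2$ in that range, so $1-13q^{2-d}\ge\tfrac12$.

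In case $(2)$, where $d=4r-8$, the dominant reducible component is $dG(1,r)$, so I would instead count cycles that are sums of $d$ distinct $\fq$-lines. Each unordered $d$-subset of $\G(1,r)(\fq)$ gives a distinct multiplicity-free $\fq$-reducible cycle, whence $\#\mathcal{R}_{d,r}^{(q)}(\fq)\ge\binom{\#\G(1,r)(\fq)}{d}$. Using the exact Grassmannian count, which exceeds its leading term $q^{2(r-1)}$ by more than $d$ for $q\ge 2$ and $r\ge 3$, each of the $d$ falling factors in the binomial coefficient is at least $q^{2(r-1)}$, so $\binom{\#\G(1,r)(\fq)}{d}\ge q^{2d(r-1)}/d!=q^{b_{d,r}-r+2}/d!$; dividing by $2c_{d,r}q^{b_{d,r}}$ yields $\#\mathcal{R}_{d,r}^{(q)}(\fq)/\#\mathcal{C}_{d,r}(\fq)\ge(2\,d!\,c_{d,r})^{-1}q^{-r+2}$.

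The main obstacle is the lower bound on $\#\mathcal{R}_{d,r}^{(q)}(\fq)$: one must select the correct dominant family in each regime (the pairs $L+C'$ when $d\ge 4r-7$, sums of lines when $d=4r-8$), verify that parametrizing cycles by their distinct irreducible factors is injective so as to avoid overcounting, and then carry the counting through with the stated explicit constants — in particular transporting the plane-curve irreducibility estimate, and hence the factor $1-13q^{2-d}$, to degree $d-1$, and controlling the falling factorial so that the binomial count contributes exactly $1/d!$. The upper bounds, by contrast, are a routine division of estimates already in hand.
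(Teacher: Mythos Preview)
Your proposal is correct and follows essentially the same approach as the paper: the upper bounds come from dividing Theorem~\ref{th:upper-bound-red-curves} by the lower bound of Lemma~\ref{lemma:lower-bound-number-plane-curves}, and the lower bounds come from counting the dominant reducible family (pairs $L+C'$ with $C'$ an $\fq$-irreducible plane curve of degree $d-1$ in case~(1), sums of $d$ $\fq$-lines in case~(2)) and dividing by the upper bound $2c_{d,r}q^{b_{d,r}}$. The only cosmetic difference is that in case~(2) the paper bounds $\#dG(1,r)(\fq)$ via the map $G(1,r)^d\to dG(1,r)$ with fibers of size at most $d!$, obtaining $(\#G(1,r)(\fq))^d/d!\ge q^{2d(r-1)}/d!$ directly, whereas you go through the binomial count of distinct lines; both yield the same bound.
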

\begin{proof}
  Combining Lemma \ref{lemma:lower-bound-number-plane-curves} with
  Theorem \ref{th:upper-bound-red-curves}, the upper bounds follow
  immediately. It remains to prove the lower bounds.

  (1) Recall the morphism
  $\mu_{1,d,r}:\mathcal{C}_{1,r}\times\mathcal{C}_{d-1,r} \to
  \mathcal{C}_{d,r}$ induced by the multiplication mapping. Theorem
  \ref{th:dimension-variety-red-curves} asserts that
  $\mathrm{codim}_{\mathcal{R}_{d-1,r}} \mathcal{C}_{d-1,r}>0$, which
  implies that $\mathcal{C}_{d-1,r}\setminus\mathcal{R}_{d-1,r}$ is a
  nonempty Zariski open subset of $\mathcal{C}_{d-1,r}$ of dimension
  $b_{d-1,r}=\dim\mathcal{C}_{d-1,r.}$ Furthermore, the restriction of
  $\mu_{1,d,r}$ to $\mathcal{C}_{1,r}
  \times(\mathcal{C}_{d-1,r}\setminus \mathcal{R}_{d-1,r})$ is
  injective. Using Lemma \ref{lemma:lower-bound-number-plane-curves},
  it follows that
  \begin{align*}
    \#\mathcal{R}_{d,r}^{(q)}(\fq) & \ge \#\mathcal{C}_{1,r}(\fq)
    \cdot\#(\mathcal{C}_{d-1,r}(\fq)\setminus\mathcal{R}_{d-1,r}^{(q)}(\fq))\\
    & \ge \#\mathbb{G}(1,r)(\fq) \cdot\#({P}(d-1,r)(\fq)\setminus
    R_{d-1,r} (\fq))\\
    & > q^{2(r-1)} \cdot q^{3(r-2)+(d-1)(d+2)/2} \cdot (1- 13 q^{2-d}).
  \end{align*}
  The bound $\#\mathcal{C}_{d,r}(\fq)\le 2c_{d,r}q^{b_{d,r}}$ implies
  the inequality
  $$\frac{\#\mathcal{R}_{d,r}^{(q)}(\fq)}{\#\mathcal{C}_{d,r}(\fq)}
  \ge\frac{(1-13q^{2-d})}{2c_{d,r}}q^{-(d-2r+3)}.$$
  The last claim follows since $1-13 q^{2-d} \geq 1/2$ for $d \geq 7$.

  (2) We
  consider $dG(1,r)(\mathbb{F}_{q}) \subseteq \mathcal{R}_{d,r}^{(q)}
  (\mathbb{F}_{q})$ and the morphism from $G(1,r)^{d}$ to $dG(1,r)$
  which takes a sequence of lines to their sum. Each fiber of this
  morphism has size at most $d!$. This implies that
  \begin{align*}
    \#\mathcal{R}_{d,r}^{(q)}(\mathbb{F}_{q}) \geq \#
    dG(1,r)(\mathbb{F}_{q}) > \frac{(\#G(1,r)(\mathbb{F}_{q}))^{d}}{d!}
    \geq \frac{q^{2d(r-1)}}{d!}.
  \end{align*}

  Combined with Lemma \ref{lemma:lower-bound-number-plane-curves}, this
  yields
  $$
  \frac{\#\mathcal{R}_{d,r}^{(q)}(\mathbb{F}_{q})}{\#\mathcal{C}_{d,r}(\mathbb{F}_{q})}
  \geq \frac{q^{2d(r-1)-b_{d,r}}}{2 ~ d! c_{d,r}}.
  $$
  Furthermore,
  $$
  2d(r-1)-b_{d,r}= 8(r-2)(r-1)- (3(r-2)+2(r-2)(4r-5))=-r+2.
  $$
This finishes the proof of the theorem.\qed\end{proof}

  An immediate consequence of this theorem is that the probability that an
  $\fq$-curve in $\Pp^r$ of degree $d$ is $\fq$-reducible tends to zero
  as $q$ grows for fixed $r\ge 3$ and $d\ge 4r-8$.  Furthermore, our
  bounds show that such a convergence has the same rate as
  $q^{-(d-2r+3)}$. In this sense, our bounds are a suitable
  generalization of the corresponding bounds for $r=2$, as stated in
  (\ref{eq:lower_bound_irred_proj_plane_curves}). We made no attempt to
  optimize the ``constants'' independent of $q$.

  In \cite{EiHa92} it is proved that for $d \geq 4r-8$, the planar curves in
  $P(d,r)$ form the only component of $\mathcal{C}_{d,r}$ with maximal
  dimension. In this sense, ``most'' curves are planar. We can quantify
  this as follows.
  \begin{corollary}\label{cor:EiHa}
    For $r \geq 3$ and $d \geq 4r - 8$,
    \begin{equation*}
      \frac{\#(\mathcal{C}_{d,r} \backslash
        P(d,r))(\mathbb{F}_{q})}{\#\mathcal{C}_{d,r}(\mathbb{F}_{q})} \leq \frac{2c_{d,r}}{q}.
    \end{equation*}
  \end{corollary}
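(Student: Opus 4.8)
The plan is to exploit the fact that, beyond the threshold, the plane curves form the single largest piece of the Chow variety, so that everything outside $P(d,r)$ lives in codimension at least one and therefore contributes only a $1/q$ fraction of the $\fq$-points. First I would invoke Fact \ref{fact:dimension-chow-var}: since $r\ge 3$ and $d\ge 4r-8$, we are never in the exceptional cases (\ref{eq:d,r}) (both of which require $d<4r-8$), so $P(d,r)$ is the \emph{unique} irreducible component of $\mathcal{C}_{d,r}$ of maximal dimension $b_{d,r}=3(r-2)+d(d+3)/2$. Consequently every other irreducible component of $\mathcal{C}_{d,r}$ has dimension at most $b_{d,r}-1$.

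Next I would set $W$ to be the union of all irreducible components of $\mathcal{C}_{d,r}$ different from $P(d,r)$. Any $\fq$-point of $\mathcal{C}_{d,r}$ not lying on $P(d,r)$ must lie on one of these remaining components, so $(\mathcal{C}_{d,r}\setminus P(d,r))(\fq)\subseteq W(\fq)$. By construction $\dim W\le b_{d,r}-1$, and since $W$ collects a subfamily of the irreducible components of $\mathcal{C}_{d,r}$ and the degree is additive over components, $\deg W\le\deg\mathcal{C}_{d,r}\le c_{d,r}$ by Theorem \ref{th:upper-bound-deg-chow-var}.

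Then I would apply the elementary point bound (\ref{eq:upper-bound-number-rat-points}) to $W$, obtaining $\#(\mathcal{C}_{d,r}\setminus P(d,r))(\fq)\le\#W(\fq)\le 2\deg W\cdot q^{\dim W}\le 2c_{d,r}\,q^{b_{d,r}-1}$. Dividing by the lower bound $\#\mathcal{C}_{d,r}(\fq)\ge q^{b_{d,r}}$ supplied by Lemma \ref{lemma:lower-bound-number-plane-curves} yields precisely $2c_{d,r}/q$, as claimed.

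The argument is essentially immediate once the right objects are in place; the one point that needs care—and which I would flag as the crux—is the \emph{strict} maximality of $P(d,r)$. One must confirm that the exceptional pairs $(d,r)\in\{(2,3),(3,3)\}$ of (\ref{eq:d,r}), where $P(d,r)$ and $dG(1,r)$ share the top dimension, are genuinely excluded by the hypothesis $d\ge 4r-8$, and that even at the boundary $d=4r-8$ the gap $\dim P(d,r)-\dim dG(1,r)=r-2\ge 1$ is positive. Were either to fail, $W$ would reach dimension $b_{d,r}$ and the decisive factor of $q$ would be lost.
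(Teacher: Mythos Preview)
Your argument is correct and essentially identical to the paper's: define the union $W$ (the paper calls it $N$) of the components of $\mathcal{C}_{d,r}$ other than $P(d,r)$, use Fact~\ref{fact:dimension-chow-var} to get $\dim W\le b_{d,r}-1$, bound $\deg W\le c_{d,r}$ via Theorem~\ref{th:upper-bound-deg-chow-var}, apply (\ref{eq:upper-bound-number-rat-points}), and divide by the lower bound from Lemma~\ref{lemma:lower-bound-number-plane-curves}. Your explicit verification that the hypothesis $d\ge 4r-8$ rules out the exceptional pairs (\ref{eq:d,r}) and that the gap $r-2\ge 1$ persists at the boundary is a welcome addition that the paper leaves implicit.
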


  \begin{proof}
    We denote as $N$ the union of all components of $\mathcal{C}_{d,r}$
    not contained in $P(d,r)$. Thus all non-planar curves are in $N$. By
    Fact \ref{fact:dimension-chow-var}, it follows that $\dim N < b_{d,r}$, and
    since each component of $N$ is a component of $\mathcal{C}_{d,r}$,
    we have $\deg N \leq \deg \mathcal{C}_{d,r} = c_{d,r}$. Now
    (\ref{eq:upper-bound-number-rat-points}) implies that
    $\#N(\mathbb{F}_{q}) \leq 2c_{d,r} q^{b_{d,r}-1}$, and using
    Lemma \ref{lemma:lower-bound-number-plane-curves}
    $$
    \frac{\#((\mathcal{C}_{d,r} \setminus
      P(d,r))(\mathbb{F}_{q}))}{\#\mathcal{C}_{d,r}} \leq
    \frac{\#(N(\mathbb{F}_{q}))}{q^{b_{d,r}}} \leq \frac{2c_{d,r}}{q}.
    $$
This shows the corollary.\qed\end{proof}
  In particular, for fixed $r\ge 3$ and $d \geq 4r-8$, the probability for
  a random curve to be non-planar tends to $0$ with growing $q$.


  %
  %
  \section{The probability that an $\fq$-curve is absolutely
    reducible}\label{sec:AbsRed}
  An $\fq$-curve can be absolutely reducible for two reasons: either it is
  $\fq$-reducible, as treated above, or \emph{relatively $\fq$-irreducible},
  that is, is $\fq$-irreducible and $\cfq$-reducible. The aim of this section
  is to obtain a bound on the probability for the latter to occur.
  The set of relatively $\fq$-irreducible (or
  exceptional) $\fq$-curves of degree $d$ in $\Pp^r$ is denoted by
  $\mathcal{E}_{d,r}(\fq)$ and the set of irreducible $\fq$-curves of
  degree $d$ in $\Pp^r$ is denoted by $\mathcal{I}_{d,r}(\fq)$.

  \begin{theorem}\label{th:upper-bound-rel-red-curves}
    Let $r\ge 3$ and $d\ge 4r-8 $, and denote by $\ell$ the smallest
    prime divisor of $d$. We have the bounds
    $$  \begin{array}{rcccccc}
      q^{2d(r-1)}(1-4q^{2(1-d)(r-1)}) \!\!\!\! &\le \#\mathcal{E}_{d,r}(\fq) \le 
      2D_{\ell,d,r}\,q^{2d(r-1)}
      &\text{ for } d/\ell\le 4r-7,  \\[1ex]
      q^{\ell b_{d/\ell,r}}(1- 16q^{\ell-d}) \!\!\!\! &\le
      \#\mathcal{E}_{d,r}(\fq) \le  3D_{\ell,d,r}\,q^{\ell b_{d/\ell,r}} &
      \text{ for } d/\ell\ge 4r-8,
    \end{array}$$
    with $D_{\ell,d,r}\defineAs
    (ed/\ell)^{r(r+1)(d^2/\ell^2+1)+4rg_{d/\ell,r}}$, $b_{d,r}\defineAs
    \dim\mathcal{C}_{d,r}$ and $g_{d/\ell,r}$ as in
    (\ref{eq:dim-homogeneous-piece-grassmannian}).
  \end{theorem}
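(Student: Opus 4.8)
The plan is to describe each relatively $\fq$-irreducible curve through the Galois action on its absolutely irreducible components, and then to count, divisor by divisor, the resulting Frobenius orbits. Let $C\in\mathcal E_{d,r}(\fq)$ and write its decomposition over $\cfq$ into distinct absolutely irreducible curves as $C=\sum_i C_i$. Since $C$ is $\fq$-rational as a cycle, the Frobenius $\sigma\colon x\mapsto x^q$ permutes the $C_i$, and since $C$ is $\fq$-irreducible this action is transitive. Hence the components form a single orbit of some size $t\ge 2$, they all have the same degree $d/t$ (so $t\mid d$), and a representative $C_1$ is an absolutely irreducible curve whose smallest field of definition is $\F_{q^t}$; conversely every such orbit yields, via Theorem \ref{th:correspondence-chow-points-and-curves}, one point of $\mathcal E_{d,r}(\fq)$. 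This gives a bijection between $\mathcal E_{d,r}(\fq)$ and the disjoint union over $t\mid d$, $t\ge2$, of the size-$t$ Frobenius orbits of absolutely irreducible $\F_{q^t}$-curves of degree $d/t$ with smallest field of definition exactly $\F_{q^t}$, and I would count each orbit as $\tfrac{1}{t}$ of the corresponding $\F_{q^t}$-points of $\widehat{\mathcal C}_{d/t,r}$, reducing everything to point counts on $\widehat{\mathcal C}_{d/t,r}$ over the extensions $\F_{q^t}$.

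For the upper bound I would sum $\#\{\text{orbits of size }t\}\le\#\widehat{\mathcal C}_{d/t,r}(\F_{q^t})$ over all $t\mid d$, $t\ge 2$, and apply (\ref{eq:upper-bound-number-rat-points}) with the degree bound of Theorem \ref{th:upper-bound-deg-chow-var} (via Proposition \ref{prop:degree-chow-var-irred-curves}) to get $\#\widehat{\mathcal C}_{d/t,r}(\F_{q^t})\le 2\deg(\widehat{\mathcal C}_{d/t,r})\,(q^t)^{\dim\widehat{\mathcal C}_{d/t,r}}$. Using $\dim\widehat{\mathcal C}_{d/t,r}\le\dim\mathcal C_{d/t,r}=b_{d/t,r}$ and Fact \ref{fact:dimension-chow-var}, the exponent $t\,b_{d/t,r}$ equals $2d(r-1)$ when $d/t<\threshold$ and equals $3t(r-2)+d^2/(2t)+3d/2$ when $d/t\ge\threshold$. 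The latter is a convex function of $t$ whose minimum is attained beyond the largest admissible divisor, so on the relevant range it decreases; hence the largest exponent over all $t\ge2$ is attained at the smallest divisor $t=\ell$, giving $\ell\,b_{d/\ell,r}$ when $d/\ell\ge\threshold$ and $2d(r-1)$ otherwise. At $t=\ell$ the degree factor is $\deg\widehat{\mathcal C}_{d/\ell,r}\le D_{\ell,d,r}$ (this is essentially $c_{d/\ell,r}$ of Theorem \ref{th:upper-bound-deg-chow-var}), and the finitely many remaining divisors, with the factor $2$, are absorbed into the stated $2D_{\ell,d,r}$ and $3D_{\ell,d,r}$.

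For the lower bound I would exhibit many orbits of the dominant type directly. When $d/\ell\ge\threshold$ I take $\ell$ Frobenius-conjugate planar curves of degree $d/\ell$: Lemma \ref{lemma:lower-bound-number-plane-curves} applied over $\F_{q^\ell}$ gives at least $(q^\ell)^{b_{d/\ell,r}}=q^{\ell b_{d/\ell,r}}$ curves in $P(d/\ell,r)(\F_{q^\ell})$ and bounds the reducible ones by $13\,(q^\ell)^{b_{d/\ell,r}-d/\ell+1}$; removing these together with the few curves defined over the proper subfield $\fq$ (the only one, as $\ell$ is prime) leaves a proportion $1-16q^{\ell-d}$ of absolutely irreducible components with smallest field exactly $\F_{q^\ell}$, which assemble into size-$\ell$ orbits. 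When $d/\ell\le 4r-7$ I instead use sums of $d$ conjugate lines: there are $\#\G(1,r)(\F_{q^d})\approx q^{2d(r-1)}$ lines over $\F_{q^d}$, and subtracting the $O(q^{2(d/\ell)(r-1)})$ defined over a proper subfield produces the factor $1-4q^{2(1-d)(r-1)}$. In both cases one passes from conjugate components to exceptional curves by dividing by the orbit size.

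The principal difficulty is the two-sided control of which divisor dominates. For the upper bound the convexity estimate isolating $t=\ell$, and the check that at the threshold $d/\ell=\threshold$ the planar contribution $\ell\,b_{d/\ell,r}$ exceeds $2d(r-1)$ (by exactly $\ell(r-2)$), must be done carefully, as this is precisely where the two regimes meet. For the lower bound the delicate part is the orbit bookkeeping: one must subtract both the geometrically reducible planar components and those definable over $\fq$ without degrading the leading power of $q$, and confirm that the surviving configurations are genuinely $\fq$-irreducible rather than merely $\fq$-rational. The remaining ingredients — the point-count inequality (\ref{eq:upper-bound-number-rat-points}), the degree bound of Theorem \ref{th:upper-bound-deg-chow-var}, and the dimension formula of Fact \ref{fact:dimension-chow-var} — are already available.
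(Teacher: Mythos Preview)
Your proposal is correct and follows essentially the same route as the paper: both decompose $\mathcal{E}_{d,r}(\fq)$ via Frobenius orbits indexed by divisors $k\mid d$, bound the upper sum by $\sum_{k}\#\widehat{\mathcal C}_{d/k,r}(\fqk)$ using Proposition~\ref{prop:degree-chow-var-irred-curves} and Fact~\ref{fact:dimension-chow-var}, show the $k=\ell$ term dominates, and obtain the lower bounds from lines over $\F_{q^d}$ (when $d/\ell\le 4r-7$) or planar curves over $\F_{q^\ell}$ (when $d/\ell\ge 4r-8$). The only notable deviation is your explicit division by the orbit size, which the paper omits in its identity $\#\mathcal E_{k,d,r}=\#\mathcal I_{d/k,r}^+(\fqk\!:\!\fq)$; keeping that division would weaken your lower bounds by a factor of at most $d$ relative to the stated constants, so to match the theorem exactly you should either drop the division as the paper does or verify that the paper's equality holds as written.
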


  \begin{proof}
    We follow the lines of the proof of \cite[Theorem 5.1]{Gathen08}.
    Let $A_0\klk A_r,B_0\klk B_r$ be new indeterminates, let $\boldsymbol{A}\defineAs
    (A_0\klk A_r)$ and $\boldsymbol{B}\defineAs (B_0\klk B_r)$. First we observe
    that, if a bihomogeneous polynomial $F\in\fq[\boldsymbol{A},\boldsymbol{B}]$ of bidegree
    $(d,d)$ is relatively $\fq$-irreducible, then it is reducible in
    $\fqk$ for $k$ dividing $d$. Therefore, let $k$ be a divisor of $d$
    and let $\mathcal{G}_k\defineAs \hbox{Gal}(\fqk:\fq)$ be the Galois
    group of $\fqk$ over $\fq$. For $\sigma$ in $\mathcal{G}_k$ and
    $[F]$ in $\mathcal{C}_{d/k,r}(\fqk)$, the application of $\sigma$ to
    the coordinates of $[F]$ yields a point $[F^\sigma]$ in
    $\Pp\V_{d/k,r}$. Moreover, we have the following claim:
    \begin{claim} $[F^\sigma]$ belongs to $\mathcal{C}_{d/k,r}(\fqk)$,
      i.e., there is an $\fqk$-cycle of dimension 1 and degree $d/k$ of
      $\Pp^r$ that corresponds to $[F^\sigma]$.\end{claim}
    \begin{poc} Any morphism in $\mathcal{G}_k$ can be extended (not
      uniquely) to a morphism $\widetilde{\sigma}$ in
      $\hbox{Gal}(\cfq:\fq)$. By Theorem
      \ref{th:correspondence-chow-points-and-curves} we see that the
      support $\mathrm{supp}(F)\subset\Pp^r$ of $[F]$ is an
      $\fqk$-curve. Applying $\widetilde{\sigma}$ to the coordinates of
      the points of $\mathrm{supp}(F)$, by the $\fqk$-definability of
      $\mathrm{supp}(F)$ we deduce the equality
      $\mathrm{supp}(F^\sigma)=\sigma(\mathrm{supp}(F))$.  This shows
      that $\mathrm{supp}(F^\sigma)$ is an $\fqk$-curve in $\Pp^r$,
      which in turn proves that $[F^\sigma]\in\mathcal{C}_{d/k,r}(\fqk)$.
    \end{poc}

    The previous claim shows that the following mapping is well-defined:
    $$
    \begin{array}{crcl}
      \varphi_{k,d}\colon&\mathcal{C}_{d/k,r}(\fqk)&\to&\mathcal{C}_{d ,r}(\fq ) \\
      &[F]&\mapsto&\left[\prod_{\sigma\in \mathcal{G}_k}F^\sigma\right].
    \end{array}
    $$
    %
    The image $\varphi_{k,d}([F])$ of the class of an $\fqk$-irreducible
    polynomial $F$ is $\fq$-reducible if and only if there exists a proper
    divisor $l$ of $k$ such that $[F]$ is
    $\fqll$-definable. Furthermore, if $[F]$ is relatively
    $\fqll$-irreducible, then $\varphi_{k,d}([F])=\varphi_{j,d}([H])$ for
    an appropriate multiple $j$ of $k$ and $[H]$ in
    $\mathcal{I}_{d,r}(\F_{\hskip-0.7mm q^j})$. Thus, if we set for any
    integer $m$
    \begin{align*}
      \mathcal{I}_{m,r}^+(\fqk:\fq) &\defineAs
      \mathcal{I}_{m,r}(\fqk)\setminus
      \big(\mathcal{E}_{m,r}(\fqk)\cup\bigcup_{s>1,s|k}
      \mathcal{I}_{m,r}(\F_{\hskip-0.7mm q^{k/s}})\big),
      \\
      \mathcal{E}_{k,d,r}&\defineAs
      \varphi_{k,d}(\mathcal{I}_{d/k,r}^+(\fqk:\fq)),
    \end{align*}
    then we have the equality
    $$\mathcal{E}_{d,r}(\fq)=\bigcup_{k>1,k|d}\mathcal{E}_{k,d,r}.$$

    In order to obtain bounds on the cardinality of
    $\mathcal{E}_{d,r}(\fq)$, we observe that, for any divisor $k$ of
    $d$ with $k>1$, we have
    \begin{displaymath}
      \#\mathcal{E}_{k,d,r}= \#\mathcal{I}_{d/k,r}^+(\fqk:\fq).
    \end{displaymath}
    Therefore,
    \begin{equation}\label{eq:aux-upper-bound-for-rel-reds}
      \#\mathcal{I}_{d/l,r}^+(\fqll)\le \#\mathcal{E}_{d,r}(\fq)
      \le\sum_{k>1,k|d}\#\mathcal{I}_{d/k,r}^+(\fqk) \le\sum_{k>1,
        k|d}\#\widehat{\mathcal{C}}_{d/k,r}(\fqk)
    \end{equation}
    for any divisor $l>1$ of $d$.

    %
    %

    The case $d$ prime follows directly from this expression, since the
    sum in the right--hand side consists of only one term, namely
    $\#\mathcal{E}_{d,r}(\fq)=\#\mathcal{I}_{1,r}^+(\fqd)$.
    Furthermore,
    %
    %
    %
    \begin{eqnarray*}\#\mathcal{I}_{1,r}^+(\fqd:\fq)=
      \#\left(\mathcal{I}_{1,r}(\fqd)\setminus
        \mathcal{I}_{1,r}(\fq)\right)&=&\#\left(\mathbb{G}_{1,r}(\fqd)\setminus
        \mathbb{G}_{1,r}(\fq)\right)\\
      &=&q^{2d(r-1)}-\frac{(q^{r+1}-1)(q^{r+1}-q)}{(q^2-1)(q^2-q)}.
    \end{eqnarray*}
    Hence, for $d$ prime we have
    \begin{equation}
      \label{eq:bound-prob-abs-red-curves-case-prime}
      q^{2d(r-1)}(1-4q^{2(1-d)(r-1)})\le\#\mathcal{E}_{d,r}(\fq)\le
      q^{2d(r-1)}.
    \end{equation}

    Now, assume that $d$ is not prime, and let $\ell$ denote the
    smallest prime divisor of $d$. Suppose that $d/\ell\le 4r-7$. In
    this case, from Fact \ref{fact:dimension-chow-var} we have
    $b_{d/k,r}=2d(r-1)/k$ for every divisor $k$ of $d$. As a
    consequence, if we denote
    $$D_{k,d,r}\defineAs (ed/k)^{r(r+1)(d^2/k^2+1)+4rg_{d/k,r}},$$
    from (\ref{eq:aux-upper-bound-for-rel-reds}) and Proposition
    \ref{prop:degree-chow-var-irred-curves} we see that

    $$
    \#\mathcal{E}_{d,r}(\fq)\le
    \displaystyle \sum_{k>1,k|d}2D_{k,d,r}\,q^{kb_{d/k,r}} \le
    D_{\ell,d,r}\,q^{2d(r-1)}
    \sum_{k>1,k|d}2\frac{D_{k,d,r}}{D_{\ell,d,r}}.
  $$
  For a nontrivial divisor $k>\ell$ of $d$, we have
  \begin{gather}
    \begin{aligned}
      \frac{D_{k,d,r}}{D_{\ell,d,r}}
      \le
      \left(\frac{ed}{\ell}\right)^{(d^{2}/k^{2}-d^{2}/\ell^{2})r(r+1)}
      \le \left(\frac{ed}{\ell}\right)^{-2r(r+1)}\le \frac{1}{2d}.
    \end{aligned}
    \label{eq:auxiliar-1-prob-rel-red-curves}
  \end{gather}
  We conclude that, for $d/\ell\le 4r-7$, the following upper bound
  holds:
  $$
  \#\mathcal{E}_{d,r}(\fq)\le 2D_{\ell,d,r}\,q^{2d(r-1)}.$$
  In order to determine a ``matching'' lower bound, arguing as above
  we obtain
  $$\#\mathcal{E}_{d,r}(\fq)\ge \#\mathcal{E}_{d,d,r}=\#\mathcal{I}_{1,r}^+(\fqd:\fq)
  \ge q^{2d(r-1)}-4q^{2(r-1)}.$$
  Summarizing, we have
  \begin{equation}\label{eq:bound-prob-abs-red-curves-case-2}
    q^{2d(r-1)}(1-4q^{2(1-d)(r-1)})\le
    \#\mathcal{E}_{d,r}(\fq)\le 2D_{\ell,d,r}\,q^{2d(r-1)}.
  \end{equation}

  Finally, assume that $d/\ell\ge 4r-8$. Then Fact
  \ref{fact:dimension-chow-var} implies $b_{d/k,r}=3(r-2)+d(d/k+3)/2k$
  for $d/k\ge 4r-8$ and $b_{d/k,r}=2d(r-1)/k$ for $d/k\le 4r-7$.

  \begin{claim}
    For any divisor $k>\ell$ of $d$, we have
    \begin{equation}\label{eq:aux-bound-rel-red-curves}
      kb_{d/k,r}<\ell b_{d/\ell,r}\defineAs 3\ell(r-2)+d(d/\ell+3)/2.
    \end{equation}
  \end{claim}
  \begin{poc}

    First we consider the case ${d}/({4r-8})\ge k$. Then we
    have $kb_{d/k,r}=3k(r-2)+d(d/k+3)/2$. Taking formal derivatives in
    this expression with respect to $k$, we conclude that $k\mapsto
    kb_{d/k,r}$ is a strictly decreasing function of $k$ for
    ${d}/({4r-8})\ge k$. This implies the claim in this case.

    Next assume that $d/k\le 4r-7$. Then we have $kb_{d/k,r}
    =2d(r-1)$. Up to a division by $\ell$, we see that the claim is
    equivalent to the validity of the inequality
    $$2(d/\ell)(r-1)<3(r-2)+(d/\ell)(d/\ell+3)/2.$$
    Then Fact \ref{fact:dimension-chow-var} shows that the last
    inequality holds for $d/\ell\ge 4r-8>1$. This concludes the proof of
    our claim.
  \end{poc}
  From (\ref{eq:aux-upper-bound-for-rel-reds}) and
  (\ref{eq:aux-bound-rel-red-curves}) it follows that
  $$
  \#\mathcal{E}_{d,r}(\fq)\le \sum_{k>1,k|d}2D_{k,d,r}\,q^{kb_{d/k,r}}
  \le 2 D_{\ell,d,r}\,q^{\ell b_{d/\ell,r}}
  \bigg(1+q^{-1}\sum_{k>\ell,k|d} \frac{D_{k,d,r}}{D_{\ell,d,r}}
  \bigg).
  $$
  Applying (\ref{eq:auxiliar-1-prob-rel-red-curves}) 
  %
  %
  we obtain, for $d/\ell\ge 4r-8$,
  \begin{equation}
    \#\mathcal{E}_{d,r}(\fq)
    \le 2D_{\ell,d,r}\,q^{\ell b_{d/\ell,r}} (1+(2q)^{-1}) \le
    3D_{\ell,d,r}\,q^{\ell b_{d/\ell,r}}.
    \label{eq:upper-bound-prob-abs-red-curves-case-3}
  \end{equation}

  Next we obtain a lower bound for this case. We have
  $$\#\mathcal{E}_{d,r}(\fq)\ge \#\mathcal{E}_{\ell,d,r}=
  \#\mathcal{I}_{d/\ell,r}^+(\fql:\fq)=\#\left(\mathcal{I}_{d/\ell,r}(\fql)
    \setminus\mathcal{I}_{d/\ell,r}(\fq)\right),$$
  since $\ell$ is prime and there are no proper intermediate fields
  between $\fq$ and $\fql$. In order to find a lower bound for
  right--hand side above, we observe that
  $$\#\left(\mathcal{I}_{d/\ell,r}(\fql)
    \setminus\mathcal{I}_{d/\ell,r}(\fq)\right)\ge
  \#\left(\left(\mathcal{I}_{d/\ell,r}(\fql)
      \setminus\mathcal{I}_{d/\ell,r}(\fq)\right)\cap
    P(d/\ell,r)(\fql)\right).$$
  According to Lemma \ref{lemma:lower-bound-number-plane-curves}, we
  have
  \begin{eqnarray*}\#\left(\mathcal{I}_{d/\ell,r}(\fql) \cap
      P(d/\ell,r)(\fql)\right)&=&\# P(d/\ell,r)(\fql)-
    \#R(d/\ell,r)(\fql)\\ &\ge&q^{\ell b_{d/\ell,r}}(1- 15q^{\ell-d}).
  \end{eqnarray*}
  On the other hand, Lemma \ref{lemma:lower-bound-number-plane-curves}
  implies
  $$\#\left(\mathcal{I}_{d/\ell,r}(\fq)\cap
    P(d/\ell,r)(\fql)\right)\le \#P(d/\ell,r)(\fq)\le
  8q^{b_{d/\ell,r}}.$$
  As a consequence, it follows that
  \begin{equation}\#\mathcal{E}_{d,r}(\fq)\ge q^{\ell b_{d/\ell,r}}(1-
    15q^{\ell-d}-8q^{(1-\ell)b_{d/\ell,r}}) \ge q^{\ell
      b_{d/\ell,r}}(1-
    16q^{\ell-d}). \label{eq:lower-bound-prob-abs-red-curves-case-3}\end{equation}
  Combining (\ref{eq:upper-bound-prob-abs-red-curves-case-3}) and
  (\ref{eq:lower-bound-prob-abs-red-curves-case-3}), we obtain
  \begin{equation}\label{eq:bound-prob-abs-red-curves-case-3}
    q^{\ell b_{d/\ell,r}}(1-
    16q^{\ell-d})\le \#\mathcal{E}_{d,r}(\fq)\le 3D_{\ell,d,r}\,q^{\ell b_{d/\ell,r}}.
  \end{equation}

  Putting together (\ref{eq:bound-prob-abs-red-curves-case-prime}),
  (\ref{eq:bound-prob-abs-red-curves-case-2}), and
  (\ref{eq:bound-prob-abs-red-curves-case-3}) finishes the proof of
  the theorem.
\qed\end{proof}
Arguing as in the proof of Corollary
\ref{coro:upper-bound-prob-red-curves}, we obtain the following
consequence of Theorem \ref{th:upper-bound-rel-red-curves}, again
with an exact rate of convergence in $q$.
\begin{corollary}
  \label{coro:upper-bound-prob-rel-red-curves}
  With notations and assumptions as in Theorem
  \ref{th:upper-bound-rel-red-curves}, we have
  $$
  \begin{array}{r}
    \displaystyle\frac{(1-4q^{2(1-d)(r-1)})}{2c_{d,r}}\,q^{(2d-3)(r-2)-\frac{d(d-1)}{2}}\le
    \frac{ \#\mathcal{E}_{d,r}(\fq)}{ \#\mathcal{C}_{d,r}(\fq)}\le
    2D_{\ell,d,r}
    \,q^{(2d-3)(r-2)-\frac{d(d-1)}{2}}\\\textit{for}\ d/\ell\le
    4r-7,\\[1ex]
    \displaystyle\frac{(1-16q^{\ell-d})}{2c_{d,r}}\,
    q^{3(\ell-1)(r-2)-d^2(\ell-1)/2\ell}
    \le\frac{ \#\mathcal{E}_{d,r}(\fq)}{ \#\mathcal{C}_{d,r}(\fq)}\le
    3D_{\ell,d,r}
    \,q^{3(\ell-1)(r-2)-d^2(\ell-1)/2\ell}\\\textit{for}\ d/\ell\ge 4r-8,\end{array}$$
  with $D_{\ell,d,r}\defineAs
  (ed/\ell)^{r(r+1)(d^2/\ell^2+1)+4rg_{d/\ell,r}}$,
  $c_{d,r}=(2ed)^{r(r+1)(d^2+1)+4rg_{d,r}}$ and $g_{d/\ell,r}$ as in
  (\ref{eq:dim-homogeneous-piece-grassmannian}).
\end{corollary}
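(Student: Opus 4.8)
The plan is to divide the two-sided estimate on $\#\mathcal{E}_{d,r}(\fq)$ furnished by Theorem \ref{th:upper-bound-rel-red-curves} by the sandwich bound
$$q^{b_{d,r}}\le \#\mathcal{C}_{d,r}(\fq)<2c_{d,r}q^{b_{d,r}}$$
of Lemma \ref{lemma:lower-bound-number-plane-curves}, proceeding exactly as in the proof of Corollary \ref{coro:upper-bound-prob-red-curves}. For the upper bound on the ratio I would divide the upper bound on the numerator by the lower bound $q^{b_{d,r}}$ on the denominator, and for the lower bound on the ratio I would divide the lower bound on the numerator by the upper bound $2c_{d,r}q^{b_{d,r}}$. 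The prefactors $2D_{\ell,d,r}$ and $3D_{\ell,d,r}$, together with $(1-4q^{2(1-d)(r-1)})/2c_{d,r}$ and $(1-16q^{\ell-d})/2c_{d,r}$, are then precisely what these four quotients produce, so the entire content reduces to simplifying the resulting powers of $q$.

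In the range $d/\ell\le 4r-7$ the numerator carries the factor $q^{2d(r-1)}$, so both quotients have exponent $2d(r-1)-b_{d,r}$. Since $d\ge 4r-8$, Fact \ref{fact:dimension-chow-var} gives $b_{d,r}=3(r-2)+d(d+3)/2$, and a short computation yields
$$2d(r-1)-b_{d,r}=(2d-3)(r-2)-\tfrac{d(d-1)}{2},$$
which is the exponent claimed. In the range $d/\ell\ge 4r-8$ the numerator carries $q^{\ell b_{d/\ell,r}}$, so the exponent becomes $\ell b_{d/\ell,r}-b_{d,r}$; here Fact \ref{fact:dimension-chow-var} applies to \emph{both} terms, the second because $d/\ell\ge 4r-8$, and one obtains
$$\ell b_{d/\ell,r}-b_{d,r}=3(\ell-1)(r-2)-\tfrac{d^2(\ell-1)}{2\ell},$$
as stated.

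The only genuine work is this exponent bookkeeping, together with the verification that the constant prefactors line up as indicated; no new geometric input is needed beyond Theorem \ref{th:upper-bound-rel-red-curves} and Lemma \ref{lemma:lower-bound-number-plane-curves}. I expect no real obstacle, the one mild point being to keep track of which side of each two-sided bound is used in the numerator versus the denominator, so that the monotone direction of the inequalities is preserved throughout.
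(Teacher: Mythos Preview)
Your proposal is correct and follows exactly the approach the paper intends: the paper's own ``proof'' is the single sentence that one argues as in Theorem~\ref{coro:upper-bound-prob-red-curves}, i.e., divides the bounds of Theorem~\ref{th:upper-bound-rel-red-curves} by the sandwich $q^{b_{d,r}}\le \#\mathcal{C}_{d,r}(\fq)<2c_{d,r}q^{b_{d,r}}$ from Lemma~\ref{lemma:lower-bound-number-plane-curves}. Your exponent computations $2d(r-1)-b_{d,r}=(2d-3)(r-2)-d(d-1)/2$ and $\ell b_{d/\ell,r}-b_{d,r}=3(\ell-1)(r-2)-d^2(\ell-1)/(2\ell)$ are the only content, and they are correct.
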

%
%
%

\section{The average number of $\fq$-rational points on
  $\fq$-curves}\label{sec:Weil}
The present paper was partially motivated by the following question:
how many rational points does a typical curve have? As a consequence
of the seminal paper of A. Weil \cite{Weil48}, for an absolutely
irreducible $\fq$-curve $C$ of degree $d$ of $\Pp^r$, we have the
estimate (see, e.g., \cite{Schmidt76})
\begin{equation}\label{eq:Weil estimate}
  |\#C(\fq)-(q+1)|\le (d-1)(d-2)q^{{1/2}}+\lambda(d,r),
\end{equation}
where $\lambda(d,r)$ is a constant independent of $q$. From
\cite{CaMa06} it follows that we can take $\lambda(d,r)=6d^2$ if
$q\ge 15d^{13/3}$. Combining these inequalities yields
\begin{equation}\label{eq:Weil-Cafure-Matera estimate}
  |\#C(\fq)-(q+1)|\le d^2q^{{1/2}}
\end{equation}
for any absolutely irreducible $\fq$-curve and $q\ge 15d^{13/3}$.
Recall that Corollaries \ref{coro:upper-bound-prob-red-curves} and
\ref{coro:upper-bound-prob-rel-red-curves} assert that ``almost
all'' curves are absolutely irreducible for large values of $q$. The
set of $\fq$-curves $C$ of $\Pp^r$ of degree $d$ satisfying
(\ref{eq:Weil-Cafure-Matera estimate}) contains the set of
absolutely irreducible curves of $\mathcal{C}_{d,r}(\fq)$. From
these remarks we obtain the following result on the average number
of $\fq$-rational points of the curves in $\mathcal{C}_{d,r}(\fq)$.
\begin{theorem}
  Let notation be as in Theorem \ref{th:upper-bound-rel-red-curves}
  and assume that $q\ge 15d^{13/3}$, $r\ge 3$ and $d>4r-7$. Then the
  expectation of $\#C(\fq)$ for uniformly random $C$ in $\mathcal{C}_{d,r}(\fq)$
  satisfies
  \begin{equation}\label{eq:expectation-rat-points}
    \big|{\mathbb E} [\#C(\fq)]-(q+1)\big|\le d^2q^{1/2}+
    3\,d\, c_{d,r}\,q^{-(d-2r+2)}
  \end{equation}
  with $c_{d,r}\defineAs \left(2ed \right)^{r(r+1)(d^2+1)+4rg_{d,r}}$.
  Moreover, the probability distribution is concentrated around the
  expectation, namely
  \begin{equation}\label{eq:proba-concentration}
    \Pr\big[|\#C(\fq)-(q+1)|\le d^2q^{1/2}\big]\ge
    1-2\,c_{d,r}\,q^{-(d-2r+3)}.
  \end{equation}
  The latter bound tends to $1$ as $q$ tends to infinity.
\end{theorem}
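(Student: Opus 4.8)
The plan is to split the curves in $\mathcal{C}_{d,r}(\fq)$ into those that are absolutely irreducible, where the Weil estimate (\ref{eq:Weil-Cafure-Matera estimate}) applies, and the remaining ones, which form a negligible fraction and can be treated with the crude point count (\ref{eq:upper-bound-number-rat-points}). Concretely, I would write $\mathcal{C}_{d,r}(\fq)=\mathcal{A}\sqcup\mathcal{B}$, where $\mathcal{A}$ denotes the set of absolutely irreducible $\fq$-curves and $\mathcal{B}=\mathcal{R}_{d,r}^{(q)}(\fq)\cup\mathcal{E}_{d,r}(\fq)$ gathers the $\fq$-reducible and the relatively $\fq$-irreducible ones.

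The first and central step is to show that $\mathcal{B}$ is rare, namely $\#\mathcal{B}/\#\mathcal{C}_{d,r}(\fq)\le 2c_{d,r}\,q^{-(d-2r+3)}$. For the $\fq$-reducible part this is exactly Theorem \ref{coro:upper-bound-prob-red-curves}$(1)$, which gives $\#\mathcal{R}_{d,r}^{(q)}(\fq)/\#\mathcal{C}_{d,r}(\fq)\le c_{d,r}\,q^{-(d-2r+3)}$ for $d\ge 4r-7$. For the relatively irreducible part I would invoke Corollary \ref{coro:upper-bound-prob-rel-red-curves} and argue that its bound is dominated by $c_{d,r}\,q^{-(d-2r+3)}$: in each of the two regimes $d/\ell\le 4r-7$ and $d/\ell\ge 4r-8$ one checks that the exponent of $q$ occurring there is at most $-(d-2r+3)$ and that the constant satisfies $3D_{\ell,d,r}\le c_{d,r}$. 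The constant comparison is immediate, since $D_{\ell,d,r}$ has both a smaller base $ed/\ell$ and a smaller exponent than $c_{d,r}$; the exponent comparison reduces to two quadratic inequalities in $d$, which I would verify hold throughout the range $d\ge 4r-6$, $\ell\ge 2$ (the slowest decay being $\ell=2$). Summing the two contributions then yields the claimed bound on $\#\mathcal{B}/\#\mathcal{C}_{d,r}(\fq)$.

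Granting this, both assertions follow by elementary bookkeeping. For the concentration bound (\ref{eq:proba-concentration}) I would use that, under $q\ge 15d^{13/3}$, every curve in $\mathcal{A}$ satisfies $|\#C(\fq)-(q+1)|\le d^2q^{1/2}$ by (\ref{eq:Weil-Cafure-Matera estimate}); hence the event in (\ref{eq:proba-concentration}) contains $\mathcal{A}$ and has probability at least $1-\#\mathcal{B}/\#\mathcal{C}_{d,r}(\fq)\ge 1-2c_{d,r}\,q^{-(d-2r+3)}$. For the expectation bound (\ref{eq:expectation-rat-points}) I would write $\mathbb{E}[\#C(\fq)]-(q+1)$ as the average of $\#C(\fq)-(q+1)$ and split over $\mathcal{A}$ and $\mathcal{B}$. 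The $\mathcal{A}$-part contributes at most $d^2q^{1/2}$ by Weil; for the $\mathcal{B}$-part I would use that the support of such a curve is a variety of dimension $1$ and degree at most $d$, so $\#C(\fq)\le d(q+1)$ by (\ref{eq:upper-bound-number-rat-points}), whence $|\#C(\fq)-(q+1)|\le (d-1)(q+1)$. Thus this part contributes at most $(d-1)(q+1)\cdot 2c_{d,r}\,q^{-(d-2r+3)}\le 3d\,c_{d,r}\,q^{-(d-2r+2)}$, using $(q+1)q^{-(d-2r+3)}\le \tfrac{3}{2}q^{-(d-2r+2)}$ for $q\ge 2$. Adding the two parts gives (\ref{eq:expectation-rat-points}).

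The main obstacle is the step $\#\mathcal{E}_{d,r}(\fq)/\#\mathcal{C}_{d,r}(\fq)\le c_{d,r}\,q^{-(d-2r+3)}$, that is, confirming that relatively $\fq$-irreducible curves are at least as rare as $\fq$-reducible ones; this is what makes the single power $q^{-(d-2r+3)}$ govern the whole bad set $\mathcal{B}$. Everything else is routine once the two exponent inequalities are established and the constants are tracked so that the coefficients $2c_{d,r}$ and $3d\,c_{d,r}$ emerge as stated.
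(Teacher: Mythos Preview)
Your proposal is correct and follows essentially the same approach as the paper. Both split $\mathcal{C}_{d,r}(\fq)$ into the absolutely irreducible curves $\mathcal{A}$ and the bad set $\mathcal{B}=\mathcal{R}_{d,r}^{(q)}(\fq)\cup\mathcal{E}_{d,r}(\fq)$, bound $\Pr[\mathcal{B}]$ via Theorem~\ref{coro:upper-bound-prob-red-curves} and Corollary~\ref{coro:upper-bound-prob-rel-red-curves} together with the exponent comparison (your two quadratic inequalities are exactly the paper's auxiliary inequality~(\ref{eq:aux-theorem-average-number-of-points})), apply the Weil bound on $\mathcal{A}$, and use $\#C(\fq)\le d(q+1)$ on $\mathcal{B}$. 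The only cosmetic differences are that the paper writes $\Pr[\mathcal{B}]\le 2\max\{\Pr[\mathcal{R}],\Pr[\mathcal{E}]\}$ where you sum, and that the paper derives (\ref{eq:expectation-rat-points}) via separate upper and lower bounds on $\mathbb{E}[\#C(\fq)]$ whereas you bound $|\mathbb{E}[\#C(\fq)]-(q+1)|$ directly by the triangle inequality; your version is slightly cleaner but equivalent.
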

\begin{proof} First we prove (\ref{eq:proba-concentration}). Let
  $\mathcal{A}_{d,r}(\fq)$ denote the set of absolutely irreducible
  $\fq$-curves. This set is the complement in $\mathcal{C}_{d,r}(\fq)$
  of the union of the set $\mathcal{R}_{d,r}^{(q)}(\fq)$
  of $\fq$-reducible $\fq$-curves plus the set $\mathcal{E}_{d,r}(\fq)$
  of relatively irreducible $\fq$-curves. Hence we have
  $$
  \Pr[\mathcal {A}_{d,r}(\fq)]\ge
  1-2\max\{\Pr[\mathcal{R}_{d,r}^{(q)}(\fq)],\Pr[\mathcal{E}_{d,r}(\fq)]\}.
  $$
  The assumption on $d$ implies
  \begin{equation}
    \label{eq:aux-theorem-average-number-of-points}
    \min\left\{d(d-1)/2-(2d-3)(r-2),d^2(\ell-1)/2\ell-3(\ell-1)(r-2)\right\}\ge
    d-2r+3.
  \end{equation}
  From Theorem \ref{coro:upper-bound-prob-red-curves},
  Corollary \ref{coro:upper-bound-prob-rel-red-curves} and
  (\ref{eq:aux-theorem-average-number-of-points}), it follows that
  \begin{equation}\label{eq:aux_proba_reducible-relirred}
    \max\{\Pr[\mathcal{R}_{d,r}^{(q)}(\fq)],\Pr[\mathcal{E}_{d,r}(\fq)]\}\le
    c_{d,r}\,q^{-(d-2r+3)}.
  \end{equation}
  Finally, (\ref{eq:Weil-Cafure-Matera estimate}) and (\ref{eq:aux_proba_reducible-relirred}) yield
  %
  $$
  \Pr\big[|\#C(\fq)-(q+1)|\le d^2q^{1/2}\big]\ge
  \Pr[\mathcal{A}_{d,r}(\fq)]\ge 1-2\,c_{d,r}\,q^{-(d-2r+3)}.
  $$

  Now we estimate the expectation
  (\ref{eq:expectation-rat-points}). For this purpose we observe that
  (\ref{eq:upper-bound-number-rat-points}) implies $\#C(\fq)\le
  d(q+1)$ for any curve $C\in\mathcal{C}_{d,r}(\fq)$. Combining this
  upper bound with (\ref{eq:aux_proba_reducible-relirred}) we obtain
  \begin{gather}
    \begin{aligned}
      {\mathbb E} [\#C(\fq)]& \le (q+1+d^2q^{1/2})\Pr[\#C(\fq)\le q+1
      +d^2q^{1/2}]\\& \quad\ + d(q+1)\Pr[\#C(\fq)>q+1
      +d^2q^{1/2}]\nonumber
      \\ & \le q+1+d^2q^{1/2}+  d(q+1)\Pr[\mathcal{E}_{d,r}(\fq)\cup
      \mathcal{R}_{d,r}^{(q)}(\fq)]\nonumber \\
      & \le q+ 1 +d^2q^{1/2}+3\,d\,c_{d,r}\,q^{-(d-2r+2)}.
    \end{aligned}
  \end{gather}%
  On the other hand, we have
  \begin{gather}
    \begin{aligned}
      {\mathbb E} [\#C(\fq)]& \ge (q+1-d^2q^{1/2})\Pr[\#C(\fq)\ge q+1
      -d^2q^{1/2}]\nonumber
      \\ & \ge (q+1-d^2q^{1/2})\Pr[ \mathcal{A}_{d,r}(\fq)]\nonumber \\
      & \ge q+ 1 -d^2q^{1/2}-2\,c_{d,r}\,q^{-(d-2r+2)}.
    \end{aligned}
  \end{gather}%
  Combining the upper and the lower bound on ${\mathbb E}
  [\#C(\fq)]$, we deduce (\ref{eq:expectation-rat-points}).
\qed\end{proof}

\noindent{\bf Open question.} Can one similarly determine the
probabilities for other ``rare'' types of curves, say, the ones that
are singular or not complete intersections?

\end{document}






%
\end{document}
